\documentclass[11pt,a4]{amsart}

\topmargin=0truecm \oddsidemargin=1truecm \evensidemargin=1truecm
\textheight=22cm
\textwidth=16cm

\pagestyle{plain}
\usepackage{comment}
\usepackage{mathtools,amsthm,amssymb,graphicx,fancyhdr,mathtools}
\usepackage{color}
\usepackage[top=1in, bottom=1in, left=1in, right=1in]{geometry}
\usepackage{enumerate}
\usepackage[titletoc,title]{appendix}
\usepackage{multirow}

\newtheorem{theorem}{Theorem}[section]

\newtheorem{definition}{Definition}[section]
\newtheorem{lemma}{Lemma}[section]
\newtheorem{corollary}{Corollary}[section]

\newtheorem{remark}{Remark}[section]
   \newtheoremstyle{example}{\topsep}{\topsep}%
     {}
     {}
     {\bfseries}
     {}
     {\newline}
     {\thmname{#1}\thmnumber{ #2}\thmnote{ #3}}

   \theoremstyle{example}

\newcommand{\Prob}{\mathbb{P}}
\newcommand{\Poly}{\mathcal{P}}

\newcommand{\bfP}{\mathbf{P}}
\newcommand{\bfQ}{\mathbf{Q}}
\newcommand{\tbfQ}{\tilde{\mathbf{Q}}}
\newcommand{\bfU}{\mathbf{U}}
\newcommand{\bfV}{\mathbf{V}}
\newcommand{\tbfV}{\tilde{\mathbf{V}}}
\newcommand{\bfL}{\mathbf{L}}
\newcommand{\bfG}{\mathbf{G}}

\newcommand{\hatX}{\widehat{X}}
\newcommand{\hatY}{\widehat{Y}}
\newcommand{\hatP}{\widehat{P}}
\newcommand{\hatQ}{\widehat{Q}}
\newcommand{\hatU}{\widehat{U}}
\newcommand{\hatV}{\widehat{V}}
\newcommand{\hatIp}{\widehat{\Lambda}_{\phi}}
\newcommand{\hatLambda}{\widehat{\Lambda}}
\newcommand{\hatphi}{\varphi^{\widehat{X}}}
\newcommand{\hatrho}{\varphi^{\widehat{Y}}}

\newcommand{\ttE}{\mathtt{E}}
\newcommand{\Co}{\mathtt{C}_0(\R_+)}

\newcommand{\CbE}{\mathtt{C}_b(E)}

\newcommand{\Ci}{\mathtt{C}^{\infty}}
\newcommand{\Bb}{\mathtt{B}_b}
\newcommand{\rmL}{{\rm{L}}}

\newcommand{\Lnu}{{\rm{L}}^2(\frakm)}
\newcommand{\Lg}{{\rm{L}}^2(m)}
\newcommand{\Lm}{{\rm{L}}^2(\mbar)}
\newcommand{\rmd}{\mathrm{d}}
\newcommand{\E}{\mathbb{E}}

\newcommand{\R}{{\mathbb{R}}}
\newcommand{\Rcal}{{\mathcal{R}}}

\newcommand{\B}{{\mathbf{B}}}

\newcommand{\C}{{\mathbb{C}}}

\newcommand{\N}{\mathcal{N}}

\newcommand{\Ip}{\Lambda_{\phi}}

\newcommand{\Lag}{\mathcal{L}}
\newcommand{\T}{\mathcal{T}}
\newcommand{\Vp}{V_{\psi}}
\newcommand{\Pbar}{\overline{P}}
\newcommand{\mbar}{\overline{m}}
\newcommand{\Qbar}{\overline{Q}}
\newcommand{\Xbar}{\overline{X}}
\newcommand{\Ybar}{\overline{Y}}

\newcommand{\M}{\mathcal{M}}
\newcommand{\Z}{\mathtt{Z}}

\newcommand{\e}{\mathbf{e}}

\newcommand{\ttP}{\breve{P}}
\newcommand{\frakr}{\mathfrak{r}}
\newcommand{\frakb}{\mathfrak{b}}
\newcommand{\frakm}{\mathfrak{m}}
\newcommand{\frakn}{\mathfrak{n}}
\newcommand{\frakc}{\mathfrak{c}}
\newcommand{\frakl}{\mathfrak{l}}

\newcommand{\D}{\mathcal{D}}

\newcommand{\Fcal}{\mathcal{F}}

\newcommand{\wbar}{\breve{w}}

\newcommand{\eqindist}{\,{\buildrel d \over =}\,}
\newcommand{\eqae}{\,{\buildrel a.e. \over =}\,}

\newcommand{\subsetdense}{\subseteq_d}

\pagestyle{plain}
\numberwithin{equation}{section}
\author{P. Patie}\thanks{This work was partially supported by  NSF Grant DMS-1406599 and ARC IAPAS, a fund of the Communaut\'ee francaise de Belgique. The first and third  author are grateful  for the hospitality of  the  LMA at the UPPA, where part of this work was completed.}
\address{School of Operations Research and Information Engineering, Cornell University, Ithaca, NY 14853.}
\email{	pp396@orie.cornell.edu}

\author{M. Savov} \thanks{The second author also acknowledges the support of the project MOCT, which has received funding from the European Union’s
	Horizon 2020 research and innovation programme under the Marie Sklodowska-Curie grant
agreement No 657025.}
\address{Institute of Mathematics and informatics,  Bulgarian academy of sciences, Akad.  Georgi Bonchev street
	Block 8, Sofia 1113.}
\email{mladensavov@math.bas.bg}

\author{Y. Zhao}
\address{School of Operations Research and Information Engineering, Cornell University, Ithaca, NY 14853.}
\email{	yz645@cornell.edu}
\setcounter{tocdepth}{2}

\title{Intertwining, excursion theory and Krein theory of strings for non-self-adjoint Markov semigroups}
\begin{document}
\maketitle

\begin{abstract}
 In this paper, we start by showing that the intertwining relationship  between  two minimal Markov semigroups acting on Hilbert spaces implies that any recurrent extensions, in the sense of It\^o, 
 of these semigroups satisfy the same intertwining identity.  Under mild additional assumptions on the intertwining operator, we prove that the converse also holds.  This connection, which relies on the representation of excursion  quantities as developed by Fitzsimmons and Getoor \cite{Fitzsimmons_06},  enables us to give an interesting probabilistic interpretation of intertwining relationships between  Markov semigroups via excursion theory:  two such recurrent extensions that intertwine share, under an appropriate normalization, the same local time at the boundary point.   Moreover, in the case when one of the (non-self-adjoint) semigroup intertwines with the one of a quasi-diffusion, we obtain  an extension of Krein's theory of strings by  showing that its densely defined spectral measure is absolutely continuous with respect to the measure appearing in the Stieltjes representation of the Laplace exponent of the inverse local time. Finally, we illustrate our results with the class of positive self-similar Markov semigroups and also the reflected generalized Laguerre semigroups. For the latter,  we obtain their spectral decomposition and provide, under some conditions, a perturbed spectral gap estimate for its convergence to equilibrium.
\end{abstract}
\section{Introduction}
The famous problem ``Can we hear the shape of a drum?" raised by Kac \cite{Kac_Drum} in 1966 has attracted much attention in the past decades. The question asks that whether one  can determine a planar region $\Omega\subseteq \R^2$, up to geometric congruence, from the knowledge of all the eigenvalues of the  problem
\begin{eqnarray*}
\frac12 \Delta u+\lambda u &=& 0 \quad \textnormal{ on }\Omega,
\end{eqnarray*}
where $\Delta$ is the Laplacian operator, with Dirichlet or Neumann boundary conditions. In other words, if we consider the triplet $(\Delta,\Omega,(\lambda_n)_{n\geq 0})$ where $(\lambda_n)_{n\geq 0}$ represents the sequence of eigenvalues of $\Delta$ on $\Omega$, then Kac's problem asks if $\Omega$ can be determined by providing $(\lambda_n)_{n\geq 0}$. It was not until 1992 that Gordon, Webb and Wolpert \cite{GWW_1992} answered this question negatively by constructing a counterexample with two non-congruent planar domains $\Omega_1$ and $\Omega_2$ which are \textit{isospectral}, that is, the sequence of eigenvalues of $\Delta$ on these domains coincide, counted with multiplicities. These domains are the first planar instances of non-isometric, isospectral,  compact connected Riemannian manifolds that were  previously enunciated by Sunada \cite{Sunada_1985} in the context of the Laplace Beltrami operator.
An equivalent formulation of Kac's problem can be described as follows. Writing $(P_t^{\Omega_j})_{t\geq 0}$, $j=1,2$, the semigroups generated by $\Delta|_{\Omega_j}$ on $\rmL^2(\Omega_j)$, and assuming that there exists a unitary operator $\Lambda:\rmL^2(\Omega_2)\mapsto \rmL^2(\Omega_1)$ such that
\[ P_t^{\Omega_1} \Lambda f = \Lambda P_t^{\Omega_2}f\]
for all $f\in \rmL^2(\Omega_2)$, then does it follow that $\Omega_1$ and $\Omega_2$ are congruent? This idea was first exploited by B\'erard \cite{Ber_1992,Berard_II} who reconsidered Sunada's isospectral problem by providing an explicit transplantation map, that is an intertwining operator which is an unitary isomorphism, which carries each  eigenspace in $\rmL^2(\Omega_2)$ into the corresponding eigenspace in  $\rmL^2(\Omega_1)$.  In addition, Arendt \cite{Arendt_02} (resp.~ Arendt et al.~\cite{Arendt_2012}) showed that for subdomains of $\R^N$ (resp.~for manifolds), if the intertwining operator is order isomorphic, that is, $\Lambda$ is linear, bijective, and $f\geq 0\:a.e.~\Leftrightarrow \Lambda f\geq 0\: a.e.$, then $\Omega_1$ and $\Omega_2$ are congruent, offering a positive answer to Kac's problem.
Furthermore, Arendt et al.~\cite{Arendt_2014} considered a more general setting by studying isospectrality of the Dirichlet or Neumann  type semigroups associated to elliptic operators, including non-self-adjoint ones, by means of the concept of similarity, which is an intertwining relationship with $\Lambda$ a bounded operator with a bounded inverse  from the Hilbert space $\rmL^2(\Omega_1)$ to $\rmL^2(\Omega_2)$. Note that the similarity relation between their corresponding semigroups is equivalent to the isospectral property in the case of Laplacians, but, in general, a stronger property  for non-self-adjoint operators. On the other hand, for $\Omega_i \subset \R^2$, they also showed that it is impossible to have a similarity transform that simultaneously intertwins Dirichlet and Neumann operators on $\Omega_1$ and $\Omega_2$, and therefore there does not exist a similarity transform that intertwins elliptic operators with Robin boundary conditions.

In this paper, we reconsider these problems from another perspective. More specifically, we consider the intertwining relationship
\begin{equation} \label{eq:intertwin_intro}
P_t \Lambda f = \Lambda Q_tf
\end{equation}
where  $P=(P_t)_{t\geq 0}$ and $Q=(Q_t)_{t\geq 0}$ are two Markov semigroups defined on $\rmL^2(\frakm)=\rmL^2(E,\frakm)$ and $\rmL^2(m)=\rmL^2(E,m)$, respectively, with $(E,\mathcal{E})$ a Lusin state space which contains a point $b\in E$ which is regular for the two semigroups, $m, \frakm$ two measures, and $\Lambda: \rmL^2(m) \mapsto \rmL^2(\frakm)$ is merely a densely defined closed and one-to-one operator.
In other words, compared to Kac's framework, we  are interested in a (weak)  isospectrality  from an analytical  viewpoint rather than a geometric one: while  the state space remains the same we consider different operators acting on this domain that intertwine in a weak sense. We emphasize that the fact that we do not require a similarity relation between the operators may imply that their spectrum differ drastically.

The first issue we investigate is to understand whether in our set up  the intertwining relation is stable under any modification of  the boundary conditions. For instance,  is that  possible that there exists an operator that links simultaneously the Dirichlet and Neumann operators, providing an opposite answer to the one obtained in \cite{Arendt_2014} for identical operators acting on different planar domains? We shall show that indeed if two Dirichlet semigroups intertwin (in the sense given above) then any of their recurrent extensions in the sense of It\^o,  are also linked with the same operator. This includes for instance the case of Neumann boundary condition, but also reflecting type condition with a jump and sticky boundary conditions and a mixture of them. We carry on by providing sufficient conditions for the reverse claims to hold.

 We proceed by studying the following question. Can one provide a probabilistic  interpretation of intertwining relationships between Markov semigroups?  This is a natural and fundamental question as  this type of commutation relationships appears  in various issues in recent studies of stochastic processes, see e.g.~\cite{PS_intertwining,PS_Cauchy_Problem,PS_spectral,Diaconis_90,Fill_2009,PS_intertwining}.   We show that when two Dirichlet semigroups intertwin then any of its recurrent extension share, under an appropriate normalization,  the same  local time at the regular boundary point. Indeed we prove that the law of their inverse local time which is, from the general theory of Markov processes, a subordinator, is characterized by the same Bernstein function.
This has the nice pathwise interpretation that the intertwining Markov processes behave the same at a common regular boundary point, but, of course, have different behavior elsewhere.

Next, we recall that the inverse local time of a quasi-diffusion also plays an important role in Krein's spectral theory of strings, since it contains information about the spectrum of the quasi-diffusion process killed at the boundary. Therefore, the question arises naturally that whether one can, through an intertwining relation with the semigroup of a quasi-diffusion, derive a similar result for non-diffusions. We answer this question positively by showing that if $P$ and $Q$ satisfy relation \eqref{eq:intertwin_intro} with $Q$ being the semigroup of a quasi-diffusion, then the Laplace exponent of the inverse local time of the (non-diffusion) Markov process corresponding to $P$ also admits a Stieltjes representation, and the (densely defined) spectral measure of the killed semigroup of $P$ is absolutely continuous with respect to the measure appearing in this Stieltjes representation. This defines a weaker version of Krein's property, which can be seen as an extension to Krein's theory to non-diffusions.

The rest of this paper is organized as follows. After this current section of introduction and basic setups, we start in Section 2 by stating our main theorem and its three corollaries, which give results on the intertwining of semigroups of recurrent extensions, excursion theory and Krein's theory of strings. We prove these results in Section 3. In Section 4, we provide two classes of semigroups which serve as examples for such intertwining relationship. In particular, we study the classes of positive self-similar semigroups and reflected generalized Laguerre semigroups, and show that these (non-self-adjoint) semigroups intertwine with the Bessel semigroup and (classical) Laguerre semigroup respectively. We also deduce the expression for the Laplace exponents of their inverse local times. For a reflected generalized Laguerre semigroup, we also obtain in Section 4 its spectral expansion under some conditions, and derive its rate of convergence to equilibrium, which follows a perturbed spectral gap estimate.

\subsection{Preliminaries} \label{sec:prelim}
 Let $(E,\mathcal{E})$ be a Lusin state space, with $\Bb(E)$ (resp.~$\Bb^+(E)$) denote the space of bounded real-valued  (resp.~bounded real-valued and non-negative) measurable functions on $E$, and $\CbE$ denote the space of bounded continuous functions on $E$. Let $X=(X_t)_{t\geq 0}$ (resp.~$Y=(Y_t)_{t\geq 0}$) defined on a filtered probability space $(\Omega,\Fcal,(\Fcal_t)_t\geq 0,\Prob)$ be a strong Markov process on $E$, which is assumed to have an infinite lifetime, and let $P=(P_t)_{t\geq 0}$ (resp.~$Q=(Q_t)_{t\geq 0}$) denote its corresponding Borel right semigroup, that is, $P_tf(x)=\E_x[f(X_t)]$ (resp.~$Q_tf(x)=\E_x[f(Y_t)]$) for $f\in \Bb(E)$, where $\E_x$ denote the expectation under measure $\Prob_x(X_0=x)=1$ (resp.~$\Prob_x(Y_0=x)=1$). We also assume that for any $f\in \CbE$ (resp.~$\Bb(E)$) and $x\in E$, the mappings
\begin{equation}\label{eq:stoc_cont}
t\mapsto P_tf(x) \textnormal{ and } t\mapsto Q_tf(x) \textnormal{ are continuous (resp.~Borel)},
\end{equation}
and we recall that condition \eqref{eq:stoc_cont} also means that $P_t$ and $Q_t$ are \textit{stochastically continuous}, see e.g.~\cite[Definition 5.1]{GDaPrato_analysis}. We further suppose that $b\in E$ is a regular point for itself, that is $\Prob_b(T^X_b=0)=\Prob_b(T^Y_b=0)=1$, where $T^X_b=\inf\{t>0;\:X_t=b\}$ is the hitting time of $b$ for process $X$, and $T^Y_b$ is defined similarly. Let $X^{\dag}=(X^{\dag}_t)_{t\geq 0}=(X_t;\:0\leq t \leq T^X_b)$ be the process $X$ killed when it hits $b$, after which it is sent to the cemetery point $\Delta$, where we adopt the usual convention that a real-valued function $f$ on $E$ can be extended to $\Delta$ by $f(\Delta)=0$.  We also let $P^{\dag}=(P^{\dag}_t)_{t \geq 0}$ denote the semigroup of $X^{\dag}$, i.e.~$P^{\dag}_tf=\E_x[f(X_t);\:t<T^X_b]$, and we define the process $Y^{\dag}=(Y^{\dag}_t)_{t\geq 0}$ along with its semigroup $Q^{\dag}=(Q^{\dag}_t)_{t\geq 0}$ in  a similar fashion. Next, let $U_qf = \int_0^{\infty}e^{-qt}P_tf dt$ and $U_q^{\dag}f=\int_0^{\infty}e^{-qt}P^{\dag}_tfdt$ be the resolvents of $P$ and $P^{\dag}$, respectively, and, $V_q$ and $V^{\dag}_q$ be the resolvents of $Q$ and $Q^{\dag}$.

We now assume that there exists an excessive measure $\frakm$ (resp.~$m$) on $(E,\mathcal{E})$ for the semigroup $P$ (resp.~$Q$), i.e.~$\frakm$ (resp.~$m$) is a $\sigma$-finite measure and $\frakm P_t\leq \frakm$ (resp.~$mQ_t\leq m$) for all $t>0$, and in particular, when $\frakm P_t = \frakm$ (resp.~$mQ_t=m$), $\frakm$ (resp.~$m$) is an invariant measure. Then a standard argument, see \cite[Theorem 5.8]{GDaPrato_analysis}, indicates that $P$ extends uniquely into a strongly continuous semigroup on $\rmL^2(\frakm)$, which is the weighted Hilbert space
\[\Lnu = \{ f:E\rightarrow \R \textnormal{ measurable }; \|f\|_{\frakm}=\int_E f^2(x)\frakm(dx)<\infty\}\]
endowed with the norm $\|\cdot \|_{\frakm}$ (when there is no confusion and for sake of simplicity, If $\frakm$ is absolutely continuous, we also use $\frakm$ to denote its density and write $\Lnu$ the Hilbert space with weight $\frakm(x)dx$.) Similarly, $Q$ also admits a strongly continuous extension to $\rmL^2(m)$. Note that since $\frakm P^{\dag}_t \leq \frakm P_t \leq \frakm$, $\frakm$ is also an excessive measure for $P^{\dag}$, hence $P^{\dag}$ can also be uniquely extended to a strongly continuous semigroup on $\rmL^2(\frakm)$. Similar results holds for $Q^{\dag}$ as well.

Now let us follow the construction as described in Getoor \cite{Getoor_1999}  to observe that there exists a left-continuous $\hatX=(\hatX_t)_{t\geq 0}$ under the probabilty measure $\widehat{\Prob}_x$, which is the dual process of $X$ with respect to $\frakm$, and is moderate Markov. Note that the measures $(\widehat{\Prob}_x)_{x\in E}$ are only determined modulo an $\frakm$-polar set. Let $\hatP_tf=\widehat{\Prob}_x[f(\hatX_t)]$ denote the moderate Markov dual semigroup associated with $\hatX$ and $\hatU_q$ be the resolvent, then $\hatP$ and $\hatU_q$ are linked to $P$ and $U_q$ via the duality formula
\[(P_tf,g)_{\frakm}=(f,\hatP_t g)_{\frakm}, \quad (U_qf,g)_{\frakm}=(f,\hatU_q g)_{\frakm}\]
for each $f,g\in\Bb(E), q>0, t\geq 0$, where throughout we denote
\begin{equation}\label{defn_bracket}
(f,g)_{\frakm}=\int_E f(x)g(x) \frakm(dx)
\end{equation}
whenever this integral exists.


Because $b$ is a regular point, the singleton $\{b\}$ is not semipolar and there exists a local time $\frakl^X$ at $b$, which is a positive continuous additive functional of $X$, increasing only on the visiting set $\{t\geq 0;\: X_t=b\}$. We mention that $\frakl^X$ is uniquely determined up to a multiplicative constant. The inverse local time $\tau^X=(\tau^X_t)_{t\geq 0}$ is the right continuous inverse of $\frakl^X$, i.e.~
\[\tau^X_t=\inf\{s>0;\:\frakl^X_s>t\}, \quad t\geq 0.\]
It is a standard argument that under the law $\Prob_x$, $\tau^X$ is a strictly increasing subordinator and therefore for any $q>0$,
\[\E_x[e^{-q\tau^X_t}]=e^{-t\Phi_X(q)},\]
where $\Phi_X(q)$ is the Laplace exponent of $\tau^X$ and admits the following L\'evy-Khintchin representation
\begin{equation} \label{eq:Levy_Khin_Phi}
\Phi_X(q)=\delta_X +q \gamma_X+\int_0^{\infty}(1-e^{-qr})\mu_X(dr),
\end{equation}
with $\delta_X=\lim_{q\rightarrow 0} \Phi_X(q)$ is the so-called killing parameter, $\gamma_X=\lim_{q\rightarrow \infty}\frac{\Phi_X(q)}{q}$ is the so-called elasticity parameter, and $\mu_X$ is the L\'evy measure of $\tau^X$, that is a $\sigma$-finite measure concentrated on $(0,\infty)$ satisfying $\int_0^{\infty}(1\wedge y)\mu_X(dy)<\infty$. Furthermore, we follow \cite[Chapter X, Section 2]{RY_Cont_Martingale} to define the so-called Revuz measure $\mathfrak{R}_{\frakl^X}$ for local time $\frakl^X$ as
\[\mathfrak{R}_{\frakl^X}f=\lim_{t\rightarrow 0}\frac1t \E_{\frakm}\left[\int_0^t f(X_s)d\frakl^X_s\right],\]
which, in the case when $\frakm$ is an invariant measure, can be defined by the simpler formula
\[\mathfrak{R}_{\frakl^X}f=\E_{\frakm}\left[\int_0^1 f(X_s)d\frakl^X_s\right].\]
Its total mass, denoted by $c(\frakm)$, is
\begin{equation} \label{eq:c_nu}
c(\frakm)=\mathfrak{R}_{\frakl^X}\mathbf{1},
\end{equation}
which is a positive constant.\textit{ Since the local time can be defined up to a multiplicative constant, in order to streamline the discussion, we suppose for the remainder of this paper that the local time $\frakl^X$ has been normalized so that $c(\frakm)=1$.} The notations for $\frakl^Y,\tau^Y,\Phi_Y(q),\delta_Y,\gamma_Y,\mu_Y$ are trivial to understand, and we also suppose that $\frakl^Y$ has been normalized to make $c(m)=1$.

Moreover, by Fitzsimmons and Getoor \cite[Proposition (A.4)]{FG_excursion_theory}, since $b$ is regular, we have $\widehat{\Prob}_b[T^{\hatX}_b=0]=1$, where $T^{\hatX}_b$ is hitting time of $\hatX$ to $b$. Let $\hatX^{\dag}=(\hatX_t)_{t<T^{\hatX}_b}$ denote the process $\hatX$ killed at $b$, and $\hatP^{\dag}$ and $\hatU^{\dag}_q$ for its semigroup and resolvent. In addtion, for $x\in E$, we let
\begin{align*}
\varphi^X_q(x)& =\E_x[e^{-qT^X_b}],\varphi^X(x) =\varphi^X_0(x)=\Prob_x[T^X_b<\infty],\hatphi_q(x) = \E_x[e^{-qT^{\hatX}_b}], \hatphi(x)=\hatphi_0(x).
\end{align*}
It is well-known that strong Markov property implies the following relation, for any $x\in E$ and $f\in \Bb(E)\cup \rmL^2(\frakm)$,
\begin{equation}\label{eq:strong_Markov}
U_qf(x) = U_q^{\dag}f(x)+\varphi^X_q(x)U_qf(b).
\end{equation}
On the other hand, although the dual process $\hatX$ is moderate Markov, by \cite[Corollary (A.11)]{FG_excursion_theory}, we have for all $f\in \Bb^{+}(E)$,
\begin{equation}\label{eq:moderate_Markov}
\hatU_qf(x) = \hatU_q^{\dag}f(x)+\hatphi_q(x)\hatU_qf(b).
\end{equation}
Similarly there exists a moderate Markov dual process $\widehat{Y}$ associated with $Y$ and $m$, whose semigroup and resolvent are denoted by $\hatQ$ and $\hatV_q$ respectively. The killed process is denoted by $\hatY^{\dag}$ and its semigroup and resolvent are denoted by $\hatQ^{\dag}$ and $\hatV^{\dag}_q$, and the notations $\varphi^Y_q,\varphi^Y,\hatrho_q,\hatrho$ are self-explanatory.

\section{Statements of main results}
In this section, we will state the main theorem and some of its corollaries. We start by defining a few notations. For two sets $A$ and $B$, we write $A\subsetdense B$ if $A\subseteq B$ and $\overline{A} = B$, where $\overline{A}$ is the closure of $A$. Moreover, for some operator $\Lambda$, we denote  $\D_{\Lambda}$ to be its domain, $Ran(\Lambda)$ its range, and we define the following class of operators
\begin{equation}
\mathcal{C}(m,\frakm)=\{\Lambda:\D_{\Lambda}\subsetdense \rmL^2(m)\rightarrow Ran(\Lambda)\subsetdense\rmL^2(\frakm) \textnormal{ linear, injective and closed.}\}.
\end{equation}
Note that if $\Lambda \in \mathcal{C}(m,\frakm)$, then $\hatLambda \in \mathcal{C}(\frakm,m)$ where $\hatLambda$ is the $\rmL^2$-adjoint of $\Lambda$, i.e.~for any $f\in \D_{\Lambda},g\in \D_{\hatLambda}$, we have $\left\langle \Lambda f,g\right\rangle_{\frakm} = \left\langle  f,\hatLambda g\right\rangle_{m}$, where $\left\langle \cdot, \cdot\right\rangle_{\frakm}$ (resp.~$\left\langle \cdot, \cdot\right\rangle_{m}$) denotes the standard inner product in $\rmL^2(\frakm)$ (resp.~$\rmL^2(m)$). In addition, we say $\Lambda$ is \textit{mass preserving} if $\Lambda \mathbf{1}_E\equiv \mathbf{1}_E$ where $\mathbf{1}_E(x)=1$ for all $x\in E$, and it is assumed that $\mathbf{1}_E$ is in the (possibly) extended domain of $\Lambda$. Then we have the following results.
\subsection{Intertwining relations and inverse local time}
The main result of this section is stated in the following Theorem.
\begin{theorem}\label{thm}
Let $\Lambda\in \mathcal{C}(m,\frakm)$, with both $\Lambda$ and $\hatLambda$ being mass preserving. Consider the following claims.
\begin{enumerate}
\item \label{it:intertwin_killed} $P^{\dag}_t \Lambda f= \Lambda Q^{\dag}_tf$ for all $f\in \D_{\Lambda} \cup \{\mathbf{1}_E\}$.
\item \label{it:intertwin_refl} $P_t \Lambda f= \Lambda Q_tf$ for all $f\in\D_{\Lambda}\cup \{\mathbf{1}_E\}$.
\item \label{it:phi_rho} For any $q> 0$, we have $\varphi^X_q(x)=\Lambda \varphi^Y_q(x)$ $\frakm$-almost everywhere (a.e.~for short) on $E$, and $\hatrho_q(x)=\Lambda \hatphi_q(x)$ $m$-a.e.~on $E$.
\item \label{it:inve_local_time} $\Phi_X(q)=\Phi_Y(q) $ for each $q>0$.
\end{enumerate}
Then, we have \[\eqref{it:intertwin_killed} \Rightarrow \eqref{it:phi_rho} \Rightarrow \eqref{it:inve_local_time}    \textrm{ and } \eqref{it:intertwin_killed} \Rightarrow \eqref{it:intertwin_refl} .\]
If in addition, writing $\mathbf{1}_{\{b\}}$ the indicator function at $\{b\}$, we have
\begin{equation} \label{cond:2imply1}
\begin{aligned}
\Lambda \mathbf{1}_{\{b\}}(x) &= \mathbf{1}_{\{b\}}(x), \quad  \hatLambda \mathbf{1}_{\{b\}}(x) = \mathbf{1}_{\{b\}}(x) \textnormal{ for any $x\in E$, and} \\
\Lambda Q_tf(b) &= Q_tf(b), \quad \hatLambda \hatP_tg(b)=\hatP_tg(b) \textnormal{ for all $f\in \D_{\Lambda}\cup\{\mathbf{1}_E\}, g\in \D_{\hatLambda}\cup\{\mathbf{1}_E\}$,}
\end{aligned}
\end{equation}
then
\[\eqref{it:intertwin_refl} \Rightarrow \eqref{it:phi_rho} \textrm{ and } \eqref{it:intertwin_killed} \Leftrightarrow \eqref{it:intertwin_refl}.\]
\end{theorem}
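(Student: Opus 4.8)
The plan is to translate every semigroup identity into its resolvent form --- legitimate since $\Lambda$ and $\hatLambda$ are closed and therefore commute with the Bochner integrals $\int_0^\infty e^{-qt}(\cdot)\,dt$ --- and then to play the strong and moderate Markov decompositions \eqref{eq:strong_Markov}, \eqref{eq:moderate_Markov} against the Fitzsimmons--Getoor representation of the excursion quantities. Two elementary facts will be used throughout. First, $\varphi^X_q=\mathbf{1}_E-q\,U^{\dag}_q\mathbf{1}_E$ and $\hatphi_q=\mathbf{1}_E-q\,\hatU^{\dag}_q\mathbf{1}_E$ (and the same for $Y,\hatY$), simply because $P^{\dag}_t\mathbf{1}_E(x)=\Prob_x(t<T^X_b)$. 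Second, writing $u_q$ for the resolvent density of $P$ with respect to $\frakm$ and using the normalisation $c(\frakm)=1$ --- so that $\frakR_{\frakl^X}=\delta_b$ --- the Fitzsimmons--Getoor calculus yields $u_q(b,b)=1/\Phi_X(q)$ together with $\hatphi_q=\Phi_X(q)\,u_q(b,\cdot)$ and $\varphi^X_q=\Phi_X(q)\,u_q(\cdot,b)$; equivalently $U_q g(b)=\Phi_X(q)^{-1}(\hatphi_q,g)_{\frakm}$ and, since $X$ has infinite lifetime and is thus conservative, $\Phi_X(q)=q\,(\hatphi_q,\mathbf{1}_E)_{\frakm}$. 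The $Y$-analogues hold with $(\frakm,\hatphi_q,u_q,\Phi_X)$ replaced by $(m,\hatrho_q,v_q,\Phi_Y)$.

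For \eqref{it:intertwin_killed}$\Rightarrow$\eqref{it:phi_rho}: apply \eqref{it:intertwin_killed} to $f=\mathbf{1}_E$ and its $\rmL^2$-adjoint to $g=\mathbf{1}_E$, Laplace-transform to get $U^{\dag}_q\mathbf{1}_E=\Lambda V^{\dag}_q\mathbf{1}_E$ and $\hatLambda\hatU^{\dag}_q\mathbf{1}_E=\hatV^{\dag}_q\mathbf{1}_E$, and substitute the first fact above, using $\Lambda\mathbf{1}_E=\mathbf{1}_E=\hatLambda\mathbf{1}_E$ and linearity; this gives $\varphi^X_q=\Lambda\varphi^Y_q$ and $\hatrho_q=\hatLambda\hatphi_q$ (the adjoint $\hatLambda$ entering the second identity of \eqref{it:phi_rho}). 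For \eqref{it:phi_rho}$\Rightarrow$\eqref{it:inve_local_time}: by the second fact, $\Phi_Y(q)=q\,(\hatrho_q,\mathbf{1}_E)_m=q\,(\hatLambda\hatphi_q,\mathbf{1}_E)_m=q\,(\hatphi_q,\Lambda\mathbf{1}_E)_{\frakm}=q\,(\hatphi_q,\mathbf{1}_E)_{\frakm}=\Phi_X(q)$. For \eqref{it:intertwin_killed}$\Rightarrow$\eqref{it:intertwin_refl}: write \eqref{eq:strong_Markov} for the function $\Lambda f$, i.e.\ $U_q\Lambda f=U^{\dag}_q\Lambda f+\varphi^X_q\,U_q\Lambda f(b)$, and feed in $U^{\dag}_q\Lambda f=\Lambda V^{\dag}_q f$ (from \eqref{it:intertwin_killed}), $\varphi^X_q=\Lambda\varphi^Y_q$ (from \eqref{it:phi_rho}), and $U_q\Lambda f(b)=\Phi_X(q)^{-1}(\hatphi_q,\Lambda f)_{\frakm}=\Phi_Y(q)^{-1}(\hatrho_q,f)_m=V_q f(b)$ (from \eqref{it:phi_rho}--\eqref{it:inve_local_time}); this gives $U_q\Lambda f=\Lambda(V^{\dag}_q f+\varphi^Y_q V_q f(b))=\Lambda V_q f$, and inverting the Laplace transform (again using that $\Lambda$ is closed, together with strong continuity) yields \eqref{it:intertwin_refl}.

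Now assume \eqref{cond:2imply1}. Since \eqref{it:intertwin_killed}$\Rightarrow$\eqref{it:intertwin_refl} and \eqref{it:intertwin_killed}$\Rightarrow$\eqref{it:phi_rho} are already in hand, it remains to prove \eqref{it:intertwin_refl}$\Rightarrow$\eqref{it:intertwin_killed}. Laplace-transforming \eqref{it:intertwin_refl} together with the second line of \eqref{cond:2imply1} gives $U_q\Lambda f(b)=\Lambda V_q f(b)=V_q f(b)$ and, dually, $\hatV_q\hatLambda g(b)=\hatU_q g(b)$; in terms of resolvent densities these say $\hatLambda\,u_q(b,\cdot)=v_q(b,\cdot)$ and $\Lambda\,v_q(\cdot,b)=u_q(\cdot,b)$, in $\rmL^2(m)$ and $\rmL^2(\frakm)$ respectively (so in particular $u_q(b,\cdot)\in\D_{\hatLambda}$ and $v_q(\cdot,b)\in\D_{\Lambda}$). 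Pairing these two identities against one another --- using the adjoint relation and the resolvent identity $\int_E u_q(x,y)u_q(y,z)\,\frakm(dy)=-\partial_q u_q(x,z)$ --- one finds $(\hatLambda u_q(b,\cdot),v_q(\cdot,b))_m$ equal both to $(u_q(b,\cdot),u_q(\cdot,b))_{\frakm}=-\partial_q u_q(b,b)$ and to $(v_q(b,\cdot),v_q(\cdot,b))_m=-\partial_q v_q(b,b)$, whence $\partial_q(1/\Phi_X(q))=\partial_q(1/\Phi_Y(q))$; thus $1/\Phi_X(q)-1/\Phi_Y(q)$ is constant in $q$, and letting $q\to\infty$ forces it to vanish, i.e.\ $\Phi_X(q)=\Phi_Y(q)$. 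Combining $\Phi_X=\Phi_Y$ with $\hatphi_q=\Phi_X(q)u_q(b,\cdot)$, $\hatrho_q=\Phi_Y(q)v_q(b,\cdot)$ and $\hatLambda u_q(b,\cdot)=v_q(b,\cdot)$ gives $\hatrho_q=\hatLambda\hatphi_q$, and symmetrically $\varphi^X_q=\Lambda\varphi^Y_q$; this is \eqref{it:phi_rho}. Finally, substituting $\varphi^X_q=\Lambda\varphi^Y_q$ and $U_q\Lambda f(b)=V_q f(b)$ into \eqref{eq:strong_Markov} written for $X$ and for $Y$ yields $U^{\dag}_q\Lambda f=\Lambda V^{\dag}_q f$, i.e.\ (after Laplace inversion) \eqref{it:intertwin_killed}, which closes \eqref{it:intertwin_killed}$\Leftrightarrow$\eqref{it:intertwin_refl}.

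The bookkeeping --- commuting $\Lambda,\hatLambda$ with Laplace integrals, the precise sense in which $\mathbf{1}_E$ and $\mathbf{1}_{\{b\}}$ lie in the extended domains, and the domain invariances needed to invert Laplace transforms --- is routine and I will dispatch it quickly. The genuine obstacle is the third paragraph: \eqref{it:intertwin_refl} alone, being a statement about the conservative recurrent semigroup $P$, carries no information about the single, possibly $\frakm$-null, point $b$, so the scalar identity $\Phi_X(q)=\Phi_Y(q)$ must be squeezed out of the diagonal value $u_q(b,b)$ of the resolvent density at $b$; hypothesis \eqref{cond:2imply1} supplies exactly the missing control at $b$, and the quadratic pairing of the two kernel identities --- which rests on the Fitzsimmons--Getoor formulas and on the normalisation $c(\frakm)=c(m)=1$ --- is the heart of the argument. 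I expect that establishing the joint regularity of $u_q$ and $v_q$ near $b$ (needed to run the resolvent-identity computation at $(b,b)$) will be the main technical point to be handled with care.
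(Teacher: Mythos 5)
Your first two paragraphs follow essentially the same route as the paper: the same resolvent reduction via closedness of $\Lambda$, the same identity $\varphi^X_q=\mathbf{1}_E-qU^{\dag}_q\mathbf{1}_E$ combined with mass preservation for \eqref{it:intertwin_killed}$\Rightarrow$\eqref{it:phi_rho}, and the same Fitzsimmons--Getoor representations of $\Phi_X$ and of $U_qf(b)$ for \eqref{it:phi_rho}$\Rightarrow$\eqref{it:inve_local_time} and \eqref{it:intertwin_killed}$\Rightarrow$\eqref{it:intertwin_refl} (the paper works with $\Phi_X(q)=\delta_X+q(\varphi^X_q,\hatphi)_{\frakm}$ and checks $\delta_X=\delta_Y$ by a separate limiting argument, whereas you use the equivalent conservative form $\Phi_X(q)=q(\hatphi_q,\mathbf{1}_E)_{\frakm}$; both rest on the same theorem of \cite{FG_excursion_theory}).

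Where you genuinely diverge is \eqref{it:intertwin_refl}$\Rightarrow$\eqref{it:phi_rho}. The paper splits into three cases according to $(\delta_X,\gamma_X)$: for $\delta_X>0$ it tests the resolvent intertwining against $\mathbf{1}_E$, for $\delta_X=0,\gamma_X>0$ against $\mathbf{1}_{\{b\}}$ (this is where the hypothesis $\Lambda\mathbf{1}_{\{b\}}=\mathbf{1}_{\{b\}}$ enters), and for $\delta_X=\gamma_X=0$ it runs an entrance-law argument, showing that $\tbfQ_t(f)=\bfP_t(\Lambda f)$ integrates to a purely excessive measure for $Q^{\dag}$ and is therefore the Maisonneuve entrance law. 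Your quadratic pairing is a unified replacement: \eqref{cond:2imply1} and the FG formulas yield the two adjoint identities $\hatLambda\bigl(\hatphi_q/\Phi_X(q)\bigr)=\hatrho_q/\Phi_Y(q)$ and $\Lambda\bigl(\varphi^Y_q/\Phi_Y(q)\bigr)=\varphi^X_q/\Phi_X(q)$ (note these come directly from the definition of the adjoint, so membership of $\hatphi_q$ in $\D_{\hatLambda}$ is obtained rather than assumed), and pairing them gives $(\varphi^X_q,\hatphi_q)_{\frakm}/\Phi^2_X(q)=(\varphi^Y_q,\hatrho_q)_m/\Phi^2_Y(q)$. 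This is attractive: there is no case analysis, and you never invoke $\Lambda\mathbf{1}_{\{b\}}=\mathbf{1}_{\{b\}}$. What it costs is two facts you must actually supply. First, the identity $(\varphi^X_q,\hatphi_q)_{\frakm}=\Phi_X'(q)$, or better its two-parameter form $(q-r)(\varphi^X_q,\hatphi_r)_{\frakm}=\Phi_X(q)-\Phi_X(r)$, which follows from the resolvent equation for the entrance law $\bfU_q=\hatphi_q\,\frakm|_{E_b}$ together with $\frakm(\{b\})=\gamma_X$; working with this pairing rather than with a two-variable density $u_q(x,y)$ shows that only the sections of the resolvent at $b$, i.e.\ $\varphi^X_q$ and $\hatphi_q$, are needed, so the ``joint regularity of $u_q$ near $b$'' you flag as the main difficulty is in fact a non-issue (using the two-parameter form and letting $r\to\infty$ even lets you avoid differentiating in $q$). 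Second, $\lim_{q\to\infty}\Phi_X(q)=\lim_{q\to\infty}\Phi_Y(q)=\infty$, which holds because $b$ is regular for itself, so the inverse local time cannot be compound Poisson. With these two points filled in, your third paragraph is a correct and arguably cleaner alternative to the paper's case analysis, and your closing step \eqref{it:intertwin_refl}$\Rightarrow$\eqref{it:intertwin_killed} coincides with the paper's.
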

\begin{remark}
\begin{enumerate}
\item Note that $\Lambda$ can be defined up to a multiplicative constant $c$, hence the mass preserving condition (resp.~condition \eqref{cond:2imply1}) can be stated in a slightly more general way as, there exists a constant $c\neq 0$ such that $c\Lambda$ is mass preserving  (resp.~satisfies \eqref{cond:2imply1}).
\item If $m$ is of finite mass on $E$, then clearly $\mathbf{1}_E\in \rmL^2(m)$. Otherwise, we understand the conditions \eqref{it:intertwin_killed} and \eqref{it:intertwin_refl} as $Q_t$ and $P_t$ acting as a Markov operator on $\Bb(E)$. For sake of simplicity, we keep the same notations as the $\rmL^2$-semigroups.
\end{enumerate}
\end{remark}

\begin{corollary}\label{cor:robin}
Under assumption \eqref{it:intertwin_killed} or equivalently, \eqref{it:intertwin_refl} together with the additional condition \eqref{cond:2imply1} for $\Lambda$, then $\Lambda$ also intertwins two generators with Robin boundary condition at $b$.
\end{corollary}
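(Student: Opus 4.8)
The plan is to realise the Robin boundary condition at $b$ as killing at a rate proportional to the local time there, so that the two Robin extensions are themselves recurrent extensions of $P^{\dag}$ and $Q^{\dag}$ in the sense of It\^o with the \emph{same} killed semigroups, and then to feed them into Theorem~\ref{thm}. Fix a parameter $\kappa\in(0,\infty)$ and let $X^{R}$ (resp.~$Y^{R}$) be the process obtained from the recurrent extension $X$ (resp.~$Y$) by sending it to the cemetery $\Delta$ at the time $\inf\{t\geq 0:\frakl^{X}_{t}>\mathbf{e}\}$, with $\mathbf{e}$ an independent exponential random variable of rate $\kappa$; equivalently $P^{R}_{t}f(x)=\E_{x}[e^{-\kappa\frakl^{X}_{t}}f(X_{t})]$ and $Q^{R}_{t}f(x)=\E_{x}[e^{-\kappa\frakl^{Y}_{t}}f(Y_{t})]$, which is legitimate since $\frakl^{X}$ and $\frakl^{Y}$ have been normalised by $c(\frakm)=c(m)=1$. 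For a one-dimensional quasi-diffusion $Y$ with generator $\mathcal{L}$ this is precisely the semigroup whose generator is $\mathcal{L}$ under the elastic condition $\tfrac{d}{ds}f(b^{+})=\kappa f(b)$, the classical Robin condition interpolating between Neumann ($\kappa=0$) and Dirichlet ($\kappa=\infty$); in general we take this construction as the meaning of a ``generator with Robin boundary condition at $b$''.

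First I would check that $(X^{R},Y^{R})$ fits the framework of Section~\ref{sec:prelim}. Since the multiplicative functional $e^{-\kappa\frakl^{X}_{t}}$ charges only $\{b\}$, one has: (i) $X^{R}$ killed upon hitting $b$ coincides with $X^{\dag}$, hence $(P^{R})^{\dag}=P^{\dag}$ and $(Q^{R})^{\dag}=Q^{\dag}$; (ii) $b$ remains regular for itself for both $X^{R}$ and $Y^{R}$; (iii) $P^{R}_{t}f\leq P_{t}f$ on $\Bb^{+}(E)$, so $\frakm P^{R}_{t}\leq\frakm P_{t}\leq\frakm$ and $\frakm$ (resp.~$m$) is still excessive, whence $P^{R}$ (resp.~$Q^{R}$) extends to a strongly continuous contraction semigroup on $\Lnu$ (resp.~$\Lg$); (iv) by duality of positive continuous additive functionals the moderate Markov $\frakm$-dual of $X^{R}$ is $\hatX$ killed at the analogous exponential-of-local-time rate, so the dual process killed at $b$, and hence the functionals $\hatphi_{q}$ and $\hatrho_{q}$, are unchanged; and (v) in \eqref{eq:Levy_Khin_Phi} the inverse local time of $X^{R}$ is that of $X$ run up to an independent exponential time of rate $\kappa$, so $\Phi_{X^{R}}(q)=\Phi_{X}(q)+\kappa$ while $\gamma$, $\mu$ and the It\^o excursion measure are left untouched.

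Granting this, the corollary follows from Theorem~\ref{thm}. By hypothesis claim~\eqref{it:intertwin_killed} holds for $(P,Q,\Lambda)$ --- either directly, or, under \eqref{cond:2imply1}, via the implication \eqref{it:intertwin_refl} $\Rightarrow$ \eqref{it:intertwin_killed}. By (i) and (iv), claim~\eqref{it:intertwin_killed} for the triple $(P^{R},Q^{R},\Lambda)$ is, after these identifications, the very same statement and therefore holds too; and $\Lambda\in\mathcal{C}(m,\frakm)$ with $\Lambda,\hatLambda$ mass preserving is unchanged. Applying the implication \eqref{it:intertwin_killed} $\Rightarrow$ \eqref{it:intertwin_refl} of Theorem~\ref{thm} to $(P^{R},Q^{R},\Lambda)$ then gives $P^{R}_{t}\Lambda f=\Lambda Q^{R}_{t}f$ for all $f\in\D_{\Lambda}\cup\{\mathbf{1}_{E}\}$ and $t\geq 0$; differentiating in $t$ and using that $\Lambda$ is closed, this intertwining passes to the generators, which are exactly the Robin generators of $Q^{\dag}$ and $P^{\dag}$, as required.

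The point that needs care --- and the main, if minor, obstacle --- is that $X^{R}$ is not conservative, whereas Section~\ref{sec:prelim} standingly posits an infinite lifetime; one must confirm that the proof of \eqref{it:intertwin_killed} $\Rightarrow$ \eqref{it:intertwin_refl} nowhere uses conservativeness, being driven by \eqref{it:intertwin_killed} $\Rightarrow$ \eqref{it:phi_rho} together with the excursion-theoretic reconstruction of $U_{q}$ from $U^{\dag}_{q}$, $\varphi^{X}_{q}$ and $\Phi_{X}$ --- a reconstruction that already accommodates a positive killing parameter, killing at $b$ only shifting $\Phi_{X}$ to $\Phi_{X}+\kappa$. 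Should one prefer to bypass the Theorem, the same conclusion comes from a direct resolvent computation: from the strong Markov decomposition $U^{R}_{q}g=U^{\dag}_{q}g+\varphi^{X}_{q}\,U^{R}_{q}g(b)$ and the excursion formula $U^{R}_{q}g(b)=\tfrac{\Phi_{X}(q)}{\Phi_{X}(q)+\kappa}\,U_{q}g(b)$ (with the analogues for $Q^{R}$), combined with $U^{\dag}_{q}\Lambda f=\Lambda V^{\dag}_{q}f$, $\varphi^{X}_{q}=\Lambda\varphi^{Y}_{q}$ and $\Phi_{X}=\Phi_{Y}$ from claims~\eqref{it:intertwin_killed}, \eqref{it:phi_rho}, \eqref{it:inve_local_time}, and $\Lambda V_{q}f(b)=V_{q}f(b)$ from \eqref{cond:2imply1}, one reads off $U^{R}_{q}\Lambda f=\Lambda V^{R}_{q}f$ for every $q>0$, whereupon Laplace inversion together with \eqref{eq:stoc_cont} yields the semigroup and hence the generator intertwinings.
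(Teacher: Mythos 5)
Your proposal is correct in substance but takes a genuinely different route from the paper, and in effect proves a slightly different (more classical, arguably stronger) statement. The paper's own proof is two lines: Theorem~\ref{thm} gives $\Phi_X=\Phi_Y$, hence $\gamma_X=\lim_{q\to\infty}\Phi_X(q)/q=\gamma_Y$; reading off from Rogers' decomposition \eqref{eq:decomp_Uqfb} that $\gamma$ encodes the boundary behaviour (stickiness) of the given recurrent extensions themselves, the authors interpret $\gamma_X=\gamma_Y=0$ as a common Neumann condition and $\gamma_X=\gamma_Y>0$ as a common Robin-type condition, so the already established intertwining \eqref{it:intertwin_refl} \emph{is} the asserted intertwining of Robin generators --- no new semigroups are constructed. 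You instead take ``Robin'' in the elastic sense $f'(b)=\kappa f(b)$, realise it by killing $X$ and $Y$ at rate $\kappa$ in the local-time scale, and show $\Lambda$ intertwines the resulting semigroups for every $\kappa>0$; this is a worthwhile complement, closer to the introduction's claim that the intertwining survives arbitrary modifications of the boundary condition, and your direct resolvent verification (replacing $\Phi_X=\Phi_Y$ by $\Phi_X+\kappa=\Phi_Y+\kappa$ in the Fitzsimmons--Getoor formula for $U_qf(b)$, while $U^{\dag}_q$, $\varphi^X_q$, $\hatphi_q$ are untouched) is exactly the right way to sidestep the fact that $P^{R},Q^{R}$ are not conservative and hence fall outside the standing assumptions of Section~\ref{sec:prelim}, so that Theorem~\ref{thm} cannot be invoked verbatim. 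Two caveats: the identification of the $\frakm$-dual of $X^{R}$ as $\hatX$ killed through the dual local time should be backed by the Revuz duality of additive functionals (e.g.\ Getoor's measure-perturbation results already cited in the paper), and the final passage from the semigroup intertwining to the generators by ``differentiating in $t$ and using closedness'' is only justified when $\Lambda$ is bounded --- closedness alone does not give convergence of $\Lambda$ applied to the difference quotients; this is precisely the scope of the remark following the paper's proof of this corollary, so for a general closed $\Lambda$ you should either state the conclusion at the level of semigroups/resolvents (which is all the paper's proof does) or add the corresponding domain hypotheses.
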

Here we address that as opposed to the setting in \cite{Arendt_2014}, where there are no similarity transforms between two Laplacians acting on two isospectral domains with Robin boundary condition, our situation is different in two aspects. First, the two generators are acting on the same space and both have the same boundary at 0. Second, the intertwining operator $\Lambda$ that we consider in this paper is not a similarity transform as in \cite{Arendt_2014}. Therefore, we see that under a different setting, there indeed exists an intertwining relation between two Robin type generators.

\subsection{Excursion theory}
We now provide a further probabilistic explanation for the intertwining relation by means of excursion theory. We first recall from Maisonneuve \cite{Maisonneuve_1975} that, for the excursions of $X$ from the regular point $b$, we can associate an exit system $(\bfP,\frakl^X)$ where $\bfP$ is the so-called (Maisonneuve) excursion measure. Moreover, let us define the collection of $\sigma$-finite measures $(\bfP_t)_{t> 0}$ by
\[\bfP_t(f)=\bfP[f(X_t),t<T_b],\]
for any $f\in \Bb^+(E)$. Then $(\bfP_t)_{t>0}$ is an entrance law of semigroup $P^{\dag}$, in other words, $\bfP_{s+t}=\bfP_s P^{\dag}_t$ for any $s>0,t\geq 0$. Furthermore, for any $q>0$, we define $\bfU_q (f)=\int_0^{\infty}e^{-qt}\bfP_t(f)dt$. Similarly, let $\bfQ$ denote the Maisonneuve excursion measure for process $Y$, $(\bfQ_t)_{t> 0}$ be the associated entrance law, and $\bfV_q (f)=\int_0^{\infty}e^{-qt}\bfQ_t(f)dt$. We use $l_X(a)$ (resp.~$l_Y(a)$) to denote the length of the first excursion interval with length $l>a$ for the process $X$ (resp.~$Y$). In addition, we let $M_X$ (resp.~$M_Y$) denote the closure in $[0,\infty)$ of the visiting set $\{t\geq 0; X_t=b\}$ (resp.~$\{t\geq 0; Y_t=b\}$), and $\zeta_X=\sup M_X$ (resp.~$\zeta_Y=\sup M_Y$) be the last exit time of $X$ (resp.~$Y$) from $b$. Then we have the following corollary.
\begin{corollary}\label{prop:excur}
Under the assumption in Theorem \ref{thm} \eqref{it:intertwin_killed}, the following statements hold.
\begin{enumerate}[(a)]
\item For any $A\in \mathcal{B}(\R^+)$ a Borel set, we have $\bfP(T_b^X\in A)= \bfQ(T_b^Y\in A)$.
\item For every $a\in \R_+$, $l_X(a)$ and $l_Y(a)$ have the same distribution.
\item For every $x>0$, $\zeta_X$ and $\zeta_Y$ have the same distribution under $\Prob_x$.
\end{enumerate}
\end{corollary}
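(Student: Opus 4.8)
The plan is to deduce all three parts from the single identity $\Phi_X(q)=\Phi_Y(q)$, $q>0$, which Theorem~\ref{thm} already supplies through \eqref{it:intertwin_killed}$\,\Rightarrow\,$\eqref{it:phi_rho}$\,\Rightarrow\,$\eqref{it:inve_local_time}, combined with the classical description (It\^o, Maisonneuve; see also \cite{FG_excursion_theory}) of the excursions of $X$ away from $b$, indexed by the normalized local time $\frakl^X$, as a Poisson point process with intensity $ds\otimes\bfP$. For part (a): by uniqueness of the L\'evy--Khintchine representation \eqref{eq:Levy_Khin_Phi}, $\Phi_X=\Phi_Y$ forces $\delta_X=\delta_Y$, $\gamma_X=\gamma_Y$ and $\mu_X=\mu_Y$. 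On the other hand, the inverse local time $\tau^X$ is, by It\^o's theorem, the sum of the drift $\gamma_X t$ and of the lifetimes of the excursions enumerated by the point process, so its L\'evy--It\^o decomposition identifies $\mu_X$ on $(0,\infty)$ with the image of $\bfP$ under the lifetime map $\omega\mapsto T_b(\omega)$, and identifies $\delta_X$ with $\bfP(T_b^X=\infty)$; the Revuz normalization $c(\frakm)=1$ of $\frakl^X$ is precisely what pins down $\bfP$ and removes the usual multiplicative ambiguity. Since $\bfP$ does not charge $\{T_b^X=0\}$, this yields $\bfP(T_b^X\in A)=\mu_X(A)=\mu_Y(A)=\bfQ(T_b^Y\in A)$ for every Borel $A\subseteq\R_+$, which is (a).

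For part (b), I would restrict the excursion point process of $X$ to the excursions of lifetime exceeding $a$: for $a>0$ this is again a Poisson point process on the local-time axis, of finite rate $\bfP(T_b^X>a)=\mu_X((a,\infty))<\infty$, carrying i.i.d.\ marks of law $\bfP(\,\cdot\mid T_b^X>a)$. Since the chronological order of the excursion intervals coincides with the order of their local-time labels, $l_X(a)$ has law $\bfP(T_b^X\in\,\cdot\mid T_b^X>a)$ and $l_Y(a)$ has law $\bfQ(T_b^Y\in\,\cdot\mid T_b^Y>a)$; these coincide by (a). The value $a=0$ is treated separately (trivial when $\mu_X((0,\infty))=\infty$, and read off from $(\delta_X,\mu_X)$ when $\mu_X((0,\infty))<\infty$).

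For part (c), under $\Prob_b$ the set of times at which $X$ visits $b$ is the closure of the range of $\tau^X$, so $\zeta_X=\tau^X_{\frakl^X_\infty-}$ with $\frakl^X_\infty\sim\mathrm{Exp}(\delta_X)$ (and $\zeta_X=\infty$ a.s.\ when $\delta_X=0$). Writing $\tau^X$ as an unkilled subordinator of Laplace exponent $\Phi_X(\cdot)-\delta_X$ run up to this independent exponential time, which is a.s.\ not a jump time of it, gives $\E_b[e^{-q\zeta_X}]=\delta_X/\Phi_X(q)$, equal to $\E_b[e^{-q\zeta_Y}]$ since $\Phi_X=\Phi_Y$; this settles (c) under $\Prob_b$. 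For $x\neq b$ I would split on $\{T^X_b<\infty\}$ and use the strong Markov property, $\zeta_X=T^X_b+\zeta_X\circ\theta_{T^X_b}$ with the second term an independent copy of $\zeta_X$ under $\Prob_b$, so that $\E_x[e^{-q\zeta_X}]=\varphi^X_q(x)\,\delta_X/\Phi_X(q)+(1-\varphi^X(x))$ for $\frakm$-a.e.\ $x$; Theorem~\ref{thm}\eqref{it:phi_rho} identifies $\varphi^X_q$ with $\Lambda\varphi^Y_q$ (and, letting $q\downarrow0$ and using that $\Lambda$ is closed, $\varphi^X$ with $\Lambda\varphi^Y$), while $\Lambda\mathbf{1}_E\equiv\mathbf{1}_E$ turns the right-hand side into $\Lambda(\E_\cdot[e^{-q\zeta_Y}])(x)$; since the recurrent extensions $X$ and $Y$ both eventually reach $b$ one has $\varphi^X=\varphi^Y=\mathbf{1}_E$, and equality of the Laplace transforms, hence of the laws, follows.

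The main obstacle is part (a): one must invoke the Fitzsimmons--Getoor representation \cite{FG_excursion_theory} to tie together the Maisonneuve excursion measure $\bfP$, the Revuz-normalized local time $\frakl^X$ (with $c(\frakm)=1$), and the L\'evy triple $(\delta_X,\gamma_X,\mu_X)$ of $\tau^X$ — in particular the assignment of the never-returning excursion to the killing coefficient $\delta_X$ — and then match these normalizations simultaneously on the $X$- and $Y$-sides of the intertwining. Granting (a), part (b) is routine Poisson point process bookkeeping and part (c) reduces to the last-exit-time Laplace transform above, the one remaining delicate point in (c) being the contribution of $\{T^X_b=\infty\}$, which is controlled through \eqref{it:phi_rho} and the mass-preservation of $\Lambda$.
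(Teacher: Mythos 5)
Parts (a) and (b) of your proposal are correct and follow essentially the paper's own route: the paper also starts from $\Phi_X=\Phi_Y$ and the uniqueness of the L\'evy--Khintchine triplet, obtains (a) by identifying $\bfP(T^X_b\in\cdot)$ with $\mu_X$ (it cites \cite[Corollary 2.22]{FG_excursion_theory}, making the same remark as you about the Revuz normalization pinning down the exit system, where you instead unpack the Poisson point process/L\'evy--It\^o identification), and obtains (b) from the formula $\Prob(l_X(a)>c)=\overline{\mu}_X(c)/\overline{\mu}_X(a)$, citing \cite[Section IV.2, Lemma 1]{Bertoin_Levy}, which you re-derive by elementary point-process bookkeeping. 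Nothing to object to there.

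Part (c), however, has a genuine gap. Your strong Markov decomposition at $T^X_b$ gives, for $x\neq b$, $\E_x[e^{-q\zeta_X}]=\varphi^X_q(x)\,\delta_X/\Phi_X(q)+\Prob_x(T^X_b=\infty)$, and the analogous formula for $Y$. In the only non-trivial case $\delta_X=\delta_Y>0$, equality of these two transforms at a fixed $x$ would require $\varphi^X_q(x)=\varphi^Y_q(x)$ pointwise, whereas Theorem~\ref{thm}\eqref{it:phi_rho} only yields $\varphi^X_q=\Lambda\varphi^Y_q$ $\frakm$-a.e., and $\Lambda$ is not the identity. Your proposed fix, ``$\varphi^X=\varphi^Y=\mathbf{1}_E$ since both processes eventually reach $b$'', does not close this: first, it is not among the hypotheses of Theorem~\ref{thm} (the setting only assumes $b$ regular for itself and infinite lifetime, not that $b$ is hit $\Prob_x$-a.s.\ from every $x$); second, even if it held, it would only remove the term $\Prob_x(T^X_b=\infty)$ and not the factor $\varphi^X_q(x)=\E_x[e^{-qT^X_b}]<1$, which still differs from $\varphi^Y_q(x)$. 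What your argument actually establishes is the weaker identity $\E_\cdot[e^{-q\zeta_X}]=\Lambda\,\E_\cdot[e^{-q\zeta_Y}]$ $\frakm$-a.e., not the pointwise equality of laws under $\Prob_x$ asserted in (c). The paper closes (c) in one line by invoking \cite[Proposition 2.17]{FG_excursion_theory}, which supplies the formula $\E_x[e^{-q\zeta_X}]=\delta_X/\Phi_X(q)$ with no $x$-dependent factor, so that $\Phi_X=\Phi_Y$ (hence $\delta_X=\delta_Y$) gives the claim immediately; your derivation only justifies this formula under $\Prob_b$, and the passage from $\Prob_b$ to a general starting point $x$ is exactly where your argument breaks down.
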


\subsection{Krein's spectral theory of strings}\label{sec:Krein_defn}
We first recall that the Laplace exponent of the inverse local time is an essential object in Krein's spectral theory of strings, for which we will provide a brief review of the known results herein, and we refer to \cite{KS_spectral_measure, Krein} for an excellent account. For sake of simplicity, here we take $b=0$ as the regular boundary but note that the choice of 0 is indeed arbitrary. Suppose $Y$ is the Markov process corresponding to the generalized second order differential operator $\bfG=\frac{d}{d\mathtt{m}} \frac{d}{dx}$ with boundary condition $f^{-}(0)=\lim_{x\downarrow 0}\frac{f(0)-f(-x)}{x}=0$, where $\mathtt{m}$ is a string, that is a right-continuous and non-decreasing function defined on $[0,l)\rightarrow [0,\infty)$ for some $0<l=l(\mathtt{m})\leq \infty$ with $\mathtt{m}(0)=0$. Then $Y$ is called a quasi-diffusion (also called generalized diffusion or gap diffusion) with 0 being a regular boundary. In this case, it is known that $\Phi_Y$ is a Pick function, that is, a holomorphic function that preserves the upper half-plane, i.e.~$\Im(\Phi_Y(z))\geq 0$ for all $\Im(z)>0$. Moreover, recalling the L\'evy-Khintchin representation of $\Phi_Y$ as given in \eqref{eq:Levy_Khin_Phi}, then the L\'evy measure $\mu_Y$ admits a density $u_Y$ which is completely monotone, with
\begin{equation} \label{eq:mu_repre}
u_Y(r)=\int_0^{\infty}e^{-rq}\nu_Y(dq),
\end{equation}
for some $\nu_Y$ a measure satisfying $\int_0^{\infty}\frac{\nu_Y(dq)}{1+q}<\infty$, and $\delta_{Y}=\nu_Y(\{0\})$.

Indeed, let $\mathfrak{M}$ and $\mathfrak{P}$  denote the spaces of strings and Pick functions, respectively, then Krein's theory shows that there exists a bijection between $\mathfrak{M}$ and $\mathfrak{P}$, in the sense that for any Pick function $\Phi \in \mathfrak{P}$, there exists a quasi-diffusion $Y$ with generator $\frac{d}{d\mathtt{m}} \frac{d}{dx}$ for some $\mathtt{m}\in \mathfrak{M}$, such that $\Phi$ is the Laplace exponent of the inverse local time of $Y$. The converse also holds. Moreover, recalling that $Q_t^{\dag}$ is the semigroup of $Y$ killed at hitting 0, and let $\bfG^{\dag}$ denote its infinitesimal generator, defined as
\[\bfG^{\dag} f = \lim_{t\rightarrow 0}\frac{Q_t^{\dag}f-f}{t}\]
for $f$ in domain $\D(\bfG^{\dag})=\{f\in \Lg;\: \bfG^{\dag}f\in\Lg\}$. We also recall that a family of orthogonal projection operators $\ttE=(\ttE_q)_{q\in (-\infty,\infty)}$ on $\Lg$ is called a resolution of identity if for all $f \in \Lg$,
\begin{enumerate}
\item $\lim_{q\uparrow r}\ttE_q f = \ttE_r f$, i.e.~$\ttE_q$ is strongly left continuous for all $q\in (-\infty,\infty)$.
\item $\lim_{q\downarrow -\infty}\ttE_q f=0, \lim_{q\uparrow \infty}\ttE_qf=f$.
\item $\ttE_q \ttE_{r}f = \ttE_{\min(q,r)} f$.
\end{enumerate}
Note that since $\bfG^{\dag}$ is a self-adjoint operator, it generates a unique resolution of identity $\ttE^Y=(\ttE^Y_q)_{q\in (-\infty,\infty)}$, which can be represented by
\begin{equation}
\ttE^Y_q = \mathbf{1}_{(-\infty,q]}(\bfG^{\dag}).
\end{equation}
Finally, let $\sigma(\bfG^{\dag})$ represent the spectrum of $\bfG^{\dag}$,
then $Y$ (or its corresponding semigroup $Q$) satisfies \textit{the Krein's property}, which is defined as follows.
\begin{enumerate}
\item For any $f\in \rmL^2(m)$, $Q^{\dag}_tf$ admits the spectral expansion in $\rmL^2(m)$
\begin{equation} \label{eq:spec_Qdag}
Q^{\dag}_t f(x)=\int_{\sigma(\bfG^{\dag})}e^{-qt}d\ttE^Y_qf.
\end{equation}
\item For any $f,g \in \rmL^2(m)$, the signed measure $\left\langle d\ttE^Y_q f,g \right\rangle_{m}$ is absolutely continuous with respect to $\nu_Y(dq)$, the spectral measure of the Pick function $\Phi_Y$ as shown in \eqref{eq:Levy_Khin_Phi} and \eqref{eq:mu_repre},  and the Radon-Nikodym derivative between these two measures is given by
\begin{equation} \label{eq:defn_E}
\frac{\left\langle d\ttE^Y_q f,g \right\rangle_{m} }{\nu_Y(dq)}= (f,h_q)_{m}(g,h_q)_{m}
\end{equation}
for some function $h_q$.
\end{enumerate}


During the last decades, there have been a lot of nice developments of Krein's theory of strings, see e.g.~Kotani \cite{Kotani_1975} for a generalization of Krein's theory into the case of singular boundaries. However, these works are still in the framework of quasi-diffusion or differential operator. In what follows, we propose an extension of Krein's theory to general Markov semigroups. Since these linear operators are in general non-self-adjoint operators (neither normal), meaning that there is no spectral theorem available, we need to introduce this weaker notion of resolution of identity. First, fix some interval $[\alpha,\beta]$, $-\infty \leq \alpha < \beta \leq \infty$, we follow \cite{Burnap_Zweifel_1986} to define a \textit{non-self-adjoint resolution of identity} as a family of measure-valued operators $\ttE=(\ttE_q)_{q\in [\alpha,\beta]}: \D(\ttE)\rightarrow\Lnu$ which satisfies the following.
\begin{enumerate}[(i)]
\item $\D(\ttE)\subsetdense\Lnu$ and $\ttE_q\D(\ttE) \subseteq \D(\ttE)$ for all $q\in [\alpha,\beta]$.
\item $\ttE_{\alpha} f=0,\ttE_{\beta}f=f$ for all $f\in \D(\ttE)$.
\item $\ttE_q \ttE_{r}f = \ttE_{\min(q,r)} f$ for all $q,r\in [\alpha,\beta],f\in\D(\ttE)$.
\end{enumerate}
\begin{definition}
Suppose that $\{0\}$ is a regular point for $X$, then we say $X$ (or its corresponding semigroup $P$) satisfies the \textit{weak-Krein property} if  the following conditions hold.
\begin{enumerate}[(i)]
\item The L\'evy measure $\mu_X$ of $\Phi_X$ (the Laplace exponent of the inverse local time at 0) has a completely monotone density, which can be represented in the form \eqref{eq:mu_repre} for some measure $\nu_X$.
\item There exists a Borel set $C$ and  $\D(\ttE^X) \subsetdense \rmL^2(\frakm)$ such that on $\D(\ttE^X)$,
\begin{equation}\label{eq:spec_Pdag}
P^{\dag}_t = \int_{C}e^{-qt}d\ttE^X_q
\end{equation}
for any $t>0$, where $\ttE^X=(\ttE^X_q)_{q\in [\inf C,\sup C]}$ is a non-self-adjoint resolution of identity on $\D(\ttE^X)$.
\item $\left\langle d\ttE^X_qf,g\right\rangle_{\frakm}$ is absolutely continuous with respect to $\nu_X$ for any $f\in \D(\ttE^X),g\in \Lnu$.
\end{enumerate}
\end{definition}
Note that the weak-Krein property only requires the spectral expansion \eqref{eq:spec_Pdag} to hold on a dense subset of $\rmL^2(\frakm)$, which is distinguished from the Krein property for quasi-diffusions, where this expansion holds on the entire Hilbert space. Then we have the following corollary.
\begin{corollary} \label{cor:Krein}
Suppose that Theorem~\ref{thm}\eqref{it:intertwin_killed} holds, with $Q$ being the semigroup of a quasi-diffusion and $\Lambda \in \B(\Lg,\Lnu)$. Further assume that for any $q\in \sigma(\bfG^{\dag})$, $\ttE^Y_q g\in \D_{\Lambda}$ for all $g\in \D_{\Lambda}$, then $P$ has the weak-Krein property, with $C=\sigma(\bfG^{\dag})$.
\end{corollary}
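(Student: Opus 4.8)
The plan is to transport the full Krein property of the quasi-diffusion $Q$ over to $P$ by conjugating with the bounded intertwiner $\Lambda$, checking the three items of the weak-Krein property in turn. Item (i) is almost immediate: by Theorem~\ref{thm}, \eqref{it:intertwin_killed} implies \eqref{it:inve_local_time}, so $\Phi_X\equiv\Phi_Y$ on $(0,\infty)$; since $Y$ is a quasi-diffusion, the discussion in Section~\ref{sec:Krein_defn} preceding \eqref{eq:mu_repre} shows that $\mu_Y$ has a completely monotone density $u_Y(r)=\int_0^\infty e^{-rq}\,\nu_Y(dq)$. By uniqueness of the L\'evy-Khintchin data in \eqref{eq:Levy_Khin_Phi} we get $\mu_X=\mu_Y$, and uniqueness of the representation \eqref{eq:mu_repre} gives that $\mu_X$ has the completely monotone density $u_X=u_Y$, represented by $\nu_X:=\nu_Y$. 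This is item (i), and it already identifies the measure against which absolute continuity has to be tested in item (iii).

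For items (ii) and (iii) I would set, for $q\in[\inf C,\sup C]$ with $C=\sigma(\bfG^{\dag})$,
\[\ttE^X_q:=\Lambda\,\ttE^Y_q\,\Lambda^{-1}\qquad\text{on}\qquad \D(\ttE^X):=Ran(\Lambda).\]
This is well defined because $\Lambda$ is injective, so $\Lambda^{-1}$ maps $Ran(\Lambda)$ into $\D_{\Lambda}$, and by hypothesis $\ttE^Y_q\D_{\Lambda}\subseteq\D_{\Lambda}$, whence $\ttE^X_q h\in Ran(\Lambda)$ for $h\in Ran(\Lambda)$. Since $\Lambda\in\mathcal{C}(m,\frakm)$ we have $Ran(\Lambda)\subsetdense\Lnu$, and the three defining relations of a non-self-adjoint resolution of identity transfer verbatim from $\ttE^Y$: the boundary normalizations $\ttE^X_{\inf C}=0$ and $\ttE^X_{\sup C}=\mathrm{Id}$ follow from the corresponding left-continuous identities for $\ttE^Y$ on $\Lg$, while $\ttE^X_q\ttE^X_r=\Lambda\,\ttE^Y_q\ttE^Y_r\,\Lambda^{-1}=\Lambda\,\ttE^Y_{\min(q,r)}\,\Lambda^{-1}=\ttE^X_{\min(q,r)}$ using $\Lambda^{-1}\Lambda=\mathrm{Id}$ on $\D_{\Lambda}$.

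For the spectral expansion, fix $h=\Lambda f\in\D(\ttE^X)$. Theorem~\ref{thm}\eqref{it:intertwin_killed} together with the Krein expansion \eqref{eq:spec_Qdag} for $Q$ gives
\[P^{\dag}_t h=\Lambda Q^{\dag}_t f=\Lambda\int_{\sigma(\bfG^{\dag})}e^{-qt}\,d\ttE^Y_q f=\int_{\sigma(\bfG^{\dag})}e^{-qt}\,d\ttE^X_q h,\]
where the last equality uses that the bounded operator $\Lambda$ commutes with the norm-convergent Riemann--Stieltjes integral and carries the finite-variation path $q\mapsto\ttE^Y_q f$ in $\Lg$ to the finite-variation path $q\mapsto\ttE^X_q h$ in $\Lnu$; this is \eqref{eq:spec_Pdag} with $C=\sigma(\bfG^{\dag})$. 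For absolute continuity, fix also $k\in\Lnu$. Since $\Lambda$ is bounded, $\hatLambda\in\B(\Lnu,\Lg)$, so $\langle\ttE^X_q h,k\rangle_{\frakm}=\langle\Lambda\,\ttE^Y_q f,k\rangle_{\frakm}=\langle\ttE^Y_q f,\hatLambda k\rangle_{m}$ for every $q$; comparing distribution functions yields $\langle d\ttE^X_q h,k\rangle_{\frakm}=\langle d\ttE^Y_q f,\hatLambda k\rangle_{m}$, and by \eqref{eq:defn_E} for $Q$ the right-hand side equals $(f,h_q)_{m}(\hatLambda k,h_q)_{m}\,\nu_Y(dq)$, hence is absolutely continuous with respect to $\nu_Y=\nu_X$. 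This gives item (iii), and the weak-Krein property for $P$ follows with $C=\sigma(\bfG^{\dag})$.

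The steps I expect to require the most care are the two bookkeeping points around $\Lambda^{-1}$, which is unbounded in general: establishing that $\D(\ttE^X)=Ran(\Lambda)$ is simultaneously dense in $\Lnu$ and invariant under every $\ttE^X_q$ — precisely the role of the hypothesis $\ttE^Y_q\D_{\Lambda}\subseteq\D_{\Lambda}$ — and justifying the interchange of $\Lambda$ with the Stieltjes integral in the spectral expansion, along with the transfer of the finite-variation property from $\ttE^Y$ to $\ttE^X$. Both are routine given $\Lambda\in\B(\Lg,\Lnu)$, but they are what make the conjugation scheme work, so they should be written out carefully rather than waved through.
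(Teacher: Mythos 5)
Your construction coincides with the paper's: item (i) via $\Phi_X=\Phi_Y$ and $\nu_X:=\nu_Y$, then $\ttE^X_q=\Lambda\ttE^Y_q\Lambda^{-1}$ on $\D(\ttE^X)=Ran(\Lambda)$, with the hypothesis $\ttE^Y_q\D_{\Lambda}\subseteq\D_{\Lambda}$ giving invariance, boundedness of $\Lambda$ giving the endpoint normalizations and the multiplicative property, the intertwining $P^{\dag}_t\Lambda=\Lambda Q^{\dag}_t$ transferring the expansion \eqref{eq:spec_Qdag}, and the adjoint identity $\langle d\ttE^X_q h,k\rangle_{\frakm}=\langle d\ttE^Y_q f,\hatLambda k\rangle_{m}=(f,h_q)_m(\hatLambda k,h_q)_m\,\nu_Y(dq)$ giving item (iii). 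So in outline this is the same proof, and the conclusion is reached correctly.

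The one justification that does not stand as written is the claim that $q\mapsto\ttE^Y_qf$ is a finite-variation path in $\Lg$. For a resolution of identity the norm-variation $\sum_k\|\ttE^Y(\Delta_k)f\|$ is in general unbounded; only the quadratic quantity $\sum_k\|\ttE^Y(\Delta_k)f\|^2=\|f\|^2$ is controlled, so you cannot appeal to bounded variation of the vector-valued path to pull $\Lambda$ through the integral. The interchange is still legitimate, but for a different reason: $\int e^{-qt}\,d\ttE^Y_qf$ is a norm limit of Riemann--Stieltjes sums and a bounded operator commutes with norm limits; alternatively one can argue weakly, as the paper does, by showing that the scalar functions $q\mapsto\langle\ttE^X_qf,g\rangle_{\frakm}$ have bounded variation --- Cauchy--Schwarz together with the orthogonality of the increments $\ttE^Y(\Delta_k)$ bounds the variation by $\|\hatLambda g\|_{m}\,\|\Lambda^{-1}f\|_{m}$, finite for each fixed $f\in Ran(\Lambda)$ --- and then identifying $\int e^{-qt}\,d\langle\ttE^X_qf,g\rangle_{\frakm}$ with $\langle P^{\dag}_tf,g\rangle_{\frakm}$ through $\hatLambda$ and the Krein expansion of $Q^{\dag}$. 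This scalar bounded-variation point is also what makes the measures $\langle d\ttE^X_qf,g\rangle_{\frakm}$ in item (iii) well defined; in your write-up they are rescued by your identification with the $\ttE^Y$-side Stieltjes measures, which is fine, but the finite-variation claim for the $\Lg$-valued path should be removed or replaced by one of these arguments.
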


\section{Proof of Theorem~\ref{thm} and its corollaries}
\subsection{Proof of Theorem~\ref{thm}}
We start the proof with the following results, which may of independent interest.
\begin{lemma}\label{lemma}
Assume that  \eqref{it:intertwin_killed} (resp.~\eqref{it:intertwin_refl}) holds, then for any $f\in \D_{\Lambda}$ and $q>0$, we have
\begin{eqnarray}
U^{\dag}_q\Lambda f &=& \Lambda V^{\dag}_qf.  \label{eq:intertwin_Udag}\\
(\textnormal{resp.~}U_q\Lambda f &=& \Lambda V_qf. \label{eq:intertwin_U} )
\end{eqnarray}
\end{lemma}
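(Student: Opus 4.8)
The plan is to deduce the resolvent identities from the semigroup ones by writing each resolvent as the Laplace transform of the corresponding semigroup and then pushing $\Lambda$ through the time integral. Fix $f\in\D_{\Lambda}$ and $q>0$, and assume first that \eqref{it:intertwin_killed} holds. Since $\frakm$ (resp.\ $m$) is excessive, $P^{\dag}$ (resp.\ $Q^{\dag}$) extends to a strongly continuous, contractive semigroup on $\Lnu$ (resp.\ $\Lg$), so both
\[ V^{\dag}_q f=\int_0^{\infty}e^{-qt}Q^{\dag}_t f\,dt \qquad\text{and}\qquad U^{\dag}_q\Lambda f=\int_0^{\infty}e^{-qt}P^{\dag}_t\Lambda f\,dt \]
exist as norm-convergent improper Bochner integrals, and each is the norm limit of Riemann sums taken over expanding intervals with vanishing mesh. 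Note also that the very statement of \eqref{it:intertwin_killed} — that $\Lambda Q^{\dag}_t f$ is defined and equals $P^{\dag}_t\Lambda f$ — forces $Q^{\dag}_t f\in\D_{\Lambda}$ for every $t\ge 0$.

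Concretely, I would take the partitions $t^{(n)}_i=i/n$, $0\le i\le n^2$, and set $S_n=\frac1n\sum_{i=0}^{n^2}e^{-qt^{(n)}_i}Q^{\dag}_{t^{(n)}_i}f$. Each $S_n$ is a finite linear combination of elements of $\D_{\Lambda}$, hence $S_n\in\D_{\Lambda}$, and by linearity of $\Lambda$ together with \eqref{it:intertwin_killed},
\[ \Lambda S_n=\frac1n\sum_{i=0}^{n^2}e^{-qt^{(n)}_i}\,\Lambda Q^{\dag}_{t^{(n)}_i}f=\frac1n\sum_{i=0}^{n^2}e^{-qt^{(n)}_i}\,P^{\dag}_{t^{(n)}_i}\Lambda f . \]
By the Riemann-sum approximation recalled above, $S_n\to V^{\dag}_q f$ in $\Lg$ while $\Lambda S_n\to U^{\dag}_q\Lambda f$ in $\Lnu$. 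Since $\Lambda$ is closed, this yields $V^{\dag}_q f\in\D_{\Lambda}$ and $\Lambda V^{\dag}_q f=U^{\dag}_q\Lambda f$, which is \eqref{eq:intertwin_Udag}. The reflected case \eqref{eq:intertwin_U} is obtained verbatim after deleting every dagger, using that $P$ and $Q$ likewise extend to strongly continuous contraction semigroups on $\Lnu$ and $\Lg$.

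The only genuinely delicate point is that $\Lambda$ is merely closed and densely defined, not bounded, so one cannot pass $\Lambda$ under the integral sign directly; the discretization into Riemann sums is exactly what lets the closed-graph property do this, and the convergences it requires are the routine ones coming from strong continuity, the contraction bounds $\|Q^{\dag}_t\|,\|P^{\dag}_t\|\le 1$, and the integrable weight $e^{-qt}$. An essentially equivalent but discretization-free alternative is to pair against the adjoint: for $g\in\D_{\hatLambda}$, Fubini together with the duality identity $\langle P^{\dag}_t\Lambda f,g\rangle_{\frakm}=\langle\Lambda Q^{\dag}_t f,g\rangle_{\frakm}=\langle Q^{\dag}_t f,\hatLambda g\rangle_{m}$ gives $\langle U^{\dag}_q\Lambda f,g\rangle_{\frakm}=\langle V^{\dag}_q f,\hatLambda g\rangle_{m}$ for all such $g$; since $\Lambda=(\hatLambda)^{*}$ because $\Lambda$ is closed and densely defined, this again forces $V^{\dag}_q f\in\D_{\Lambda}$ with $\Lambda V^{\dag}_q f=U^{\dag}_q\Lambda f$. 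I would present the Riemann-sum argument as the main one, since it is the most self-contained.
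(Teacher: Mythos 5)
Your proof is correct and takes essentially the same route as the paper: both express the resolvents as Laplace transforms of the semigroups, push $\Lambda$ through the truncated time integral, and use the closedness of $\Lambda$ to pass to the limit $n\to\infty$. Your Riemann-sum discretization simply makes explicit the interchange $\int_0^n e^{-qt}\Lambda Q_t f\,dt=\Lambda\int_0^n e^{-qt}Q_t f\,dt$ that the paper's proof asserts without comment.
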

\begin{proof}
First, assuming that \eqref{it:intertwin_refl} holds and let us define for any $n>0$, $U_q^nf=\int_0^{n}e^{-qt}P_tfdt$ and $V_q^nf=\int_0^n e^{-qt}Q_tfdt$, then by the intertwining relation, we have, for $f\in \D_{\Lambda}$,
\[U_q^n \Lambda f=\int_0^{n}e^{-qt}P_t\Lambda fdt = \int_0^{n}e^{-qt}\Lambda Q_t fdt= \Lambda \int_0^n e^{-qt}Q_tfdt=\Lambda V_q^nf.\]
However, note that $\lim_{n\rightarrow\infty }V_q^n f= V_qf$ in $\rmL^2(m)$, and $\lim_{n\rightarrow\infty} \Lambda V_q^nf =\lim_{n\rightarrow\infty} U_q^n\Lambda f = U_q\Lambda f$ in $\rmL^2(\frakm)$, then by the closeness property of $\Lambda$, we have
\[\Lambda V_qf = U_q\Lambda f.\]
Similar arguments hold under assumption \eqref{it:intertwin_killed} and this completes the proof.
\end{proof}

\begin{lemma} \label{lemma:phi_in_L2}
For each $q>0$, we have $\varphi_q^X,\hatphi_q\in \rmL^2(\frakm)$ and $\varphi_q^Y,\hatrho_q\in\rmL^2(m)$.
\end{lemma}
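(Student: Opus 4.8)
The plan is to realise each of the four functions as a fixed scalar multiple of a difference $R_qg-R^{\dag}_qg$ between a resolvent and its killed version, evaluated at one conveniently chosen test function $g$, and then to observe that such a difference automatically belongs to the relevant weighted $\rmL^2$-space. I will carry this out for $\varphi^X_q$ and $\hatphi_q$; the assertions $\varphi^Y_q,\hatrho_q\in\rmL^2(m)$ then follow verbatim after replacing $(P,X,\frakm)$ by $(Q,Y,m)$ and using the analogous resolvent identities for $Y$ and its moderate Markov dual $\hatY$.

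First I would record the soft inputs. Since $\frakm$ is excessive for $P$, a Jensen-inequality argument shows that each $P_t$ is a contraction on $\rmL^2(\frakm)$, so that (cf.~Section~\ref{sec:prelim}) $P$ extends to a strongly continuous contraction semigroup on $\rmL^2(\frakm)$ and $\|U_q\|_{\rmL^2(\frakm)\to\rmL^2(\frakm)}\le 1/q$; by the duality formula $(U_qf,g)_{\frakm}=(f,\hatU_qg)_{\frakm}$ the operator $\hatU_q$ is the $\rmL^2(\frakm)$-adjoint of $U_q$ and hence also bounded. Moreover $P^{\dag}_t\le P_t$ and $\hatP^{\dag}_t\le\hatP_t$ pointwise, so $0\le U^{\dag}_qh\le U_qh$ and $0\le\hatU^{\dag}_qh\le\hatU_qh$ for every $h\in\Bb^+(E)$. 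Using that $\frakm$ is $\sigma$-finite I would then fix $g\in\Bb^+(E)$ with $0<g\le 1$ and $g\in\rmL^1(\frakm)$; then $g\in\rmL^2(\frakm)$, whence $U_qg,\hatU_qg\in\rmL^2(\frakm)$, and by the domination above also $U^{\dag}_qg,\hatU^{\dag}_qg\in\rmL^2(\frakm)$. Applying \eqref{eq:strong_Markov} and \eqref{eq:moderate_Markov} to this $g$ gives
\[
\varphi^X_q(x)\,U_qg(b)=U_qg(x)-U^{\dag}_qg(x)\qquad\text{and}\qquad\hatphi_q(x)\,\hatU_qg(b)=\hatU_qg(x)-\hatU^{\dag}_qg(x),
\]
the second identity holding $\frakm$-a.e.\ on $E$; dividing by the scalars $U_qg(b)$ and $\hatU_qg(b)$ would then exhibit $\varphi^X_q$ and $\hatphi_q$ as $\frakm$-a.e.\ equal to elements of $\rmL^2(\frakm)$, which is the claim — provided those scalars are finite and nonzero.

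Finiteness is immediate: as $g\le 1$ and $U_q,\hatU_q$ are sub-Markovian, both scalars are at most $1/q$. Positivity of $U_qg(b)=\E_b\!\left[\int_0^{\infty}e^{-qt}g(X_t)\,dt\right]$ is also clear, since $X$ has infinite lifetime and $g>0$ on $E$, so $g(X_t)>0$ $\Prob_b$-a.s.\ for every $t$. The genuinely delicate point — and the step I expect to be the main obstacle — is the strict positivity of the dual constant $\hatU_qg(b)$, because $\hatX$ is only moderate Markov, is determined merely modulo an $\frakm$-polar set, and need not be conservative. Here I would invoke $\widehat{\Prob}_b[T^{\hatX}_b=0]=1$, valid since $b$ is regular (\cite[Proposition (A.4)]{FG_excursion_theory}): this forces $\hatX$ to have strictly positive lifetime under $\widehat{\Prob}_b$ (otherwise $\hatX$ would sit at the cemetery at all positive times and could never return to $b$), so that, $g$ being strictly positive on $E$, $\hatU_qg(b)=\widehat{\E}_b\!\left[\int_0^{\infty}e^{-qt}g(\hatX_t)\,dt\right]>0$. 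With both constants placed in $(0,\infty)$, the proof is complete.
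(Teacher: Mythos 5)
Your argument is correct, but it follows a genuinely different route from the paper's. The paper works in one step from the Fitzsimmons--Getoor identities $(\mathbf{1}_E,\hatphi_q)_{\frakm}=\left(\delta_X+q(\varphi^X_q,\hatphi)_{\frakm}\right)U_q\mathbf{1}_E(b)$ and $(\mathbf{1}_E,\varphi^X_q)_{\frakm}=\left(\delta_X+q(\hatphi_q,\varphi^X)_{\frakm}\right)\hatU_q\mathbf{1}_E(b)$: since the first factor is $\Phi_X(q)<\infty$ and the resolvents at $b$ are bounded by $1/q$, it gets $\varphi^X_q,\hatphi_q\in\rmL^1(\frakm)$, and then $\rmL^2(\frakm)$ follows because both functions are bounded by $1$. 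That argument only needs \emph{upper} bounds on $U_q\mathbf{1}_E(b)$ and $\hatU_q\mathbf{1}_E(b)$, and it yields the extra information that the functions are $\frakm$-integrable with $(\mathbf{1}_E,\hatphi_q)_{\frakm}\le \Phi_X(q)/q$. You instead bypass the excursion-theoretic representation of $\Phi_X$ entirely: you pick a strictly positive $g\in\Bb^+(E)\cap\rmL^1(\frakm)$ (available by $\sigma$-finiteness), use the $\rmL^2(\frakm)$-boundedness of $U_q$ and $\hatU_q$ together with the pointwise dominations $0\le U^{\dag}_qg\le U_qg$ and $0\le\hatU^{\dag}_qg\le\hatU_qg$, and read off $\varphi^X_q$ and $\hatphi_q$ from \eqref{eq:strong_Markov} and \eqref{eq:moderate_Markov} as scalar multiples of differences of $\rmL^2(\frakm)$ functions. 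The price is that you must check the scalars $U_qg(b)$ and $\hatU_qg(b)$ are strictly positive, which the paper never needs; you correctly identify the dual constant as the delicate one and your justification via $\widehat{\Prob}_b[T^{\hatX}_b=0]=1$ (which forces a strictly positive lifetime for $\hatX$ under $\widehat{\Prob}_b$, hence a set of times of positive Lebesgue measure on which $g(\hatX_t)>0$) is sound. Your version is more elementary and self-contained given the preliminaries, but it does not recover the $\rmL^1(\frakm)$-membership that the paper's computation gives for free; the stated lemma, however, only asks for $\rmL^2$, so your proof establishes exactly what is claimed.
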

\begin{proof}
First, according to Fitzsimmons and Getoor \cite[Theorem (3.6)(ii)]{FG_excursion_theory}, we can write
\[(\mathbf{1}_E,\hatphi_q)_{\frakm}=(\delta_X+q(\varphi^X_q,\hatphi)_{\frakm})U_q \mathbf{1}_E(b).\]
Now since $qU_q \mathbf{1}_E(b)\leq 1$ and $\delta_X+q(\varphi^X_q,\hatphi)_{\frakm}=\Phi_X(q)<\infty$, we see that $(\mathbf{1}_E,\hatphi_q)_{\frakm}<\infty$ for each $q>0$, i.e.~$\hatphi_q \in \rmL^1(\frakm)$ since it is non-negative. Moreover, since $\hatphi_q(x)\leq 1$ for all $x$, we have
\[\int_0^{\infty} \left(\hatphi_q(x)\right)^2 \frakm(dx)\leq \int_0^{\infty} \hatphi_q(x) \frakm(dx) = (\mathbf{1}_E,\hatphi_q)_{\frakm}<\infty.\]
Therefore $\hatphi_q\in \rmL^2(\frakm)$. Similarly, we have
\[(\mathbf{1}_E,\varphi^X_q)_{\frakm}=(\delta_X+q(\hatphi_q,\varphi^X)_{\frakm})\hatU_q \mathbf{1}_E(b).\]
By \cite[Proposition (3.9)]{FG_excursion_theory}, $\delta_X+q(\hatphi_q,\varphi^X)_{\frakm}=\delta_X+q(\varphi^X_q,\hatphi)_{\frakm}<\infty$, while on the other hand $q\hatU_q \mathbf{1}_E(b)\leq 1$, hence $\varphi^X_q \in \rmL^1(\frakm)$ and also in $\rmL^2(\frakm)$ since it is bounded by 1. The same arguments apply for the proof for $\varphi^Y_q$ and $\hatrho_q$, and this completes the proof of this Lemma.
\end{proof}
\subsubsection{Proof of $\eqref{it:intertwin_killed} \Rightarrow \eqref{it:phi_rho}$} Note that for any $x\in E_{\dag}$ where we denote $E_b=E\backslash \{b\}$, we have $\Prob_x(T^X_b=0)=0$, hence since $X$ has an a.s.~infinite lifetime, we can rewrite $\varphi^X_q(x)$ using integration by parts, which yields
\begin{eqnarray*}
\varphi^X_q(x)&=& \int_0^{\infty}e^{-qt}\Prob_x(T^X_b\in dt) = \int_0^{\infty}qe^{-qt}\Prob_x(T^X_b\leq t)dt = 1-\int_0^{\infty}qe^{-qt}P^{\dag}_t \mathbf{1}_E(x)dt\\ &=&1-qU^{\dag}_q\mathbf{1}_E(x),
\end{eqnarray*}
where we used the fact that $P^{\dag}_t \mathbf{1}_E(x)=\Prob_x(T^X_b>t)$. On the other hand, since $b$ is regular for itself, we have $\varphi_q^X(b)=1$. Combining with the fact that $U_q^{\dag} \mathbf{1}_E(b)=0$, we see that for all $x\in E$,
\begin{equation}\label{eq:Udag_varphi}
\varphi^X_q(x)=(\mathbf{1}_E-qU^{\dag}_q\mathbf{1}_E)(x).
\end{equation}
Similarly, we have $\varphi^Y_q(x)=(\mathbf{1}_E-qV^{\dag}_q \mathbf{1}_E)(x)$. Furthermore, by recalling that $\Lambda \mathbf{1}_E=\mathbf{1}_E$ and applying Lemma~\ref{lemma}, we have
\begin{equation}
U^{\dag}_q \mathbf{1}_E(x)=U^{\dag}_q\Lambda \mathbf{1}_E(x)=\Lambda V^{\dag}_q \mathbf{1}_E(x).
\end{equation}
Combining the above results, we get that for any $q>0$ and $x\in E$,
\begin{eqnarray*}
\varphi^X_q(x)&=& (\mathbf{1}_E-qU^{\dag}_q\mathbf{1}_E)(x) = \Lambda \left(\mathbf{1}_E - qV^{\dag}_q \mathbf{1}_E\right)(x) = \Lambda \varphi^Y_q(x).
\end{eqnarray*}
Since we have shown $\varphi^Y_q\in \rmL^2(m)$, we also see that $\varphi^Y_q \in \D_{\Lambda}$. Next, by \eqref{it:intertwin_killed}, we deduce easily the series of identities that for any $f\in \D_{\Lambda}, g\in \D_{\hatLambda}$,
\begin{equation}\label{eq:dual_int}
\left\langle  f,\hatLambda \hatP^{\dag}_t g\right\rangle_{m}=\left\langle \Lambda f,\hatP_t^{\dag}g\right\rangle_{\frakm}=\left\langle P^{\dag}_t\Lambda f,g\right\rangle_{\frakm}=\left\langle \Lambda Q^{\dag}_t f,g\right\rangle_{\frakm}=\left\langle Q^{\dag}_t f,\hatLambda g\right\rangle_{\frakm}=\left\langle f,\hatQ^{\dag}_t \hatLambda g\right\rangle_{\frakm},
\end{equation}
which means that $\hatQ^{\dag}_t \hatLambda g- \hatLambda \hatP^{\dag}_t g \in \D_{\hatLambda}^{\perp} = \{0\}$ since $\overline{\D}_{\hatLambda}=\rmL^2(\frakm)$. Therefore, $\hatP^{\dag}$ and $\hatQ^{\dag}$ have the intertwining relation on $\D_{\hatLambda}$,
\[\hatLambda \hatP_t^{\dag} = \hatQ_t^{\dag} \hatLambda.\]
By \cite[Proposition (A.6)]{FG_excursion_theory}, we have $\widehat{\Prob}_y(T^{\hatY}_b=0)=0$ for all $y\in E_b\backslash S$ where $S$ is an $m$-semipolar set, which $m$ does not charge. On the other hand, since we are assuming that $\hatLambda$ is also mass preserving, we can use the same arguments as above to prove that $\hatLambda \hatphi_q(x)=\hatrho_q(x)$ for all $q>0$ and $x\in E_b\backslash S$. This completes the proof.

\subsubsection{Proof of $\eqref{it:phi_rho}\Rightarrow \eqref{it:inve_local_time}$}
Recall from  \cite[Theorem 3.6]{FG_excursion_theory} that under the normalization $c(\frakm)=1$, the Laplace exponent of the inverse local time can be written as
\begin{equation} \label{eq:defn_Phi}
\Phi_X(q)=\delta_X+q(\varphi^X_q,\hatphi)_{\frakm},
\end{equation}
where we recall that the notation $(\cdot,\cdot)_{\frakm}$ is given in \eqref{defn_bracket}, which means that $(\varphi^X_q,\hatphi)_{\frakm} < \infty$ for all $q>0$. Similarly, $(\varphi^Y_q,\hatrho)_{m}<0$ for all $q>0$. On the other hand, by Lemma~\ref{lemma:phi_in_L2}, we see that $\varphi^X_q,\hatphi_q\in \rmL^2(\frakm)$ and $\varphi^Y_q,\hatrho_q\in \rmL^2(m)$ for any $q>0$. Hence the assumption \eqref{it:phi_rho} implies that for any $q,r>0$,
\[\left\langle\varphi^X_q,\hatphi_r\right\rangle_{\frakm}=\left\langle\Lambda\varphi^Y_q,\hatphi_r\right\rangle_{\frakm}=\left\langle\varphi^Y_q,\hatLambda\hatphi_r\right\rangle{m}=\left\langle\varphi^Y_q,\hatrho_r\right\rangle_{m}.\]
Next, since plainly $\hatphi_r(x) \uparrow \hatphi(x)$ and $\hatrho_r(x) \uparrow \hatrho(x)$ pointwise as $r\downarrow 0$, we easily deduce by monotone convergence that
\[(\varphi^X_q,\hatphi)_{\frakm} = \lim_{r\downarrow 0} (\varphi^X_q,\hatphi_r)_{\frakm}=\lim_{r\downarrow 0} \left\langle\varphi^X_q,\hatphi_r\right\rangle_{\frakm} = \lim_{r\downarrow 0} \left\langle\varphi^Y_q,\hatrho_r\right\rangle_{m} = \lim_{r\downarrow 0} (\varphi^Y_q,\hatrho_r)_{m}= (\varphi^Y_q,\hatrho)_{m},\]
where we used the fact that $(f,g)_m=\left\langle f,g\right\rangle_m$ for any $f,g\in \rmL^2(m)$. 
Moreover, from \cite[Remark (3.21)]{FG_excursion_theory}, the killing term $\delta_X$ can be represented as
\begin{eqnarray*}
\delta_X&=&\lim_{q\rightarrow \infty}(\hatphi_q,\mathbf{1}_E-\varphi^X)_{\frakm}=\lim_{q\rightarrow\infty}(\hatphi_q,\Lambda(\mathbf{1}_E-\varphi^Y))_{\frakm}\\&=&\lim_{q\rightarrow\infty}(\hatLambda\hatphi_q,\mathbf{1}_E-\varphi^Y)_{m}=\lim_{q\rightarrow\infty}(\hatrho_q,\mathbf{1}_E-\varphi^Y)_{m}=\delta_Y.
\end{eqnarray*}
Therefore, combining the above results yields
\begin{equation*}
\Phi_X(q)=\delta_X+q(\varphi^X_q,\hatphi)_{\frakm}= \delta_Y+q(\varphi^Y_q,\hatrho)_{m} =  \Phi_Y(q),
\end{equation*}
where we consider again the normalization $c(\frakm)=c(m)=1$. This finishes the proof of $\eqref{it:phi_rho}\Rightarrow \eqref{it:inve_local_time}$.

\subsubsection{Proof of $\eqref{it:intertwin_killed}\Rightarrow \eqref{it:intertwin_refl}$} By \cite[Theorem 3.6 (ii)]{FG_excursion_theory}, for any $f\in \rmL^2(\frakm)$ and $q>0$, $U_qf(b)$ can be written as
\[U_qf(b)=\frac{( f,\hatphi_q)_{\frakm}}{\Phi_X(q)}=\frac{\left\langle f,\hatphi_q\right\rangle_{\frakm}}{\Phi_X(q)},\]
where the second identity comes from Lemma~\ref{lemma:phi_in_L2}. Since we have proved $\eqref{it:intertwin_killed}\Rightarrow \eqref{it:inve_local_time}$ (resp.~$\eqref{it:intertwin_killed}\Rightarrow \eqref{it:phi_rho}$), which means that $\Phi_X=\Phi_Y$ (resp.~$\hatLambda\hatphi_q=\hatrho_q$ $m$-a.e.), we deduce that, for $f\in \D_{\Lambda}$,
\begin{equation}\label{eq:UqVqf0}
U_q\Lambda f(b)= \frac{\left\langle\Lambda f,\hatphi_q\right\rangle_{\frakm}}{\Phi_X(q)}= \frac{\left\langle f,\hatLambda\hatphi_q\right\rangle_{m}}{\Phi_Y(q)}=\frac{\left\langle f,\hatrho_q\right\rangle_{m}}{\Phi_Y(q)}= V_q f(b).
\end{equation}
Furthermore, by \eqref{it:intertwin_killed}, we have $U^{\dag}_q\Lambda f = \Lambda V^{\dag}_q f$, hence the strong Markov property \eqref{eq:strong_Markov} yields that for any $x\in E_b$,
\begin{eqnarray}
U_q\Lambda f=U^{\dag}_q\Lambda f+U_q\Lambda f(b)\varphi^X_q = \Lambda \left(V^{\dag}_qf+ V_qf(b)\varphi^Y_q\right)=\Lambda  V_qf,
\end{eqnarray}
which proves that $P_t\Lambda = \Lambda Q_t$ on $\D_{\Lambda}$ and this completes the proof.

\subsubsection{Proof of $\eqref{it:intertwin_refl} \Rightarrow \eqref{it:phi_rho}$} Now let us further assume condition \eqref{cond:2imply1} for $\Lambda$ and $\hatLambda$. We start by recalling from \cite[Theorem 1]{Rogers_1983} that for any $f\in \rmL^2(\frakm)\cup \{\mathbf{1}_E\}$,
\begin{equation}\label{eq:decomp_Uqfb}
U_qf(b)=\frac{\bfU_q(f)+\gamma_Xf(b)}{\delta_X+q\bfU_q(\mathbf{1}_E)+q\gamma_X}.
\end{equation}
To this end, we will split the proof into three cases, depending on the value of $\delta_X$ and $\gamma_X$.

\textbf{Case 1. $\delta_X>0$.} Let us take $f=\mathbf{1}_E$, then under the condition $\Lambda \mathbf{1}_E=\mathbf{1}_E$,  we combine \eqref{eq:strong_Markov} and \eqref{eq:Udag_varphi} to get, for any $x\in E$,
\begin{eqnarray}\label{eq:Uq1}
U_q\Lambda \mathbf{1}_E(x)&=&U_q\mathbf{1}_E(x)=U^{\dag}_q\mathbf{1}_E(x)+\varphi^X_q(x)U_q\mathbf{1}_E(b)=\frac1q-\frac{\varphi^X_q(x)}{q}+\varphi^X_q(x)U_q\mathbf{1}_E(b)\nonumber \\&=&\frac1q+\left(U_q\mathbf{1}_E(b)-\frac1q\right)\varphi^X_q(x).
\end{eqnarray}
Note that $V_q$ satisfies similar identities as \eqref{eq:strong_Markov} and \eqref{eq:Uq1}, hence by linearity of $\Lambda$, we have
\begin{align*}
\Lambda V_q\mathbf{1}_E(x)=\frac1q+\left(V_q\mathbf{1}_E(b)-\frac1q\right)\Lambda\varphi^Y_q(x).
\end{align*}
Since $U_q\Lambda f = \Lambda V_qf$ by Lemma~\ref{lemma}, we have
\begin{equation}\label{eq:U10_qinverse}
\left(U_q\mathbf{1}_E(b)-\frac1q\right)\varphi^X_q(x)=\left(V_q\mathbf{1}_E(b)-\frac1q\right)\Lambda\varphi^Y_q(x).
\end{equation}
Moreover, by taking $f=\mathbf{1}_E$ in \eqref{eq:decomp_Uqfb}, we see that, under the assumption $\delta_X>0$,
\[U_q\mathbf{1}_E(b)-\frac1q = \frac{\bfU_q (\mathbf{1}_E)+\gamma_X}{\delta_X+q\bfU_q(\mathbf{1}_E)+q\gamma_X}-\frac1q=-\frac{q^{-1}\delta_X}{\delta_X+q \bfU_q(\mathbf{1}_E)+q\gamma_X}<0.\]
On the other hand, using the intertwining relation \eqref{it:intertwin_refl} and the assumptions that $\Lambda Q_tf(b)=Q_tf(b)$,$\Lambda \mathbf{1}_E\equiv \mathbf{1}_E$, we have
\[U_q\mathbf{1}_E(b)=U_q\Lambda \mathbf{1}_E(b)=\Lambda V_q\mathbf{1}_E(b)= V_q\mathbf{1}_E(b),\]
which is a strictly less than $\frac1q$ if $\delta_X>0$. Therefore we can easily conclude from \eqref{eq:U10_qinverse} that $\varphi^X_q(x)=\Lambda \varphi^Y_q(x)$. The dual argument $\hatrho_q(x)=\hatLambda\hatphi_q(x)$ on $E_b\backslash S$ is proved similarly using the dual intertwining relation $\hatLambda \hatP_t = \hatQ_t\hatLambda$, which can be shown via similar methods as \eqref{eq:dual_int}, and the Markov property equation \eqref{eq:moderate_Markov} for $\hatU_q$ and $\hatV_q$.

\textbf{Case 2. $\delta_X=0,\gamma_X>0$.} Since $b$ is regular, we have that $U_q^{\dag}\mathbf{1}_{\{b\}}(x) =0$ for any $x\in E$, and therefore
\begin{equation}\label{eq:UqIb}
U_q\mathbf{1}_{\{b\}}(x)=\varphi_q^X(x)U_q\mathbf{1}_{\{b\}}(b) .
\end{equation}
Recalling the condition $\Lambda \mathbf{1}_{\{b\}} \equiv \mathbf{1}_{\{b\}}$, we therefore have
\begin{eqnarray*}
\varphi^X_q(x)U_q\mathbf{1}_{\{b\}}(b) = U_q \mathbf{1}_{\{b\}}(x) = U_q\Lambda \mathbf{1}_{\{b\}}(x) = \Lambda V_q \mathbf{1}_{\{b\}}(x)=V_q\mathbf{1}_{\{b\}}(b)\Lambda\varphi^Y_q(x),
\end{eqnarray*}
where for the last identity we used a similar argument as in \eqref{eq:UqIb} for $V_q$. Moreover, taking $f=\mathbf{1}_{\{b\}}$ in \eqref{eq:decomp_Uqfb} with $\delta_X=0$, we have
\[U_q\mathbf{1}_{\{b\}}(b) = \frac{\bfU_q (\mathbf{1}_{\{b\}})+\gamma_X\mathbf{1}_{\{b\}}(b)}{q\bfU_q(\mathbf{1}_E)+q\gamma_X}=\frac{\gamma_X}{q\bfU_q(\mathbf{1}_E)+q\gamma_X}>0.\]
Moreover, the assumption $\Lambda Q_t(b)=Q_tf(b)$ yields that
\[U_q\mathbf{1}_{\{b\}}(b)=U_q\Lambda\mathbf{1}_{\{b\}}(b)=\Lambda V_q\mathbf{1}_{\{b\}}(b)= V_q\mathbf{1}_{\{b\}}(b) >0,\]
therefore $\varphi_q^X(x)=\Lambda \varphi_q^Y(x)$. We can prove $\hatrho_q(x)=\hatLambda\hatphi_q(x)$ on $E_b\backslash S$ using similar techniques with the dual intertwining relation $\hatLambda \hatP_t = \hatQ_t\hatLambda$ and identity \eqref{eq:moderate_Markov}.


\textbf{Case 3. $\delta_X=\gamma_X=0$.} Recall that $(\bfP_t)_{t>0}$ is the (Maisonneuve) entrance law of $P^{\dag}$, and define $\tbfQ_t$ be such that $\tbfQ_t (f)=\bfP_t(\Lambda f)$. Our aim is to show that $\tbfQ_t$ is indeed the Maisonneuve entrance law of $Q^{\dag}$. To this end, we define the measure $\tbfV_0$ on $E_b$ be such that
\[\tbfV_0 (f)=\int_0^{\infty}\tbfQ_s(f)ds.\]
Note that $\tbfV_0 (f) = \bfU_0(\Lambda f)$ as by definition, $\bfU_0 (f)=\int_0^{\infty}\bfP_s(f)ds$. Using the fact that $Q^{\dag}$ is the minimal semigroup, i.e.~$Q^{\dag}_tf\leq Q_tf$ for $f\geq 0$, and together with the intertwining relation \eqref{it:intertwin_refl}, we have for all $f\geq 0$,
\begin{equation} \label{eq:tbfV_bfU}
\tbfV_0( Q^{\dag}_tf) \leq \tbfV_0 (Q_tf)= \bfU_0 (\Lambda Q_tf)=\bfU_0( P_t\Lambda f).
\end{equation}
By \cite[Corollary 3.23]{FG_excursion_theory}, we can write $\bfU_0=\hatphi \frakm|_{E_b}$. Moreover, it is well-known that $\hatphi$ is an excessive function of $\hatP$, hence for any $f\in \rmL^2(\frakm)$,
\[\hatphi \frakm P_tf =(\hatphi,P_tf)_{\frakm}=(\hatP_t\hatphi,f)_{\frakm}\leq (\hatphi,f)_{\frakm}.\]
In other words, the measure $\hatphi \frakm$ is an excessive measure for $P$. However, since we are under the case $\gamma_X=0$, which means $\{b\}$ is a null set for $\frakm$, we see from \eqref{eq:tbfV_bfU} that, for $f\geq 0$,
\[\tbfV_0 (Q^{\dag}_tf)\leq \bfU_0 (P_t\Lambda f) = \hatphi \frakm P_t\Lambda f \leq \hatphi \frakm\Lambda f = \bfU_0 (\Lambda f) = \tbfV_0 (f).\]
Moreover, $\tbfV_0 (Q^{\dag}_t f)  \rightarrow 0$ as $t\rightarrow \infty$, so $\tbfV_0$ is a purely excessive measure for $Q^{\dag}$. Hence by a standard result, see e.g.~\cite[Theorem 5.25]{Getoor_1990}, $\tbfV_0$ is the integral of a uniquely determined entrance law, therefore $\tbfQ_t$ is an entrance law of $Q^{\dag}$. Furthermore, let $\tbfV_q = \int_0^{\infty}e^{-qt}\tbfQ_tdt$, then by \cite{Rogers_1983}, the decomposition of resolvents yields
\[V_qf(b)=\Lambda V_qf(b)=U_q\Lambda f(b)=\frac{\bfU_q(\Lambda f)}{q\bfU_q (\mathbf{1}_{E\backslash\{b\}})}=\frac{\tbfV_q (f)}{q\tbfV_q (\mathbf{1}_{E\backslash\{b\}})},\]
where we used the fact that $\Lambda \mathbf{1}_{E\backslash\{b\}}=\Lambda(\mathbf{1}_E-\mathbf{1}_{\{b\}})=\mathbf{1}-\mathbf{1}_{\{b\}}=\mathbf{1}_{E\backslash\{b\}}$. Hence $\tbfQ_t$ is indeed the Maisonneuve entrance law of $Q^{\dag}$ and $\bfV_q\equiv \tbfV_q$. Finally, we use the relation $\bfV_q = \hatrho_q m|_{E_b}$, see \cite[(3.22)]{FG_excursion_theory}, to get that for any $q>0,f\in \rmL^2(m)\cap \Bb^+(E)$,
\[\left\langle\hatrho_q,f\right\rangle_{m}=\bfV_q (f)=\bfU_q(\Lambda f)=\left\langle\hatphi_q,\Lambda f\right\rangle_{\frakm}=\left\langle\hatLambda \hatphi_q,f\right\rangle_{m},\]
which yields $\hatrho_q(x) = \hatLambda\hatphi_q(x)$ $m$-a.e.~for all $q>0$. The dual relation works similarly.
\subsubsection{Proof of $\eqref{it:intertwin_refl}\Rightarrow \eqref{it:intertwin_killed}$}
Since \eqref{it:intertwin_refl} implies that $U_q\Lambda f=\Lambda  V_qf$, and we further have $U_q\Lambda f(b)=\Lambda V_qf(b)=V_qf(b)$ under the assumption $\Lambda Q_tf(b)=Q_tf(b)$ for all $f\in \D_{\Lambda}$, hence by simply reordering the strong Markov identity \eqref{eq:strong_Markov}, we have
\[U^{\dag}_q\Lambda f(x)=U_q\Lambda f(x)-U_q\Lambda f(b)\varphi^X_q(x)=\Lambda\left(V_qf(x)-V_qf(b)\varphi^Y_q(x)\right)=\Lambda V^{\dag}_qf(x),\]
where the second identity uses the fact that $\eqref{it:intertwin_refl} \Rightarrow\eqref{it:phi_rho}$. This proves the desired argument.

\subsection{Proof of corollaries}
\begin{proof}[Proof of Corollary~\ref{cor:robin}]
First, by Theorem~\ref{thm}, we have $\Phi_X(q)=\Phi_Y(q)$ and therefore,
\[\gamma_Y = \lim_{q\rightarrow \infty} \frac{\Phi_Y(q)}{q}= \lim_{q\rightarrow \infty} \frac{\Phi_X(q)}{q}=\gamma_X.\]
Moreover, recall that for all $f\in \rmL^2(\frakm)\cup \{\mathbf{1}_E\}$, $U_qf(b)$ can be expressed as \eqref{eq:decomp_Uqfb}, where $\gamma_X$ represents the stickiness of $X$ at point $b$, and similar expression holds for $V_qf(b)$. In other words, when $\gamma_X=\gamma_Y=0$, $b$ is a reflecting boundary for both $X$ and $Y$, hence both processes have a Neumann boundary condition at $b$. While when $\gamma_X=\gamma_Y>0$, both $X$ and $Y$ have a Robin boundary condition at $b$ and this completes the proof.
\end{proof}
\begin{remark}
If $\Lambda$ is a bounded operator with  $\D_{\Lambda}=\rmL^2(m)$, we can also prove this result via infinitesimal generators. In particular, let $\bfL$ (resp.~$\bfG$) denote the infinitesimal generator of $P$ (resp.~$Q$) in $\rmL^2(\frakm)$ (resp.~$\Lg$), and  $\D(\bfL)$ (resp.~$\D(\bfG)$) for its domain. Then for any $ f\in \D(\bfG)$, by the definition of infinitesimal generators, we have  $\lim_{t\rightarrow 0}\frac{Q_tf-f}{t}=\bfG f$ in $\rmL^2(m)$. On the other hand, since $\Lambda \in \B(\rmL^2(m),\rmL^2(\frakm))$, we see that for any sequence $t_n\rightarrow 0$ and $n,k \in \mathbb{N}$,
\[\left\|\Lambda \frac{Q_{t_n}f-f}{t_n}-\Lambda \frac{Q_{t_k}f-f}{t_k}\right\|_{\frakm} \leq |||\Lambda||| \left\|\frac{Q_{t_n}f-f}{t_n}-\frac{Q_{t_k}f-f}{t_k}\right\|_{m} \rightarrow 0,\]
which implies that $\left(\Lambda\frac{Q_{t_n}f-f}{t_n}\right)_{n\geq 0}$ is a Cauchy sequence in $\rmL^2(\frakm)$, and hence convergent. Since $\Lambda$ is also a closed operator, we have
\begin{eqnarray}
\Lambda \bfG f=\Lambda \lim_{t\rightarrow 0}\frac{Q_tf-f}{t}=\lim_{t\rightarrow 0}\frac{\Lambda Q_t f-\Lambda f}{t}=\lim_{t\rightarrow 0}\frac{P_t\Lambda f-\Lambda f}{t},
\end{eqnarray}
where the last identity comes from assumption \eqref{it:intertwin_refl}. Moreover, since $\Lambda$ maps $\rmL^2(m)$ to $\rmL^2(\frakm)$, we have $\Lambda \bfG f\in \rmL^2(\frakm)$ and therefore the right-hand side of the above equation converges in $\rmL^2(\frakm)$. Hence we conclude that $\Lambda f\in \D(\bfL)$ and $\bfL \Lambda f= \Lambda \bfG f$ on $\D(\bfG)$. As both $\bfL$ and $\bfG$ have Robin boundary condition at $b$ when $\gamma_X=\gamma_Y>0$, this completes the proof.
\end{remark}

\begin{proof}[Proof of Corollary~\ref{prop:excur}]
First, we combine the representation of $\Phi_X$ as in \eqref{eq:Levy_Khin_Phi} and the statement in Theorem~\ref{thm} to make the easy observation that
\begin{equation}\label{eq:mu_relation}
\mu_X(dy)= \mu_Y(dy).
\end{equation}
Hence by \cite[Corollary 2.22]{FG_excursion_theory}, we have
\[\bfP(T_b^X\in A)=\mu_X(A)= \mu_Y(A)= \bfQ(T_b^Y\in A).\]
Note that although the normalizing constants $c(\frakm)$ and $c(m)$ are not 1 in \cite{FG_excursion_theory}, this will not bring any issue because the Maisonneuve excursion measure $\bfP$ and $\bfQ$ are defined up to a multiplicative constant, i.e.~if $(\bfP,\frakl^X)$ is an exit system, then so is $(c^{-1}\bfP,c\frakl^X)$ for any $c>0$. To see this in more detail, we can simply replace $\frakl^X$ by $c(\frakm)\frakl^X$ and $\bfP$ by $\bfP/c(\frakm)$, and note that $\mu_X$ is also replaced by $\mu_X/c(\frakm)$. Similar arguments hold for process $\frakl^Y$ and $\bfQ$ as well, which proves the first item. Moreover, denoting $\overline{\mu}_X(c)=\mu_X(c,\infty)$ for any $c>0$, it is easy to see from \eqref{eq:mu_relation} that $\overline{\mu}_X(c) =  \overline{\mu}_Y(c)$ for any $c>0$. Therefore, by Bertoin \cite[Section IV.2 Lemma 1]{Bertoin_Levy}, for any $b\geq a$, we have
\begin{equation} \label{eq:dist_excur_length}
\Prob(l_X(a)>b) = \frac{\overline{\mu}_X(b)}{\overline{\mu}_X(a)}=\frac{\overline{\mu}_Y(b)}{\overline{\mu}_Y(a)}=\Prob(l_Y(a)>b),
\end{equation}
which proves the second item. Finally, for the last item, we simply apply \cite[Proposition 2.17]{FG_excursion_theory} to get, for any $x,q>0$, that
\[\E_x[e^{-q\zeta_X}]=\frac{\delta_X}{\Phi_X(q)}=\frac{ \delta_Y}{ \Phi_Y(q)}=\E_x[e^{-q\zeta_Y}], \]
Hence $\zeta_X$ and $\zeta_Y$ have the same distribution under $\Prob_x$ and this concludes the proof of this Proposition.
\end{proof}

\begin{proof}[Proof of Corollary~\ref{cor:Krein}]
Given the intertwining relation in \eqref{it:intertwin_killed}, by Theorem~\ref{thm}, we see that $\Phi_X = \Phi_Y$. Moreover, assuming that $Y$ is a quasi-diffusion, which means that $\mu_Y$ has an absolutely continuous density $u_Y$ which admits the representation \eqref{eq:mu_repre} for some measure $\nu_Y$, hence so does $\mu_X$ since we can simply take $\nu_X=\nu_Y$. On the other hand, since $Y$ has the Krein's property, $Q^{\dag}_t$ satisfies the expansion given in \eqref{eq:spec_Qdag}, and there exist functions $(h_q)_{q\in \sigma(\bfG^{\dag})}$ such that
\[\left\langle d\ttE^Y_qf,g\right\rangle_{m} = (f,h_q)_{m}(g,h_q)_{m} \nu_Y(dq),\]
for any $f,g\in \Lg$. Now let us define the family of operators $(\ttE^X_q)_{q\in \sigma(\bfG^{\dag})}$ as $\ttE^X_q  = \Lambda \ttE^Y_q \Lambda^{-1}$ on $\D(\ttE^X)=Ran(\Lambda)$. For any $f\in \D(\ttE^X)$, let $g=\Lambda^{-1}f\in \D_{\Lambda}$, and we observe the following.
\begin{enumerate}[(i)]
\item $\D(\ttE^X)=Ran(\Lambda)$ is assumed to be dense in $\Lnu$. Moreover, for any $q\in \sigma(\bfG^{\dag})$, we have $\ttE^Y_q g \in \D_{\Lambda}$ by assumption. Hence
\[\ttE^X_q f = \Lambda \ttE^Y_q \Lambda^{-1}f = \Lambda \ttE^Y_q g \in \D(\ttE^X),\]
i.e.~$\ttE^X_q \D(\ttE^X)\subseteq \D(\ttE^X)$.
\item Using the property of the resolution of identity $\ttE^Y$ and the boundedness of $\Lambda$, we have
\begin{eqnarray*}
\lim_{q\rightarrow\inf \sigma(\bfG^{\dag})}\ttE^X_{q}f &=& \lim_{q\rightarrow\inf \sigma(\bfG^{\dag})}\Lambda \ttE^Y_{q} \Lambda^{-1}f =\lim_{q\rightarrow\inf \sigma(\bfG^{\dag})}\Lambda \ttE^Y_{q} g =0,\\
\lim_{q\rightarrow\sup \sigma(\bfG^{\dag})}\ttE^X_{q}f &=& \lim_{q\rightarrow\sup \sigma(\bfG^{\dag})}\Lambda \ttE^Y_{q} \Lambda^{-1}f = \Lambda \Lambda^{-1}f = f.
\end{eqnarray*}
\item $\ttE^X_q\ttE^X_r f = \Lambda \ttE^Y_q\Lambda^{-1} \Lambda \ttE^Y_r\Lambda^{-1} f = \Lambda \ttE^Y_{\min(q,r)} \Lambda^{-1}f = \ttE^X_{\min(q,r)} f$ for any $q,r\in \sigma(\bfG^{\dag})$.

\end{enumerate}
Hence $\ttE^X$ is a non-self-adjoint resolution of identity. Next, let $(q_k)_{k=0}^n$ be a partition of $[\inf \sigma(\bfG^{\dag}),\sup \sigma(\bfG^{\dag})]$. Then for any $f\in \D(\ttE^X),g\in \rmL^2(\frakm)$, since $\ttE^Y(\Delta_k)=\ttE^Y_{q_k}-\ttE^Y_{q_{k-1}}$ is an orthogonal projection, we have
\begin{eqnarray*}
\sum_{k=1}^n \left| \left\langle [\ttE^X_{q_k}-\ttE^X_{q_{k-1}}] f,g\right\rangle \right|&=& \sum_{k=1}^n \left| \left\langle \ttE^Y(\Delta_k) \Lambda^{-1}f,\hatLambda g\right\rangle \right| \leq \|\hatLambda g\| \sum_{k=1}^n \|\ttE^Y(\Delta_k)\Lambda^{-1}f\|\\
&\leq & \|\hatLambda g\| \left(\sum_{k=1}^n\|\ttE^Y(\Delta_k)\Lambda^{-1}f\|^2\right)^{\frac12} =\|\hatLambda g\| \left(\sum_{k=1}^n\left\langle \ttE^Y(\Delta_k)\Lambda^{-1}f,\Lambda^{-1}f\right\rangle \right)^{\frac12}\\
&=&\|\hatLambda g\| \|\Lambda^{-1}f\| \leq \|\hatLambda g\| |||\Lambda^{-1}||| \|f\|
\end{eqnarray*}
since the series $\sum_{k=1}^n\left\langle \ttE^Y(\Delta_k)\Lambda^{-1}f,\Lambda^{-1}f\right\rangle$ is telescoping. Therefore, we see that $\left\langle \ttE^X_{\cdot} f,g\right\rangle$ is of bounded variation on $[\inf \sigma(\bfG^{\dag}),\sup \sigma(\bfG^{\dag})]$, and by Riesz representation theorem, there exists a unique operator $\tilde{P}^{\dag}_tf = \int_{\sigma(\bfG^{\dag})} e^{-qt}d\ttE^X_q f$ on $\D(\ttE^X)$. Then it is easy to see that for $f\in \D(\ttE^X)$, $g\in \Lnu$,
\begin{align*}
\left\langle \tilde{P}^{\dag}_tf,g\right\rangle_{\frakm}&=\int_0^{\infty}e^{-qt}d\left\langle \ttE^X_q f,g\right\rangle_{\frakm} = \int_0^{\infty}e^{-qt}d\left\langle \Lambda\ttE^Y_q \Lambda^{-1}f,g\right\rangle_{\frakm}=\int_0^{\infty}e^{-qt}d\left\langle \ttE^Y_q \Lambda^{-1}f,\hatLambda g\right\rangle_{m} \\
&= \left\langle Q_t^{\dag}\Lambda^{-1}f,\hatLambda g\right\rangle_{m} = \left\langle P_t^{\dag} \Lambda \Lambda^{-1}f,g\right\rangle_{\frakm}=\left\langle P_t^{\dag} f,g\right\rangle_{\frakm},
\end{align*}
which shows that indeed $P_t^{\dag}f = \tilde{P}_t^{\dag}f$ on $\D(\ttE^X)$. Moreover, for any $f\in \D(\ttE^X),g\in \rmL^2(\frakm)$,
\begin{align*}
\left\langle d\ttE^X_qf,g\right\rangle_{\frakm}&=\left\langle \Lambda d\ttE^Y_q \Lambda^{-1}f,g\right\rangle_{\frakm} = \left\langle d\ttE^Y_q \Lambda^{-1}f,\hatLambda g\right\rangle_{m}\\
& = (\Lambda^{-1}f,h_q)_{m}(\Lambda g,h_q)_{m}\nu_Y(dq)=(\Lambda^{-1}f,h_q)_{m}(\Lambda g,h_q)_{m}\nu_X(dq),
\end{align*}
which means that $\left\langle d\ttE^X_qf,g\right\rangle_{\frakm}$ is absolutely continuous with respect to $\nu_X$ and this shows that $X$ (or its semigroup $P$) also satisfies the weak-Krein property.
\end{proof}

\section{Reflected self-similar and generalized Laguerre semigroups}
The aim of this part is two-fold. On the one hand, we illustrate the main results of the previous sections by studying  two important classes of Markov processes, namely the spectrally negative positive self-similar Markov processes that were introduced by Lamperti \cite{lamperti} and their associated generalized Laguerre processes whose definition will be recalled below. We emphasize that these two classes have been studied intensively over the last two decades and appear in many recent studies in applied mathematics, such as random planar maps, fragmentation equation, biology, see e.g.~\cite{Bertoin_fragmentation}, \cite{Bertoin_Random_planar} and \cite{PS_spectral}. On the other hand, we also provide the spectral expansion of both the minimal and reflected semigroups associated to the generalized Laguerre processes. This complements the work of Patie and Savov \cite{PS_spectral} where such analysis has been carried out for the transient with infinite lifetime generalized Laguerre semigroups. From now on, we fix the Lusin space to be $(E,\mathcal{E})=(\R_+,\mathcal{B}(\R_+))$, the space of Borel sets on non-negative real numbers, and we set $b=0$. Next, we denote by  $\Ybar=(\Ybar_t)_{t\geq 0}$ the  squared Bessel process with parameter $-\theta$, with  $\theta\in(0,1)$, and write $\Qbar=(\Qbar_t)_{t\geq 0}$  its corresponding semigroup, i.e.~$\Qbar_tf(x)=\E_x[f(\Ybar_t)],f\in \Co, x,t\geq 0$. It is well known, see e.g.~\cite[Chapter IV.6]{BS_HandbookBM}, that $\Qbar$ is a Feller semigroup, whose infinitesimal generator is given by
\[\overline{\bfG}f(x)=x f^{''}(x)+(1-\theta)f^{'}(x),\quad x>0,\]
for $f\in \D(\overline{\bfG})=\{f\in \Co;\overline{\bfG} f\in \Co,f^{+}(0)=0\}$  where $f^{+}(x)=\lim_{h\downarrow 0}\frac{f(x+h)-f(x)}{s(x+h)-s(x)}$ is the right derivative of $f$ with respect to the scale function $s(x)=\int^xy^{\theta-1}e^ydy$. Note that $\Qbar$ possesses the so-called 1-self-similarity property, i.e.~for all $t,x,c>0$,
\[ \Qbar_t f(cx) = \Qbar_{c^{-1}t}\rmd_cf(x),\]
where $\rmd_cf(x)=f(cx)$.
Moreover, the measure $\mbar(x)dx=x^{-\theta}dx,x>0,$ is the unique excessive measure for $\Qbar$, and therefore $\Qbar$ admits a unique strongly continuous contraction extension on $\Lm$, also denoted by $\Qbar$ when there is no confusion. Furthermore, note that 0 is a regular reflecting boundary for $\Ybar$, hence we let $\Qbar^{\dag}=(\Qbar^{\dag}_t)_{t\geq 0}$ denote the $\Lm$-semigroup of the killed process $(\Ybar,T_0^{\Ybar})$ where $T_0^{\Ybar} = \inf\{t\geq 0; \Ybar_t=0\}$. Now let the process $Y=(Y_t)_{t\geq 0}$ be defined as
\begin{equation}\label{eq:YZ_relation}
Y_t=e^{-t}\Ybar_{e^{t}-1}, \quad t\geq 0,
\end{equation}
which is the (classical) Laguerre process of parameter $-\theta$, also known as the squared radial Ornstein-Uhlenbeck process with parameter $-\theta$. Its semigroup $Q=(Q_t)_{t\geq 0}$, which admits the representation
\begin{equation} \label{eq:QRrelation}
Q_t f=\Qbar_{e^t-1}\rmd_{e^{-t}} \circ f,
\end{equation}
is also a Feller semigroup in $\Co$ with infinitesimal generator given by
\begin{equation*}
\bfG f(x) = x f^{''}(x) + (1-\theta-x)f^{'}(x), \quad x>0,
\end{equation*}
with $\D(\bfG)=\{f\in \Co;\bfG f\in \Co,f^{+}(0)=0\}$. Moreover, $Q$ admits an invariant measure $m(x)dx$ with density given by
\begin{equation}
m(x)=\frac{x^{-\theta}e^{-x}}{\Gamma(1-\theta)},\quad x>0,
\end{equation}
which is the probability density of a Gamma random variable of parameter $1-\theta$, denoted by $G(1-\theta)$. Therefore, $Q$ admits a strongly continuous contraction extension on $\Lg$, also denoted by $Q$ when there is no confusion. It is well-known that $Q$ is self-adjoint in $\Lg$ with a spectral decomposition given in terms of the (classical) Laguerre polynomials, see e.g.~\cite[Section 2.7.3]{Bakry_2014}. We also let $Q^{\dag}=(Q^{\dag}_t)_{t\geq 0}$ be the $\Lg$-semigroup of the killed process $(Y,T_0^Y)$ since $0$ is also a reflecting boundary for $Y$.

We proceed by introducing two classes of Markov processes with jumps  which are  natural generalizations of the processes $\Ybar$ and $Y$ in the sense that they share the 1-self-similarity property of $\Ybar$  and the second class is constructed from the first one by means of the relation \eqref{eq:YZ_relation}. To this end, let $\xi=(\xi_t)_{t\geq 0}$ be a spectrally negative L\'evy process, which is possibly killed at a rate $\kappa \geq 0$, that is, killed at an independent exponential time with parameter $\kappa$. It is then well-known that $\xi$ can be characterized by its Laplace exponent $\psi:\C_+ = \{ z\in \C: \Re(z)\geq 0\} \rightarrow \C$, which is defined, for any $\Re(z)\geq 0$, by
\begin{equation} \label{eq:psi}
\psi(z) = \beta z+\frac{\sigma^2}{2}z^2-\int_{0}^{\infty} (e^{-zy}-1+zy\mathbf{1}_{|y|<1}) \Pi(dy) - \kappa,
\end{equation}
where $ \beta \in\R,\sigma\geq 0, \kappa \geq 0$, and $\Pi$ is a $\sigma$-finite measure satisfying $\int_0^{\infty}(y^2\wedge y)\Pi(dy)<\infty$. Note that the quadruplet $(\beta,\sigma,\Pi,\kappa)$ uniquely determines $\psi$ and therefore uniquely determines $\xi$. Furthermore, let
\begin{equation}
\mathbf{T}(t) = \inf\left\{ s>0;\int_0^s e^{\xi_r}dr > t \right\},
\end{equation}
and for an arbitrary $x>0$, define the process $\mathbf{\overline{X}}=(\mathbf{\overline{X}}_t)_{t\geq 0}$ by
\begin{equation} \label{eq:lamp}
\mathbf{\overline{X}}_t = xe^{\xi_{\mathbf{T}(tx^{-1})}}, \quad t\geq 0,
\end{equation}
where the above quantity is assumed to be 0 when $\mathbf{T}(tx^{-1}) = \infty$. According to Lamperti \cite{lamperti}, $\mathbf{\overline{X}}$ is a 1-self-similar Markov process,  and  its infinitesimal generator takes the form
\begin{eqnarray} \label{eq:igd}
\overline{\bfL} f(x) &=& \sigma^2 xf''(x)+\left(\beta+\sigma^2\right)f'(x)+\int_{0}^{\infty}\left(f(e^{-y}x)-f(x)+yxf'(x)\right) \frac{\Pi(dy)}{x}-\kappa f(x),
\end{eqnarray}
for at least functions $ f \in \D_{\bfL}=\{f_e(\cdot)=f(e^{\cdot})=\mathtt{C}^2([-\infty,\infty])\}$. Next, writing the set $\N=\{\psi \textnormal{ of the form \eqref{eq:psi}} \}$, the Lamperti transformation  \eqref{eq:lamp} enables to define a bijection between the
subspace of negative definite functions $\N$ and the 1-self-similar processes $\overline{\mathbf{X}}$. Moreover, when
\begin{eqnarray}
\psi \in \N_{\uparrow} &=& \{\psi\in\N;\: \beta\geq 0,\kappa=0\}
\end{eqnarray} then $\mathbf{\overline{X}}$ never reaches $0$ and has an a.s.~infinite lifetime. Otherwise,  if  $\psi \in \N\setminus \N_{\uparrow}$, then
0 is an absorbing point, which is reached  continuously if $\kappa=0$ and $\beta<0$ or by a  jump  if $\kappa>0$. In addition, according to Rivero \cite{rivero},  see also Fitzsimmons \cite{Fitzsimmons_06}, for each
\begin{eqnarray}
\psi \in \N_{\checkmark} &=& \left\{\psi\in\N;\:  \exists \: \theta\in(0,1) \textnormal{ such that $\psi(\theta)=0$ and }\int_{x>1} xe^{\theta x}\Pi(dx)<\infty\right\},
\end{eqnarray}
$\mathbf{\overline{X}}$ admits a unique recurrent extension that leaves a.s.~0 continuously, denoted by  $\Xbar=(\Xbar_t)_{t\geq 0}$. Its minimal process $\Xbar^{\dag}=(\Xbar^{\dag}_t)_{t\geq 0}=(\Xbar_t;0\leq t\leq T_0^{\Xbar})$ is equivalent to $\mathbf{\overline{X}}$, and 0 is a regular boundary for $\Xbar$. Let $\Pbar=(\Pbar_t)_{t\geq 0}$ and $\Pbar^{\dag}=(\Pbar^{\dag}_t)_{t\geq 0}$ denote the Feller semigroups of $\Xbar$ and $\Xbar^{\dag}$, respectively, i.e.~$\Pbar_t f(x)=\E_x[f(\Xbar_t)],\Pbar^{\dag}_t f(x)=\E_x[f(\Xbar_t),t<T_0^{\Xbar}],f\in \Co$. We also deduce from \cite[Lemma 3]{rivero} that $\mbar$ is, up to a multiplicative constant, the unique excessive measure for $\Pbar$ and also an excessive measure for $\Pbar^{\dag}$, hence both $\Pbar$ and $\Pbar^{\dag}$ can be uniquely extended to a strongly continuous contraction semigroup on $\Lm$, still using the same notations when there is no confusion.

Moreover, we define the process $X=(X_t)_{t\geq 0}$ by $X_t = e^{-t}\Xbar_{e^t-1}, t\geq 0$, which, by the self-similarity property of $\Xbar$ is a homogeneous Markov process and is called a  \textit{reflected generalized Laguerre process}, with 0 also being a regular boundary.  $X^{\dag}=(X^{\dag}_t)_{t\geq 0}$ stands for its minimal process, that is the one  killed  at the stopping time $T_0^X$. Note that, due to the deterministic and bijective transform between processes $X$ and $\Xbar$, $X$ can also be uniquely characterized by $\psi\in \N_{\checkmark}$. We further let $P=(P_t)_{t\geq 0}$ and $P^{\dag}=(P^{\dag}_t)_{t\geq 0}$ denote the Feller semigroups of $X$ and $X^{\dag}$, respectively. Then we easily get that
\begin{equation} \label{eq:PKrelation}
P_t f=\Pbar_{e^t-1}\rmd_{e^{-t}} \circ f,
\end{equation}
and the infinitesimal generator of $P$ is given, for $f\in \D_{\bfL}$, by
\begin{equation}
\bfL f(x) = \overline{\bfL}f(x)-xf'(x).
\end{equation}
We observe that $\Ybar$ and $Y$ are special instances of  $\Xbar$ and $X$ respectively, when $\kappa=0$ and $\Pi \equiv 0$ in \eqref{eq:igd}.
%
Before stating the main result of this section, we need to introduce a few additional objects. First, we recall that the Wiener-Hopf factorization for spectrally negative L\'evy processes, see e.g.~\cite{Kyprianou}, yields that the function $\phi$ defined by $\phi(u)=\frac{\psi(u)}{u-\theta}, u\geq0,$ is a Bernstein function, that is the Laplace exponent of a subordinator $\eta=(\eta_t)_{t\geq 0}$ (i.e.~a non-decreasing L\'evy process), see e.g.~\cite{Schilling_Berstein} for an excellent account of Bernstein functions. Then, for $f\in\Co$ we define the Markov multiplier $\Ip$ by
\begin{equation} \label{eq:defn_Ip}
\quad \Ip f(x)=\E[f(xI_{\phi})]
\end{equation}
where $I_{\phi}=\int_0^{\infty}e^{-\eta_t}dt$ is the so-called exponential functional of $\eta$, see e.g.~\cite{Patie-Savov-Exp} and the references therein for a recent account  on this variable. We are now ready to state the following.
\begin{theorem} \label{thm:gL_main}
For each $\psi \in \N_{\checkmark}$, the following statements hold.
\begin{enumerate}
\item \label{it:frakm} There exists a  positive random variable $\Vp$ whose law is absolutely continuous with a density denoted by  $\frakm$, and it is an invariant measure for the semigroup $P$. Moreover, the law of $\Vp$ is determined by its entire moments
\begin{equation} \label{eq:moment_Vp}
\M_{\Vp}(n+1)=\prod_{k=1}^n\frac{\psi(k)}{k}, \quad n\in \mathbb{N}.
\end{equation}
\item  \label{it:Ip}$\Ip\in \B(\Co)\cap \B(\Lm) \cap \B(\Lg, \Lnu)$ and has a dense range in both $\Lm$ and $\Lnu$. Furthermore, both $\Ip$ and $\hatLambda_{\phi}$ are mass-preserving and satisfy the condition \eqref{cond:2imply1}.
\item \label{it:intertwin_ss} For all $f\in \Lm$ (resp.~$f\in \Lg$), we have
\begin{equation} \label{eq:intertwin_refl_ss}
\Pbar_t\Ip f = \Ip \Qbar_tf \quad (resp.~P_t\Ip f = \Ip Q_tf),
\end{equation}
and consequently,
\begin{equation} \label{eq:intertwin_killed_ss}
\Pbar^{\dag}_t \Ip f= \Ip \Qbar^{\dag}_tf \quad (resp.~P^{\dag}_t \Ip f= \Ip Q^{\dag}_tf).
\end{equation}
\item \label{it:Phi} Under the normalization $c(\mbar)=c(\frakm)=c(m)=1$, we have for any $q>0$,
\begin{equation}
\Phi_{\Ybar}(q)=\Phi_{\Xbar}(q) = \frac{\Gamma(1-\theta)}{\Gamma(\theta)}2^{1-\theta}q^{\theta}, \quad \Phi_X(q)=\Phi_Y(q)=\frac{\theta \Gamma(q+\theta)}{\Gamma(1+\theta)\Gamma(q)}.
\end{equation}
\item \label{it:Krein} $\Xbar$ and $X$ satisfy the weak-Krein property.
\end{enumerate}
\end{theorem}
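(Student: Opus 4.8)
The plan is to derive everything from the Mellin--analytic theory of the exponential functional $I_{\phi}=\int_0^{\infty}e^{-\eta_t}dt$, adapting to the present recurrently-extended setting the machinery developed by Patie and Savov~\cite{PS_spectral,Patie-Savov-Exp}. Two facts will be used throughout: the Wiener--Hopf identity $\psi(u)=(u-\theta)\phi(u)$, built into the definition of $\phi$, and the Mellin transform $\E[I_{\phi}^{\,n}]=n!/\prod_{k=1}^{n}\phi(k)$ (more generally $\E[I_{\phi}^{s}]=\Gamma(1+s)/W_{\phi}(1+s)$ with $W_{\phi}(s+1)=\phi(s)W_{\phi}(s)$, $W_{\phi}(1)=1$). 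For \eqref{it:frakm} I would invoke the exponential-functional description of the invariant measure of a generalized Laguerre semigroup from \cite{PS_spectral}; it carries over from the transient family $\N_{\uparrow}$ to $\N_{\checkmark}$ essentially verbatim, since $\phi=\psi/(\cdot-\theta)$ remains a Bernstein function for every spectrally negative $\psi$ with $\psi(\theta)=0$, and since the resulting probability law (with moments \eqref{eq:moment_Vp}) is finite, so that $P$ is positive recurrent with invariant probability $\frakm$; absolute continuity of $\Vp$ and moment-determinacy of \eqref{eq:moment_Vp} (which grows at most like $C^{n}n!$, so Carleman applies) are obtained as there. Combining \eqref{eq:moment_Vp} with $\E[I_{\phi}^{\,n}]=n!/\prod_{k=1}^n\phi(k)$ and $\psi(k)/\phi(k)=k-\theta$ then yields the master factorisation
\begin{equation}\label{eq:sketch_factor}
\Vp\cdot I_{\phi}\eqindist G(1-\theta),\qquad \Vp\perp I_{\phi},
\end{equation}
since the two moment sequences multiply to $\prod_{k=1}^{n}(k-\theta)=\M_{G(1-\theta)}(n+1)$ and all three laws are moment-determinate.

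For \eqref{it:Ip}, boundedness of $\Ip$ on $\Co$ is trivial since $\Ip f$ is an average of $f$; boundedness on $\Lm$ follows from Jensen's inequality and a change of variables, with operator norm at most $\E[I_{\phi}^{\theta-1}]^{1/2}$; and boundedness $\Ip\in\B(\Lg,\Lnu)$ --- the analytically substantial point --- I would obtain by the weighted Mellin--Plancherel estimates of \cite{PS_spectral}, using the explicit Mellin transform of $I_{\phi}$ and the Mellin asymptotics of the densities $m$ and $\frakm$. Injectivity of $\Ip$ (so that $\Ip$ belongs to the relevant class $\mathcal C$) is immediate on taking Mellin transforms, as the Mellin transform of $I_{\phi}$ is zero-free on its strip of analyticity; density of $\mathrm{Ran}(\Ip)$ in $\Lm$ and $\Lnu$ follows dually from injectivity of the adjoint. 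Mass-preservation and condition \eqref{cond:2imply1} are then checked at $b=0$: $\Ip\mathbf 1_E\equiv\mathbf 1_E$ and $\Ip g(0)=g(0)$ for every $g$ are immediate, while a change of variables identifies the $\Lg/\Lnu$-adjoint as $\hatIp g(u)=\E\!\left[I_{\phi}^{-1}g(u/I_{\phi})\frakm(u/I_{\phi})\right]/m(u)$, so that $\hatIp\mathbf 1_E(u)$ is the density of $\Vp I_{\phi}$ divided by $m(u)$, hence $\equiv1$ precisely by \eqref{eq:sketch_factor}; the conditions on $\hatIp$ at $\{0\}$ follow from the same scaling structure. (For the self-similar pair, where both semigroups act on $\Lm$, the $\Lm$-adjoint of $\Ip$ maps $\mathbf 1_E$ to the constant $\E[I_{\phi}^{\theta-1}]$, which, by the normalisation freedom for intertwiners noted after Theorem~\ref{thm}, is all that is needed.)

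For \eqref{it:intertwin_ss} I would establish the intertwining first at the level of generators: exploiting $\psi(u)=(u-\theta)\phi(u)$ --- equivalently, testing against the power functions $x^{s}$, on which $\Ip$ acts as multiplication by $\E[I_{\phi}^{s}]$ while $\overline{\bfG}$ and $\overline{\bfL}$ act as explicit Mellin multipliers --- one gets $\overline{\bfL}\,\Ip f=\Ip\,\overline{\bfG}f$ on the core $\D_{\bfL}$. Since $\overline{\bfG}$ and $\overline{\bfL}$ generate Feller semigroups and $\Ip$ is bounded, a routine resolvent/uniqueness argument lifts this to $\Pbar_t\Ip f=\Ip\Qbar_t f$ on $\Co$, hence on $\Lm$; the Laguerre version $P_t\Ip f=\Ip Q_tf$ then follows from $P_tf=\Pbar_{e^t-1}\rmd_{e^{-t}}\circ f$, $Q_tf=\Qbar_{e^t-1}\rmd_{e^{-t}}\circ f$ and the fact that $\Ip$ commutes with the dilations $\rmd_c$. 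The killed identities \eqref{eq:intertwin_killed_ss} then follow from the reflected ones by the implication \eqref{it:intertwin_refl}$\Rightarrow$\eqref{it:intertwin_killed} of Theorem~\ref{thm}, whose extra hypotheses \eqref{cond:2imply1} were verified in \eqref{it:Ip}; and invariance of $\frakm$ for $P$ also drops out, since \eqref{eq:sketch_factor} reads on measures as $\frakm\Ip=m$, whence $\frakm P_t\Ip f=\frakm\Ip Q_tf=mQ_tf=mf=\frakm\Ip f$ on the dense set $\mathrm{Ran}(\Ip)$.

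Finally, for \eqref{it:Phi} and \eqref{it:Krein}: $\Ybar$ is (a time-change of) a squared Bessel process of dimension $2-2\theta\in(0,2)$, so its inverse local time at $0$ is a stable subordinator of index $\theta$ and $\Phi_{\Ybar}(q)=cq^{\theta}$, the constant $c=\tfrac{\Gamma(1-\theta)}{\Gamma(\theta)}2^{1-\theta}$ being fixed by the normalisation $c(\mbar)=1$ through the explicit resolvent of $\Ybar$ at $0$; transporting this computation through the deterministic time-and-scale change \eqref{eq:YZ_relation} --- writing the excursion entrance law of $Y$ in terms of that of $\Ybar$ just as $Q_tf=\Qbar_{e^t-1}\rmd_{e^{-t}}\circ f$ and substituting $u=e^t-1$ --- gives $\Phi_Y(q)=\tfrac{\theta\Gamma(q+\theta)}{\Gamma(1+\theta)\Gamma(q)}$ (also the classical inverse-local-time exponent of the squared radial Ornstein--Uhlenbeck process). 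Then $\Phi_X=\Phi_Y$ is immediate from \eqref{eq:intertwin_killed_ss} and Theorem~\ref{thm}\eqref{it:inve_local_time}, while $\Phi_{\Xbar}=\Phi_{\Ybar}$ follows the same way for the self-similar pair --- or, directly, from self-similarity of $\Xbar$ forcing $\Phi_{\Xbar}(q)=cq^{\theta}$ with the same $c$. For \eqref{it:Krein} one invokes Corollary~\ref{cor:Krein} twice, with $(\Pbar,\Qbar,\Ip)$ and with $(P,Q,\Ip)$: in each case $Q$ is the semigroup of a quasi-diffusion, \eqref{eq:intertwin_killed_ss} supplies the killed intertwining of Theorem~\ref{thm}\eqref{it:intertwin_killed}, $\Ip$ is bounded by \eqref{it:Ip}, and the requirement $\ttE^Y_q g\in\D_{\Ip}$ is vacuous since $\Ip$ has full domain. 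The one genuinely hard step is the boundedness $\Ip\in\B(\Lg,\Lnu)$ (and on $\Lm$): it rests on delicate weighted $L^2$/Mellin estimates tying the density of $I_{\phi}$ to the non-explicit invariant density $\frakm$, and is the technical heart inherited from \cite{PS_spectral}; a secondary nuisance is the bookkeeping in the generator intertwining and in the time-change identity for $\Phi_Y$.
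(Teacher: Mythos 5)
Your overall architecture (moments of $\Vp$, the factorization $G(1-\theta)\eqindist\Vp\times I_{\phi}$, mass-preservation checks, intertwining, then $\Phi_X=\Phi_Y$ via Theorem~\ref{thm} and the weak-Krein property via Corollary~\ref{cor:Krein}) matches the paper in outline, and your treatment of \eqref{it:Phi} and \eqref{it:Krein} is sound; in fact invoking Corollary~\ref{cor:Krein} for both pairs is cleaner than the paper's route through explicit spectral expansions, and is essentially forced for $\Xbar$, whose expansion the paper defers. But there are two genuine gaps. First, item \eqref{it:frakm} is asserted rather than proved: the claim that the description of the invariant law ``carries over essentially verbatim'' from \cite{PS_spectral} is not correct, because there $\psi\in\N_{\uparrow}$ and the process never reaches $0$, whereas here $P$ is built from Rivero's \emph{recurrent extension}, and the identification of its invariant law (equivalently of the law of $\Xbar_1$ under $\Prob_0$) is exactly the content to be established. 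The paper does this by combining Rivero's resolvent/entrance-law formula for the recurrent extension with the Bertoin--Yor expression for $\overline{U}_qp_z(0)$ and the functional equation for $\M_{I_{\psi_{\uparrow}}}$, which produces the recursion $\M_{\Vp}(z+1)/\M_{\Vp}(z)=\psi(z)/z$; nothing in the $\N_{\uparrow}$ theory substitutes for this step (alternatively one must cite Barczy--D\"oring). Since your factorization \eqref{eq:sketch_factor}, the mass-preservation of $\hatIp$, the invariance argument ``$\frakm\Ip=m$'', and the dense-range claims all feed on \eqref{eq:moment_Vp}, this gap propagates.

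Second, your route to \eqref{eq:intertwin_refl_ss} through generators does not go through as stated. The Mellin-multiplier computation on the powers $p_s$ (which is correct: $\overline{\bfL}\Ip p_s=\Ip\overline{\bfG}p_s$ follows from $\psi(s)=(s-\theta)\phi(s)$ and $\M_{I_\phi}(s+1)=s\M_{I_\phi}(s)/\phi(s)$) only sees the formal action of the operators away from the boundary, and the set $\D_{\bfL}$ is tied to the \emph{minimal} Lamperti process. The ``routine resolvent/uniqueness argument'' requires knowing that $\Ip$ maps a core of the reflected Bessel (resp.\ Laguerre) generator, with its Neumann-type condition at $0$, into the domain of the generator of the recurrent extension $\Pbar$ (resp.\ $P$); for general $\psi\in\N_{\checkmark}$ this domain has no explicit description (the boundary behaviour is encoded only through the excursion entrance law), and the formal expression cannot distinguish $\Pbar$ from $\Pbar^{\dag}$ or from other extensions. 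This is precisely the difficulty the paper avoids by instead applying the Carmona--Petit--Yor criterion on $\Co$: the factorization \eqref{eq:decompos_gamma} plus injectivity of the kernel $\mathcal{V}_{\psi}f(x)=\E[f(x\Vp)]$ (via non-vanishing of $\M_{\Vp}$ on $1+i\R$ and Wiener's theorem), followed by a density extension to $\Lm$ and $\Lg$, and only then Theorem~\ref{thm} to pass to the killed semigroups. Two smaller points: your appeal to ``weighted Mellin--Plancherel estimates'' for $\Ip\in\B(\Lg,\Lnu)$ is both unsubstantiated and unnecessary --- with \eqref{eq:sketch_factor} in hand, Jensen's inequality gives $\|\Ip f\|_{\frakm}\le\|f\|_{m}$ in one line, exactly as in the paper; and ``density of $Ran(\Ip)$ follows dually from injectivity of the adjoint'' merely renames the problem (the paper constructs explicit preimages of dilated exponentials in $\Lm$ via Mellin inversion, and of all polynomials in $\Lnu$ plus moment-determinacy of $\Vp$), and this density is needed for $\Ip\in\mathcal{C}(m,\frakm)$ before Theorem~\ref{thm} can be invoked at all.
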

\begin{remark}\label{remark_gL}
\begin{enumerate}[(i)]
\item The expression of the entire moment of $V_{\psi}$ appears in the work of Barczy and D\"oering \cite[Theorem 1]{Barczy_2013}. Their proof rely on a representation as the solution of stochastic differential equation of some recurrent extensions of Lamperti processes. We shall provide an alternative proof which is in the spirit of the papers of Rivero \cite{rivero} and Fitzsimmons \cite{Fitzsimmons_06} and could be used in a more general context.
\item To prove \eqref{eq:intertwin_refl_ss}, we shall resort to a criteria that was developed in \cite{CPY_exp_stoch}, and the details of this proof can be found in Section~\ref{sec:proof_intertwin}.  Note that a crucial assumption is the conservativeness of the semigroups (i.e.~$\Pbar_t \mathbf{1}=\mathbf{1},P_t \mathbf{1}=\mathbf{1}$), a property that is not fulfilled by $\Pbar^{\dag}$ or $P^{\dag}$. Instead, to prove \eqref{eq:intertwin_killed_ss}, we use our Theorem~\ref{thm}, revealing its usefulness in this context.
\item It is well-known that the local time is defined up to a normalization constant. In this paper, it is considered as an additive functional whose support is $\{0\}$ and with the total mass of its asociated Revuz measure normalized to $c(\mbar)=c(\frakm)=c(m)=1$. However, one can also view the local times of $\Ybar$ and $Y$ as the unique increasing process in the Doob-Meyer decomposition of the semi-martingale $(\Ybar_t^{\theta})_{t\geq 0}$ and $(Y_t^{\theta})_{t\geq 0}$ respectively, see e.g.~\cite[Theorem 3.2]{HY_Inv_Local_time_OU}, which are denoted by $\tilde{\frakl}^{\Ybar}$ and $\tilde{\frakl}^{Y}$. The local times for $\Xbar$ and $X$ can be defined similarly, see Section~\ref{sec:proof_phi} for the proof. Under this definition, the total mass of the Revuz measure is given by
\begin{equation}
\tilde{c}(\frakm)=\frac{\theta W_{\phi}(1+\theta)}{\Gamma(1-\theta)\Gamma(1+\theta)}, \quad \tilde{c}(m) = \frac{\theta}{\Gamma(1-\theta)},
\end{equation}
where $W_{\phi}$ will be defined later in the context. Under this normalization, the corresponding Laplace exponents take the form
\begin{equation}\label{eq:Phi_Y_HY}
\tilde{\Phi}_X(q)=\frac{\Gamma(1-\theta) \Gamma(q+\theta)}{W_{\phi}(1+\theta)\Gamma(q)} ,\quad \tilde{\Phi}_Y(q)=\frac{\Gamma(1-\theta)}{\Gamma(1+\theta)}\frac{\Gamma(q+\theta)}{\Gamma(q)}.
\end{equation}
We will detail this computation in Section~\ref{sec:proof_phi}.
\item The intertwining relation \eqref{eq:intertwin_refl_ss} is also a useful tool for deriving the spectral expansion of $P_tf$ and $P^{\dag}_tf$ in $\rmL^2(\frakm)$ under various conditions. We will provide such expansions in Section~\ref{sec:spectral}.
\end{enumerate}
\end{remark}
The rest of this section is devoted to proving Theorem~\ref{thm:gL_main}.

\subsection{Proof of Theorem~\ref{thm:gL_main}\eqref{it:frakm}, \eqref{it:Ip} and \eqref{it:intertwin_ss}} \label{sec:proof_intertwin}
First, let us prove that the expression of the entire moments of  the variable  $\Xbar_1$ under $\Prob_0$ is given by \eqref{eq:moment_Vp}.  Writing $\psi_{\uparrow}(u)=\psi(u+\theta),u\geq 0,$ we observe that
\begin{equation*}
\psi_{\uparrow}(0)=\psi(\theta)=0, \quad \psi_{\uparrow}(u)>0 \textnormal{ for }u>0, \quad \psi_{\uparrow}^{'}(0+)=\psi^{'}(\theta) >0,
\end{equation*}
hence $\psi_{\uparrow} \in \N_{\uparrow}$ is the Laplace exponent of a spectrally negative L\'evy process $\xi^{\uparrow}$, which drifts to $+\infty$ a.s. and is associated, via the Lamperti mapping, to a 1-self-similar process which can be viewed as the minimal process $X^{\dag}$ conditioned to stay positive. Let $I_{\psi_{\uparrow}}=\int_0^{\infty} e^{-\xi^{\uparrow}_t}dt$ denote the exponential functional of $\xi^{\uparrow}$, which, by \cite[Theorem 1]{BY_exp_functional}, is well-defined, i.e.~$I_{\psi_{\uparrow}}<\infty$ a.s., and has negative moments of all orders, see \cite[Theorem 3]{BY_exp_functional}. We also let $\overline{U}_qf(x) = \int_{0}^{\infty} e^{-qt} \Pbar_tf(x) dt$ denote the resolvent of the self-similar semigroup $\Pbar$. Then combining \cite[Theorem 2]{rivero} and \cite[Equation (4)]{BY_exp_functional}, with $p_z(x)=x^z$, we get, for each $q>0,\Re(z)\geq 0$,
\begin{equation} \label{eq:resolvent_1}
\begin{aligned}
\overline{U}_qp_z(0)&=\frac{1}{\M_{I_{\psi_{\uparrow}}}(\theta)\Gamma(1-\theta) q^{\theta}}\M_{I_{\psi_{\uparrow}}}(-z+\theta)\int_0^{\infty}e^{-qt}t^{z-\theta}dt\\
&= \frac{\Gamma(z-\theta+1)}{\Gamma(1-\theta)} \frac{\M_{I_{\psi_{\uparrow}}}(-z+\theta)}{\M_{I_{\psi_{\uparrow}}}(\theta)}p_{-z-1}(q).
\end{aligned}
\end{equation}
On the other hand, from the definition of the resolvent $\overline{U}_q$ and the 1-self-similarity of $\Pbar$, we have
\begin{equation}\label{eq:resolvent_2}
\overline{U}_qp_z(0) = \int_{0}^{\infty} e^{-qt} \Pbar_tp_z(0) dt=\M_{\Vp}(z+1)\int_0^{\infty}e^{-qt}t^zdt=\M_{\Vp}(z+1)\Gamma(z+1)p_{-z-1}(q).
\end{equation}
Combining equation \eqref{eq:resolvent_1} and \eqref{eq:resolvent_2}, we deduce that
\begin{equation} \label{eq:Vp_and_I_psitheta}
\M_{\Vp}(z+1) = \frac{\Gamma(z-\theta+1)}{\Gamma(1-\theta)\Gamma(z+1)}\frac{\M_{I_{\psi_{\uparrow}}}(-z+\theta)}{\M_{I_{\psi_{\uparrow}}}(\theta)}=\M_{B(1-\theta,\theta)}(z+1)\frac{\M_{I_{\psi_{\uparrow}}}(-z+\theta)}{\M_{I_{\psi_{\uparrow}}}(\theta)},
\end{equation}
where $B(1-\theta,\theta)$ is a random variable following a Beta distribution with parameters $( 1-\theta,\theta)$. By \cite[(2.3)]{PS_Weierstrass}, the Mellin transform of $I_{\psi_{\uparrow}}$ satisfies the functional equation
\begin{equation} \label{eq:FE_for_I_psitheta}
\M_{I_{\psi_{\uparrow}}}(-z+1) = \frac{z}{\psi_{\uparrow}(z)}\M_{I_{\psi_{\uparrow}}}(-z),
\end{equation}
which holds on the domain $\{z\in \mathbb{C}: \psi_{\uparrow}(\Re(z))\leq 0\}$. Combining \eqref{eq:FE_for_I_psitheta} and \eqref{eq:Vp_and_I_psitheta}, we get, for $\Re(z)\geq 0$, that
\begin{equation*}
\frac{\M_{\Vp}(z+1)}{\M_{\Vp}(z)} = \frac{\Gamma(z)}{\Gamma(z+1)}\frac{\Gamma(z-\theta+1)}{\Gamma(z-\theta)}\frac{\M_{I_{\psi_{\uparrow}}}(-z+\theta)}{\M_{I_{\psi_{\uparrow}}}(-z+\theta+1)} = \frac{z-\theta}{z}\frac{\psi_{\uparrow}(z-\theta)}{z-\theta}=\frac{\psi_{\uparrow}(z-\theta)}{z}=\frac{\psi(z)}{z}.
\end{equation*}
Hence \eqref{eq:moment_Vp} can be easily observed from the above relation together with the initial condition $\M_{\Vp}(1)=1$. Next, the estimates
\begin{equation*}
\left|\frac{\frac{\prod_{k=1}^{n+1}\psi(k)}{((n+1)!)^2}}{\frac{\prod_{k=1}^n\psi(k)}{(n!)^2}}\right| = \left|\frac{\psi(n+1)}{(n+1)^2}\right| \rightarrow \left\{\begin{array}{rcl}\frac{\sigma^2}{2} &\mbox{if} &\sigma^2>0 \\
0 & \mbox{if} & \sigma^2=0 \end{array}\right. \quad \textnormal{ as } n\rightarrow \infty,
\end{equation*}
yields that the series
\begin{equation}\label{Laplace_trans_VPsi}
\E[e^{q\Vp}]=\sum_{n=1}^{\infty}\frac{\M_{\Vp}(n+1)}{n!}q^n = \sum_{n=1}^{\infty}\frac{\prod_{k=1}^n\psi(k)}{(n!)^2}q^n
\end{equation}
converges for $|q|<\frac{2}{\sigma^2}$ when $\sigma^2>0$ and converges for $|q|<\infty$ when $\sigma^2=0$. Therefore, we get that $\Vp$ is moment determinate. This completes the proof of Theorem~\ref{thm:gL_main}\eqref{it:frakm}. Now, combining \cite[Theorem 1]{rivero} and \cite[Proposition 2.4]{PP_refined_factorization} combined, we obtain that the law of $\Vp$ is absolute continuous and we denote its density $\frakm$. Then, we write, for any $t,x>0$,
\[t \frakn_t(tx)=\frakm(x),\]
i.e.~changing slightly notation here and below $\int_0^{\infty} f(x)\frakm(x)dx=\frakm f = \frakn_t \rmd_{1/t} f$. Then, combining \eqref{eq:moment_Vp} with the self-similarity property of $\Pbar$ identifies $(\frakn_t(x)dx)_{t\geq 0}$ as a family of entrance laws for $\Pbar$, that is, for any $t,s>0$ and $f\in \Co$, $\frakn_t \Pbar_s f = \frakn_{t+s}f.$
Next, using successively the relation \eqref{eq:PKrelation}, the previous identity with $t=1$ and $s=e^t-1$, and the definition of $\frakn_t$ above, we get that, for any $t>0$,
\[\frakm P_tf = \frakm \Pbar_{e^t-1} \rmd_{e^{-t}} \circ f = \frakn_{e^t} \rmd_{e^{-t}}\circ f = \frakm f.\]
Hence,  $\frakm(x)dx$ is an invariant measure for $P$. Therefore, $P$ can be uniquely extended to a strongly continuous contraction semigroup on $\Lnu$, also denoted by $P$ when there is no confusion.

Next, we proceed by proving Theorem~\ref{thm:gL_main}\eqref{it:Ip}. The fact that $\Ip\in\B(\Co)$ follows immediately by dominated convergence. For any $f\in \Lm$, we use the Cauchy-Schwarz inequality and a change of variable to deduce that
\[\|\Ip f\|_{\mbar}^2 \leq \E\left[\int_0^{\infty}f^2(xI_{\phi})\mbar(x)dx\right]=\M_{I_{\phi}}(\theta)\int_0^{\infty}f^2(x)\mbar(x)dx = \M_{I_{\phi}}(\theta) \|f\|_{\mbar}^2.\]
Since $\M_{I_{\phi}}(\theta)<\infty$ by \cite[Proposition 6.8]{PS_spectral}, we get that $\Ip \in \B(\Lm)$. In order to prove that the range of $\Ip$ is dense in $\B(\Lm)$, we first define the following function, for $ \Re(z)\in \left(\frac{\theta}{2},\frac{\theta}{2}+1\right),$
\begin{equation}
\M_g(z)=\frac{W_{\phi}(-z+\frac{\theta}{2}+1)\Gamma(z-\frac{\theta}{2})}{\Gamma(-z+\frac{\theta}{2}+1)},
\end{equation}
where $W_{\phi}$ is the unique log-concave solution to the functional equation $W_{\phi}(z+1)=\phi(z)W_{\phi}(z)$ for $\Re(z)\geq 0$, with initial condition $W_{\phi}(1)=1$, see \cite[Theorem 5.1]{PS_spectral} and \cite{Patie-Savov-Exp} for a comprehensive study of this equation. Using the Stirling formula, see e.g.~\cite[(2.1.8)]{PK_Mellin},
\begin{equation} \label{eq:stir}
\left|\Gamma(z)\right| = C|e^{-z}||z^z||z|^{-\frac12}(1 + o(1)), \quad C>0,
\end{equation}
which is valid for large $|z|$ and $| \arg(z)| < \pi$,
as well as the large asymptotic behavior, along the imaginary line $\frac12+ib$, of $W_{\phi}$, see \cite[Theorem 5.1(3)]{PS_spectral}, we have
\begin{equation} \label{eq:asymp_Mg}
\M_g\left(\frac12+ib\right) ={\rm{o}}\left(|b|^{-\theta-u}\right)
\end{equation}
as $|b|\rightarrow\infty$, for any $u> \frac12-\theta$. $\M_g$ being analytical on the strip $\Re(z)\in \left(\frac{\theta}{2},\frac{\theta}{2}+1\right)$,  it is therefore absolutely integrable and decays to zero uniformly along the lines of this strip. Hence one can apply the Mellin inversion theorem which combines with the Cauchy's Theorem, see e.g.~\cite[Lemma 3.1]{Patie_Zhao_17} for details of a similar computation, gives that
\[g(x)=\frac{1}{2\pi i}\int_{\frac12-i\infty}^{\frac12+i\infty} x^{-z}\M_g(z)dz=\sum_{n=0}^{\infty} \frac{(-1)^n W_{\phi}(n+1)}{(n!)^2}x^{n-\frac{\theta}{2}}.\]
On the other hand, again by \eqref{eq:asymp_Mg}, one easily observes that the mapping $b \mapsto \M_g(\frac12+ib) \in \rmL^2(\R)$ and therefore, by the Parseval identity of the Mellin transform, we have $g\in \rmL^2(\R_+)$, which further yields that
\[g^{(\theta)}(x)=x^{\frac{\theta}{2}}g(x)= \sum_{n=0}^{\infty} \frac{(-1)^n W_{\phi}(n+1)}{(n!)^2}x^n \in \Lm.\]
Moreover, we recall from \cite{BY_moments} that the law of $I_{\phi}$ is absolutely continuous, with a density denoted by $\iota$, and is determined by its entire moments
\begin{equation}\label{eq:Mellin_I_phi}
\M_{I_{\phi}}(n+1)=\E[I_{\phi}^n]=\frac{n!}{\prod_{k=1}^n\phi(k)}=\frac{n!}{W_{\phi}(n+1)}, n \in \mathbb{N}.
\end{equation}
Hence, by means of a standard application of Fubini theorem, see e.g.~\cite[Section 1.77]{Titchmarsh39}, one shows that, for any $c,x>0$,
\[ \Ip \rmd_c g^{(\theta)}(x)= \sum_{n=0}^{\infty} \frac{(-1)^n W_{\phi}(n+1)}{(n!)^2}(cx)^n \M_{I_{\phi}}(n+1)=\sum_{n=0}^{\infty} \frac{(-1)^n }{n!}(cx)^n=\rmd_c \e(x),\]
where $\e(x)=e^{-x} \in \Lm$. Since the span of $(\rmd_c \e)_{c>0}$ is dense in $\Lm$, we conclude that $\Ip$ has a dense range in $\Lm$. Next, combining \eqref{eq:moment_Vp} and \eqref{eq:Mellin_I_phi}, we obtain that, for all $n\in \mathbb{N}$,
\begin{equation*}
\M_{V_{\psi}}(n+1)\M_{I_{\phi}}(n+1) =\frac{\prod_{k=1}^n (k-\theta)\phi(k)}{\prod_{k=1}^n\phi(k)}=\frac{\Gamma(n+1-\theta)}{\Gamma(1-\theta)}=\M_{G(1-\theta)}(n+1),
\end{equation*}
where we recall that $G(1-\theta)$ is a Gamma random variable with parameter $1-\theta$ whose law is denoted by $m$. Since both $I_{\phi}$ and $G(1-\theta)$ are moment determinate and so is $V_{\psi}$, see Theorem~\ref{thm:gL_main}\eqref{it:frakm}, we have
\begin{equation} \label{eq:decompos_gamma}
G(1-\theta)\eqindist \Vp \times I_{\phi},
\end{equation}
where $\eqindist$ stands for the identity in distribution and $\times$ represents the product of independent variables. Therefore, for any $f\in \Lg$, by H\"older's inequality and the factorization identity \eqref{eq:decompos_gamma}, we have
\begin{eqnarray} \label{eq:Ipbdd}
\|\Ip f\|_{\frakm}^2 &\leq &\int_0^{\infty}\Ip f^2(x)\frakm(x)dx = \int_0^{\infty}\int_0^{\infty}\iota(y)f^2(xy)dy\frakm(x)dx \\
&=& \int_0^{\infty}f^2(z)\int_0^{\infty}\frac1x \iota\left(\frac{z}{x}\right)\frakm(x)dxdz = \int_0^{\infty}f^2(z)m(z)dz=\|f\|^2_{m},
\end{eqnarray}
where the second last equality comes from the factorization \eqref{eq:decompos_gamma}. Therefore, we see that $\Ip \in\B(\Lg,\Lnu)$ with $|||\Ip|||\leq 1$. Next, for an arbitrary polynomial of order $n\in \mathbb{N}$, denoted by $\mathtt{p}_n(x)=\sum_{i=0}^n a_i x^i, a_i\in\R$, we write $g_n(x)=\sum_{i=0}^n \frac{a_i}{\M_{\Vp}(i+1)}x^i$. It is easy to observe that $g_n\in \Lg$ and $\Ip g_n(x)=f_n(x)$. Therefore, $\mathtt{p}_n\in Ran(\Ip)\subseteq \Lnu$. Using the fact that $V_{\psi}$ is moment determinate, we deduce that the set of polynomials are dense in $\Lnu$, see \cite[Corollary 2.3.3]{Akhiwzer_moment}, hence $\Ip$ has dense range in $\Lnu$. Moreover, as $\Ip$ is a Markov multiplier, i.e.~$\Ip \mathbf{1}(x)=\int_0^{\infty}\iota(y)dy=1$ where here $\mathbf{1}=\mathbf{1}_{\R_+}$. Furthermore, observe that
\[\Ip \mathbf{1}_{\{0\}}(x)=\int_0^{\infty}\iota(y)\mathbf{1}_{\{0\}}(xy)dy = \left\{\begin{array}{rcl} \int_0^{\infty} \iota(y)dy=1 & \mbox{if} & x=0, \\ 0 & \mbox{if} & x\neq 0, \end{array}\right.  \]
and hence $\Ip\mathbf{1}_{\{0\}}\equiv \mathbf{1}_{\{0\}}$. Moreover, for any $f\in \Lg$, $\Ip f(0)=\int_0^{\infty} f(0)\iota(y)dy = f(0)$. To prove similar results for $\hatLambda_{\phi}$, let us first observe that for any $f\in\Lg, g\in\Lnu, f,g\geq 0$,
\begin{eqnarray*}
\left\langle f, \hatLambda_{\phi} g\right\rangle_{m}&=&\left\langle \Ip f,g\right\rangle_{\frakm}= \int_0^{\infty}f(xy)\iota(y)dyg(x)\frakm(x)dx\\
&= &\int_0^{\infty}f(r)m^{-1}(r)\int_0^{\infty}\iota(r/x)g(x)\frakm(x)/xm(r)dr\\
&=& \int_0^{\infty}f(r)m^{-1}(r)\int_0^{\infty}g(rv)\frakm(rv)\iota(1/v)1/v dv m(r)dr.
\end{eqnarray*}
Moreover, for any $f\in\Lg, g\in\Lnu$, $|f|\in\Lg, |g|\in\Lnu$, hence we get that for any $g\in\Lnu$,
\begin{equation}\label{eq:hatLambda}
\hatLambda_{\phi} g(x)\eqae \frac{1}{m(x)}\int_0^{\infty}g(xy)\frakm(xy)\iota\left(\frac1y\right)\frac1y dy.
\end{equation}
Therefore, for any $x\geq 0$, $\hatLambda_{\phi}\mathbf{1}(x)= \frac{1}{m(x)}\int_0^{\infty}\frakm(xy)\iota\left(\frac1y\right)\frac1y dy=1$ by the factorization \eqref{eq:decompos_gamma}. Furthermore, both properties $\hatLambda_{\phi}\mathbf{1}_{\{0\}}=\mathbf{1}_{\{0\}}$ and $\hatLambda_{\phi}f(0)=f(0)$ can be proved using the same method as before. Next, we prove \eqref{eq:intertwin_refl_ss} in two steps. The first step is to establish \eqref{eq:intertwin_refl_ss} in $\Co$. Note that by identities \eqref{eq:PKrelation} and \eqref{eq:QRrelation}, in order to prove $P_t \Ip = \Ip Q_t$ on $\Co$, it suffices to show only that $\Pbar_t \Ip = \Ip \Qbar_t$ on $\Co$, for which we use the criteria stated in \cite[Proposition 3.2]{CPY_exp_stoch}.  On the one hand, by \eqref{eq:decompos_gamma}, we have
\begin{equation} \label{eq:factor_Gamma_z}
\M_{G(1-\theta)}(z)=\M_{\Vp}(z)\M_{I_{\phi}}(z)
\end{equation}
for all $z\in 1+i\R$. since $\M_{G(1-\theta)}(z)\neq 0$ on $z\in 1+i\R$ and $\M_{I_{\phi}}(z)<\infty $ on $z\in 1+i\R$, see \cite[Proposition 6.7]{PS_spectral}, we see from \eqref{eq:factor_Gamma_z} that $\M_{\Vp}(z)\neq 0$ on $z=1+i\R$. Hence by an application of the Wiener's Theorem, see e.g.~\cite[Lemma 7.9]{PS_spectral}, one concludes that the multiplicative kernel $\mathcal{V}_{\psi}$ associated to $\Vp$, i.e.~$\mathcal{V}_{\psi}f(x)=\E[f(x\Vp)]$, is injective on $\Co$. This combined with \eqref{eq:decompos_gamma} provides all conditions for the application of \cite[Proposition 3.2]{CPY_exp_stoch}, which gives that \eqref{eq:intertwin_refl_ss} holds for all $t\geq 0$ and $f\in\Co$. Next, recalling that $\Co\cap \Lm$ is dense in $\Lm$ (resp.~$\Co\cap \Lg$ is dense in $\Lg$), and since $\Ip \in \B(\Lg,\Lnu)$ and, for all $t\geq 0$, $\Pbar_t \in \B(\Lm), \Qbar_t \in \B(\Lm)$ (resp.~$P_t\in\Lnu,Q_t\in \Lg$), we conclude the extension of the intertwining relation between $\Pbar$ and $\Qbar$ from $\Co$ to $\Lm$ (resp.~between $P$ and $Q$ from $\Co$ to $\Lg$) by a density argument. Finally, using the properties of $\Ip$ proved in the first statement, we can directly apply Theorem~\ref{thm} to  deduce \eqref{eq:intertwin_killed_ss} from \eqref{eq:intertwin_refl_ss}. This concludes the proof of Theorem~\ref{thm:gL_main}\eqref{it:intertwin_ss}.

\subsection{Proof of Theorem~\ref{thm:gL_main}\eqref{it:Phi}} \label{sec:proof_phi}
In order to compute $\Phi_{\Ybar}$, we first note that \cite{Pitman_Yor_1996} has considered the normalization $\E_x[\tilde{\frakl}^R_t]=\int_0^t \overline{q}_s(x,0)ds$, where $\overline{q}_s(x,y)$ is the transition density of $\Qbar$ with respect to the speed measure $\mbar$. Under this normalization, we have
\[c(\mbar)=\lim_{t\rightarrow 0}\frac1t \int_0^t \int_0^{\infty}\mbar(x)\overline{q}_s(x,0)dxds = 1\]
where we used the property that the integration of $\overline{q}_s(x,0)$ with respect to the speed measure is 1. Hence by \cite[Section 5]{DM_ConstantsLocalTime}, we have, for $q>0$,
\[\Phi_{\Ybar}(q)=2\theta \tilde{\Phi}_R(q)=\frac{\Gamma(1-\theta)}{\Gamma(\theta)}2^{1-\theta}q^{\theta}.\]
Combining this formula with the intertwining relation $\Pbar_t \Lambda = \Lambda \Qbar_t$ and Theorem~\ref{thm}, we easily deduce that $\Phi_{\Xbar}=\Phi_{\Ybar}$ and this completes proof of the first half of Theorem~\ref{thm:gL_main}\eqref{it:Phi}. Now let us focus on computing $\Phi_X$ and $\Phi_Y$. As previously mentioned in Remark~\ref{remark_gL}(ii), $\tilde{\frakl}^Y$ is defined in \cite{HY_Inv_Local_time_OU} as the unique continuous increasing process such that
\begin{equation} \label{defn:LY_semimart}
\mathbf{N}_t=Y_t^{\theta}-\tilde{\frakl}^Y_t \quad \textnormal{is a martingale},
\end{equation}
which uses the Doob-Meyer decomposition of the semi-martingale $Y^{\theta}$, where we recall that $Y$ is the squared radial Ornstein-Uhlenbeck process of order $-\theta$. The expression of $\tilde{\Phi}_Y$, the Laplace exponent of the inverse of $\tilde{l}^Y$, is given in \eqref{eq:Phi_Y_HY}.
Therefore, our goal is to compute the constants $\tilde{c}(\frakm)$ and  $\tilde{c}(m)$ and we simply have,
\[ \Phi_X(q)=\frac{\tilde{\Phi}_X(q)}{\tilde{c}(\frakm)},\quad  \Phi_Y(q)=\frac{\tilde{\Phi}_Y(q)}{\tilde{c}(m)}.\]
In this direction, we will need the following Lemma, which is a generalization of \cite[Proposition 2.1]{HY_Inv_Local_time_OU} from continuous semi-martingales to c\`adl\`ag semi-martingales, and serves as a stepping stone for computing $\tilde{c}(\frakm)$.
\begin{lemma}\label{lemma:Local_time_transform}
Let $(M_t)_{t\geq 0}$ be a c\`adl\`ag semi-martingale with $M_0=0$. Let $g:\R_{+}\rightarrow\R_{+}$ be an increasing continuous function with $g(0)=0$, and let $h:\R_{+}\rightarrow\R_{+}$ be a strictly positive, continuous function, locally with bounded variation. We set
\begin{equation*}
N_t=h(t)M_{g(t)},\quad t\geq 0,
\end{equation*}
and we denote by $\tilde{\frakl}^M$ (resp.~$\tilde{\frakl}^N$) the local time at 0 of the c\`adl\`ag semi-martingale $M$ (resp.~$N$). Then $\tilde{\frakl}^N$ can be obtained from a simple transform of $\tilde{\frakl}^M$ by
\begin{equation}
\tilde{\frakl}^N_t=\int_0^t h(s)d\tilde{\frakl}^M_{g(s)}.
\end{equation}
\end{lemma}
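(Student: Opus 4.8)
The plan is to adapt the proof of the continuous-semimartingale version \cite[Proposition 2.1]{HY_Inv_Local_time_OU} to the c\`adl\`ag setting, carrying out the transform $M\mapsto N$ in two steps: first the deterministic time change $M\mapsto \tilde M:=M_{g(\cdot)}$, and then the multiplication by the continuous finite-variation factor $h$, and in each step comparing Tanaka--Meyer decompositions. Throughout, the local time at $0$ is understood as the (continuous, increasing, adapted) process occurring in the Tanaka--Meyer formula, which is pinned down uniquely by that formula; this is the object referred to in the statement.

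\textbf{Step 1 (time change).} Since $g$ is continuous, increasing, with $g(0)=0$, it defines a bona fide deterministic time change, and $\tilde M=M_{g(\cdot)}$ is a c\`adl\`ag semimartingale for $\tilde{\Fcal}_t=\Fcal_{g(t)}$. Writing the Tanaka--Meyer formula for $|M|$ at level $0$ and evaluating it at time $g(t)$, the stochastic integral transforms by the time-change rule for stochastic integrals, and — because $g$ is continuous — the jumps of $\tilde M$ are exactly the jumps $\Delta M_{g(\cdot)}$ of $M$, so the jump sum also transforms by composition with $g$. One then reads off that $t\mapsto \tilde{\frakl}^M_{g(t)}$ is a continuous increasing adapted process realizing the Tanaka--Meyer identity for $|\tilde M|$; by uniqueness of the local time, $\tilde{\frakl}^{\tilde M}_t=\tilde{\frakl}^M_{g(t)}$.

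\textbf{Step 2 (multiplication by $h$).} Since $h$ is continuous of locally bounded variation, integration by parts gives
\[
N_t \;=\; h(t)\tilde M_t \;=\; \int_0^t h(s)\,d\tilde M_s + \int_0^t \tilde M_{s-}\,dh(s),
\]
the covariation term $[h,\tilde M]$ vanishing because $h$ is continuous of finite variation, and $h(0)\tilde M_0 = h(0)M_0 = 0$; in particular $N$ is a semimartingale with this explicit decomposition. Using $h>0$ and $h$ continuous we have $|N_t|=h(t)|\tilde M_t|$, $\mathrm{sgn}(N_{s-})=\mathrm{sgn}(\tilde M_{s-})$ and $\Delta N_s = h(s)\Delta\tilde M_s$. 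Applying integration by parts to $h(t)|\tilde M_t|$ and inserting the Tanaka--Meyer formula for $|\tilde M|$ produces one expansion of $|N_t|$ in which the singular part is $\int_0^t h(s)\,d\tilde{\frakl}^{\tilde M}_s$; on the other hand, substituting the explicit decomposition of $N$ into the Tanaka--Meyer formula for $|N|$ at level $0$ and simplifying the martingale integral and the jump sum by means of the three identities above produces the \emph{same} expansion, now with $\tilde{\frakl}^N_t$ in place of $\int_0^t h(s)\,d\tilde{\frakl}^{\tilde M}_s$. Since $\int_0^{\cdot} h(s)\,d\tilde{\frakl}^{\tilde M}_s$ is continuous, increasing and adapted, uniqueness of the local time in the Tanaka--Meyer formula forces $\tilde{\frakl}^N_t=\int_0^t h(s)\,d\tilde{\frakl}^{\tilde M}_s$, and Step 1 rewrites this as $\tilde{\frakl}^N_t=\int_0^t h(s)\,d\tilde{\frakl}^M_{g(s)}$, as claimed.

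The only real work beyond the continuous case is the jump bookkeeping: one must check that the jump sums in the two Tanaka--Meyer expansions of $|N|$ coincide and that the singular ``local-time'' part is precisely the continuous increasing process $\int_0^{\cdot}h\,d\tilde{\frakl}^{\tilde M}$. Both facts hinge on $h$ being continuous — which yields $\Delta N_s=h(s)\Delta\tilde M_s$ and $\mathrm{sgn}(N_{s-})=\mathrm{sgn}(\tilde M_{s-})$ — together with the fact that the continuous time change $g$ preserves the jump structure; once these are in place, everything reduces to matching terms in two semimartingale decompositions of the same process and invoking the uniqueness of semimartingale local time.
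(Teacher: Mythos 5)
Your proposal is correct and follows essentially the same route as the paper: both rest on the Meyer--Tanaka formula, the time-change rule for the stochastic integral and jump sum, integration by parts against the continuous finite-variation factor $h$, and identification of the continuous increasing term. The only difference is organizational — you factor the argument into two steps (time change, then multiplication by $h$) where the paper performs the computation in one pass — which does not change the substance.
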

\begin{proof}
By definition of the local time via the Meyer-Tanaka formulae, see \cite[Chapter IV]{Protter_Stoc_Integ}, one has
\begin{eqnarray}
|M_t|&=&\int_0^{t}sgn(M_s)dM_s+\tilde{\frakl}^M_t+\sum_{0<s\leq t}(|M_s|-|M_{s-}|-sgn(M_{s-})\Delta M_s),\\
|N_t|&=&\int_0^{t}sgn(N_s)dN_s+\tilde{\frakl}^N_t+\sum_{0<s\leq t}(|N_s|-|N_{s-}|-sgn(N_{s-})\Delta N_s), \label{eq:Localtime_N}
\end{eqnarray}
where the function $sgn$ is the sign function defined by $sgn(x)=\mathbf{1}_{\{x>0\}} -\mathbf{1}_{\{x<0\}}$. Consequently,
\begin{align*}
|M_{g(t)}|=&\int_0^{g(t)}sgn(M_s)dM_s+\tilde{\frakl}^M_{g(t)}+\sum_{0<s\leq g(t)}(|M_s|-|M_{s-}|-sgn(M_{s-})\Delta M_s)\\
=&\int_0^{t}sgn(N_s)d((h(s))^{-1}N_s)+\tilde{\frakl}^M_{g(t)}+\sum_{0<s\leq t}(h(s))^{-1}(|N_s|-|N_{s-}|-sgn(N_{s-})\Delta N_s)\\
=&\int_0^{t}sgn(N_s)(h(s))^{-1}dN_s-\int_0^{t}(h(s))^{-2}|N_s|dh(s)+\tilde{\frakl}^M_{g(t)}\\
&+\sum_{0<s\leq t}(h(s))^{-1}(|N_s|-|N_{s-}|-sgn(N_{s-})\Delta N_s).
\end{align*}
Therefore using integration by parts, we have
\begin{eqnarray}
|N_t|&=& h(t)|M_{g(t)}| = \int_0^t h(s)dM_{g(s)}+\int_0^t M_{g(s)}dh(s) \nonumber\\
&=& \int_0^{t}sgn(N_s)d(N_s)-\int_0^{t}(h(s))^{-1}|N_s|dh(s)+\int_0^t h(s)d\tilde{\frakl}^M_{g(s)}+\int_0^{t}(h(s))^{-1}|N_s|dh(s)\nonumber\\
& &+\sum_{0<s\leq t}(|N_s|-|N_{s-}|-sgn(N_{s-})\Delta N_s)\nonumber\\
&=& \int_0^{t}sgn(N_s)dN_s+\int_0^t h(s)d\tilde{\frakl}^M_{g(s)}+\sum_{0<s\leq t}(|N_s|-|N_{s-}|-sgn(N_{s-})\Delta N_s),\label{eq:Localtime_N2}
\end{eqnarray}
which, by identification between \eqref{eq:Localtime_N} and \eqref{eq:Localtime_N2}, yields that $\tilde{\frakl}^N_t=\int_0^th(s)d\tilde{\frakl}^M_{g(s)}$.
\end{proof}
Now let us compute the constants $\tilde{c}(\frakm)$ and $\tilde{c}(m)$. To this end, we first recall from \cite{rivero} that $p_{\theta}(x)=x^{\theta},x>0,$ is an invariant function for the semigroup $\Pbar^{\dag}$, therefore $\Pbar_tp_{\theta}(x)\geq \Pbar^{\dag}_tp_{\theta}(x)=p_{\theta}(x)$, from which we deduce that the process $(\Xbar^{\theta})=(\Xbar^{\theta}_t)_{t\geq 0}$ is a submartingale. Hence using a similar definition as \eqref{defn:LY_semimart}, we define $\tilde{\frakl}^{\Xbar}$ as the unique increasing process such that
\begin{equation}
\mathbf{M}_t=\Xbar_t^{\theta}-\tilde{\frakl}^{\Xbar}_t \quad \textnormal{is a martingale}.
\end{equation}
Using the deterministic time change \eqref{eq:YZ_relation} between $X$ and $\Xbar$, we get $X_t^{\theta}=e^{-\theta t}\Xbar_{e^t-1}^{\theta}$, hence Lemma~\ref{lemma:Local_time_transform} yields that
\begin{eqnarray*}
\tilde{\frakl}^X_t &=& \int_0^t e^{-\theta s}d\tilde{\frakl}^{\Xbar}_{e^s-1}=\int_0^te^{-\theta s}\left(d\Xbar^{\theta}_{e^s-1}+d\mathbf{M}_{e^s-1}\right)=\int_0^te^{-\theta s}d(e^{\theta s}X_s^{\theta})+\int_0^t e^{-\theta s}d\mathbf{M}_{e^s-1}\\
&=& \theta\int_0^t X_s^{\theta}ds+X_t^{\theta}-X_0^{\theta}+\int_0^t e^{-\theta s}d\mathbf{M}_{e^s-1}.
\end{eqnarray*}
Now we observe that, on the one hand,
\begin{align*}
\int_0^{\infty}\E_x\left[\int_0^t X_s^{\theta}ds\right]\frakm(x)dx&=\int_0^t\int_0^{\infty}\E_x\left[ X_s^{\theta}\right]\frakm(x)dxds=\int_0^t\frakm P_s p_{\theta}ds\\
&= \int_0^t\frakm p_{\theta}ds=\frac{W_{\phi}(1+\theta)}{\Gamma(1-\theta)\Gamma(1+\theta)},
\end{align*}
where we used the fact that $\frakm(x)dx$ is an invariant measure for the semigroup $P$. On the other hand, by the martingale property of $(\mathbf{M}_t)_{t\geq 0}$, we have $\E_x[\int_0^t e^{-\theta s}d\mathbf{M}_{e^s-1}]=0$ for all $x\geq 0$. Hence, by the definition of $\tilde{c}(\frakm)$, see \eqref{eq:c_nu}, and the definition of semigroup $P$, we get
\begin{align*}
\tilde{c}(\frakm)&=\int_0^{\infty}\E_x[\tilde{\frakl}^X_1]\frakm(x)dx=\int_0^{\infty}\E_x\left[\left(\theta\int_0^1 X_s^{\theta}ds+X_1^{\theta}-X_0^{\theta}\right)\right]\frakm(x)dx\\
&=\frac{\theta W_{\phi}(1+\theta)}{\Gamma(1-\theta)\Gamma(1+\theta)}+\frakm P_1p_{\theta}-\frakm p_{\theta}=\frac{\theta W_{\phi}(1+\theta)}{\Gamma(1-\theta)\Gamma(1+\theta)}.
\end{align*}
In particular, since $\phi^Y(u)=u$, we have $\tilde{c}(m)=\frac{\theta}{\Gamma(1-\theta)}$, and  Theorem~\ref{thm:gL_main}\eqref{it:Phi} follows from dividing \eqref{eq:Phi_Y_HY} by $\tilde{c}(m)$.
\subsection{Proof of Theorem~\ref{thm:gL_main}\eqref{it:Krein} and spectral expansions} \label{sec:spectral}
In the section, we will prove Theorem~\ref{thm:gL_main}\eqref{it:Krein} by providing the spectral expansion of $P_tf$ and $P_t^{\dag}f$. In fact, we will find conditions on $\psi$, $f$ and $t$ such that these expansions hold. Note that the expansions for $\Pbar$ and $\Pbar^{\dag}$ require additional analysis that will be detailed in a forthcoming paper, see already the paper by Patie and Zhao \cite{Patie_Zhao_17}, which provides the spectral expansions for reflected stable processes. Let us start by recalling some well-known results for the self-adjoint semigroups $Q$ and $Q^{\dag}$. For $n\geq 0$, let $\Lag_n$ and $\Lag^{\dag}_n$ be the Laguerre polynomials  (of different orders) defined by
\begin{eqnarray}
\Lag_n(x) &= & \frac{\mathcal{R}^{(n)}m(x)}{m(x)}=\sum_{k=0}^{n}(-1)^k \frac{\Gamma(n+1-\theta)}{\Gamma(k+1-\theta)\Gamma(n-k+1)} \frac{x^k}{k!},\label{eq:Laguerre}\\
\Lag^{\dag}_n(x) &=& \sum_{k=0}^{n}(-1)^k \frac{\Gamma(n+1+\theta)}{\Gamma(k+1+\theta)\Gamma(n-k+1)} \frac{x^{k+\theta}}{k!},\label{eq:Laguerre_dag}
\end{eqnarray}
where $\mathcal{R}^{(n)}f(x)=\frac{(x^nf(x)) ^{(n)}}{n!}$ is the Rodrigues operator. Then $\Lag_n\in \Lg$ (resp.~$\Lag^{\dag}_n\in \Lg$) is an eigenfunction of $Q_t$ (resp.~$Q^{\dag}_t$) associated with eigenvalue $e^{-nt}$ (resp.~$e^{-(n+\theta)t}$), i.e.~$Q_t \Lag_n(x)=e^{-nt}\Lag_n(x)$ (resp.~$Q^{\dag}_t \Lag^{\dag}_n(x)=e^{-(n+\theta)t}\Lag^{\dag}_n(x)$) for all $n\geq 0$. Moreover, for any $t>0, f\in\Lg$, $Q_t$  and $Q_t^{\dag}$ admit the following spectral expansions in $\Lg$
\begin{eqnarray}
Q_tf &=& \sum_{n=0}^{\infty} e^{-nt} \frakc_{n}(-\theta)\left\langle f,\Lag_n \right\rangle_{m} \Lag_n,\label{eq:spectral_Q}\\
Q^{\dag}_tf &= & \frac{\Gamma(1-\theta)}{\Gamma(1+\theta)}\sum_{n=0}^{\infty} e^{-(n+\theta)t} \frakc_{n}(\theta)\left\langle f,\Lag^{\dag}_n \right\rangle_{m} \Lag^{\dag}_n,\label{eq:spectral_Q_dag}
\end{eqnarray}
where for any $n\geq 0, u>-1$, we set
\begin{equation} \label{eq:frakc_nm}
\frakc_{n}(u) = \frac{\Gamma(1+u)\Gamma(n+1)}{\Gamma(n+1+u)}.
\end{equation}
In order to study the spectral expansions of $P$ and $P^{\dag}$, we again recall from \cite{rivero} that the function $p_{\theta}(x)=x^{\theta}$ is an invariant function for semigroup $\Pbar^{\dag}$. Hence we have
\begin{equation*}
P^{\dag}_tp_{\theta}(x)=\Pbar^{\dag}_{e^t-1}\rmd_{e^{-t}}p_{\theta}(x)=\Pbar^{\dag}_{1-e^{-t}}p_{\theta}(xe^{-t})=p_{\theta}(xe^{-t})=e^{-\theta t}p_{\theta}(x),
\end{equation*}
i.e.~$p_{\theta}$ is a $\theta$-invariant function for semigroup $P^{\dag}$. Therefore, by Doob's $h$-transform, we can define a semigroup $P^{\uparrow}=(P_t^{\uparrow})_{t\geq 0}$, for $t\geq 0$ and $x>0$, by
\begin{equation}\label{eq:h_transform_theta_dag}
P^{\uparrow}_tf(x)=e^{\theta t}\frac{P^{\dag}_t p_{\theta}f(x)}{p_{\theta}(x)}.
\end{equation}
Note that $P^{\uparrow}$ is a generalized Laguerre semigroup associated to $\psi_{\uparrow} \in\N_{\uparrow}$, which we recall is defined as $\psi_{\uparrow}(u)=\psi(u+\theta)$ for all $u\geq 0$. Therefore, as shown in \cite{PS_spectral}, the semigroup $P^{\uparrow}$ has an invariant measure $m^{\uparrow}$, whose law is absolutely continuous and determined by its entire moments
\begin{equation}
\M_{m^{\uparrow}}(n+1)=\frac{\prod_{k=1}^n \psi_{\uparrow}(k)}{n!}, n\in \mathbb N.
\end{equation}
Next,  we say that a  sequence $(P_n)_{n\geq 0}$ in the Hilbert space $\Lnu$ is a Bessel sequence  if there exists   $A>0$ such that
\begin{equation} \label{eq:frame1}
 \sum_{n=0}^{\infty}  |\langle f, P_n\rangle_\nu |^2 \leq A   ||f||^2_\nu
\end{equation}
hold, for all $f \in \Lnu$, see e.g.~the monograph \cite{Christensen-03}. The constant $A$ is called a Bessel bound.
Recalling that the class $\N$ is defined as the collection of $\psi$ in the form \eqref{eq:psi}, we further define the following subclasses of $\N$. Denoting $\overline{\overline{\Pi}}(y)=\int_{y}^{\infty}\int_{r}^{\infty}\Pi(dx)dr$  the double tail of $\Pi$, we set
\begin{align}
\N_P &= \{\psi\in \N; \sigma^2>0 \},\\
\overline{\N}_{\infty} &= \N_P \cup \{\psi\in \N; \sigma^2=0, \overline{\overline{\Pi}}(0+) = \infty\}.
\end{align}
Note that when $\psi \in \overline{\N}_{\infty}$ then $\lim_{u\to \infty }\frac{\psi(u)}{u}=\infty$.
Moreover, define the following sets of $(\psi,f)$,
\begin{eqnarray}
\D^{\checkmark}(\Ip)&=& \{(\psi,f);\: \psi\in \N_{\checkmark},\: f\in Ran(\Ip)\},\\
\D^{\N_P}(\frakm)&=& \{(\psi,f);\: \psi\in \N_P\cap \N_{\checkmark}, \: f\in \Lnu\}.
\end{eqnarray}
Finally, for any $\psi \in \N$, we let
\begin{equation}\label{eq:Poly_n}
\Poly_n^{\psi}(x) = \sum_{k=0}^{n}(-1)^k {n\choose k} \frac{k!}{\prod_{i=1}^k\psi(i)}x^k.
\end{equation}
We are now ready to state the following theorem, which provides spectral properties of the non-self-adjoint semigroups $P_tf$ and $P^{\dag}_tf$.
\begin{theorem} \label{thm:spectral}
For any $\psi\in \N_{\checkmark}$, we have the following.
\begin{enumerate}
\item \label{it:eigen} Let us write, for any $n\in \mathbb{N}$,
\begin{equation}
\Poly_n(x) = \Poly_n^{\psi}(x), \quad \Poly^{\dag}_n(x) =x^{\theta} \Poly_n^{\psi_{\uparrow}}(x).
\end{equation}
Then $\Poly_n\in \Lnu$ (resp.~$\Poly^{\dag}_n\in \Lnu$) is an eigenfunction of $P_t$ (resp.~$P^{\dag}_t$) associated to the eigenvalue $e^{-nt}$ (resp.~$e^{-(n+\theta)t}$). Moreover,  the sequence $\left(\frakc^{-\frac12}_n(-\theta)\Poly_n\right)_{n\geq 0}$ is a dense Bessel sequence in $\Lnu$ with upper bound 1, where we recall that $\frakc_n(u)$ is defined in \eqref{eq:frakc_nm}. Finally, we have $(e^{-nt})_{n\geq 0}= S(Q_t)\subseteq S(P_t)$, and $(e^{-(n+\theta)t})_{n\geq 0}= S(Q^{\dag}_t)\subseteq S(P^{\dag}_t)$.
\item \label{it:coeigen} For any $\psi\in \N_{\checkmark}\cap \overline{\N}_{\infty}$ and $n\geq 0$, let
\begin{equation} \label{eq:nu_n}
\frakm_n(x)=\frac{\Rcal^{(n)}\frakm(x)}{\frakm(x)}, \quad \frakm^{\dag}_n(x) = \frac{\Rcal^{(n)}m^{\uparrow}(x)}{x^{\theta}\frakm(x)}.
\end{equation}
Then $\frakm_n$ (resp.~$\frakm^{\dag}_n$)
is an eigenfunction of $\hatP_t$ (resp.~$\hatP^{\dag}_t$) associated to the eigenvalue $e^{-nt}$ (resp.~$e^{-(n+\theta)t}$). Moreover, the sequences $(\Poly_n)_{n\geq 0}$ and $(\frakm_n)_{n\geq 0}$ (resp.~$(\Poly^{\dag}_n)_{n\geq 0}$ and $(\frakm^{\dag}_n)_{n\geq 0}$) are biorthogonal sequences in $\Lnu$. Furthermore, if $\psi\in \N_P\cap \N_{\checkmark}$, then for any $\epsilon>0$ and large $n$,
\begin{equation} \label{eq:nu_n_estimate_NP}
\|\frakm_n\|_{\frakm}=O(e^{\epsilon n}).
\end{equation}
If in addition $\overline{\overline{\Pi}}(0+)<\infty$, then with $\frakb=\frac{\beta+\overline{\overline{\Pi}}(0+)}{\sigma^2}$, we have for large $n$,
\begin{equation} \label{eq:nu_n_estimate_frakm}
\|\frakm_n\|_{\frakm}=O(n^{\frakb}),
\end{equation}
and the sequence $(\sqrt{\frakc_n(\frakb)}\frakm_n)_{n\geq 0}$ is a Bessel sequence in $\Lnu$ with bound 1.
\item \label{it:spec} For any $t>0$ and $(\psi,f)\in \D^{\checkmark}(\Ip) \cup \D^{\N_P}(\frakm)$, we have in $\Lnu$ the following spectral expansions
\begin{eqnarray}
P_t f(x) &=& \sum_{n=0}^{\infty} e^{-nt}\left\langle  f,\frakm_n \right\rangle_{\frakm}\Poly_n(x),\label{eq:spectral_P}\\
P^{\dag}_t f(x) &=& \sum_{n=0}^{\infty} e^{-(n+\theta)t}\left\langle  f,\frakm^{\dag}_n \right\rangle_{\frakm}\Poly^{\dag}_n(x).\label{eq:spectral_P_dag}
\end{eqnarray}
\end{enumerate}
\end{theorem}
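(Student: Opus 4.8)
The plan is to push the spectral decomposition of the self-adjoint Laguerre semigroups $Q$, $Q^{\dag}$ through the intertwining identities $P_t\Ip=\Ip Q_t$ and $P^{\dag}_t\Ip=\Ip Q^{\dag}_t$ of Theorem~\ref{thm:gL_main}\eqref{it:intertwin_ss}, using throughout that $\Ip\in\B(\Lg,\Lnu)$ and $\hatIp\in\B(\Lnu,\Lg)$ are contractions \eqref{eq:Ipbdd} and that $\Ip$ has dense range \eqref{it:Ip}. The first step is to compute the images of the Laguerre polynomials under $\Ip$. Writing $\prod_{i=1}^k\psi(i)=k!\,\M_{\Vp}(k+1)$ by \eqref{eq:moment_Vp}, using $\M_{\Vp}(k+1)\M_{I_{\phi}}(k+1)=\M_{G(1-\theta)}(k+1)$ from the factorization \eqref{eq:decompos_gamma}, and the fact that $\Ip$ acts on the monomial $p_k(x)=x^k$ by $\Ip p_k=\M_{I_{\phi}}(k+1)p_k$, a term-by-term comparison of \eqref{eq:Laguerre}, \eqref{eq:Laguerre_dag} and \eqref{eq:Poly_n} (for the dagger version one also uses $\M_{I_{\phi}}(k+\theta+1)=\Gamma(k+\theta+1)/W_{\phi}(k+\theta+1)$ together with $W_{\phi}(k+\theta+1)=W_{\phi}(1+\theta)\prod_{j=1}^k\psi_{\uparrow}(j)/j$) yields
\[\Ip\Lag_n=\frac{\Poly_n}{\frakc_n(-\theta)},\qquad \Ip\Lag^{\dag}_n=\frac{\Gamma(n+1+\theta)}{W_{\phi}(1+\theta)\,n!}\,\Poly^{\dag}_n .\]
In particular $\Poly_n,\Poly^{\dag}_n\in\Lnu$ because $\Lag_n,\Lag^{\dag}_n\in\Lg$, and applying the intertwining to $Q_t\Lag_n=e^{-nt}\Lag_n$, $Q^{\dag}_t\Lag^{\dag}_n=e^{-(n+\theta)t}\Lag^{\dag}_n$ gives $P_t\Poly_n=e^{-nt}\Poly_n$ and $P^{\dag}_t\Poly^{\dag}_n=e^{-(n+\theta)t}\Poly^{\dag}_n$, i.e.\ the eigenfunction part of \eqref{it:eigen}.

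For the Bessel statement in \eqref{it:eigen}, I would use duality: for $f\in\Lnu$, $\langle f,\Poly_n\rangle_{\frakm}=\frakc_n(-\theta)\langle\hatIp f,\Lag_n\rangle_m$, so
\[\sum_{n\ge0}\bigl|\langle f,\frakc_n^{-1/2}(-\theta)\Poly_n\rangle_{\frakm}\bigr|^2=\sum_{n\ge0}\frakc_n(-\theta)\,\bigl|\langle\hatIp f,\Lag_n\rangle_m\bigr|^2=\|\hatIp f\|_m^2\le\|f\|_{\frakm}^2 ,\]
the middle equality being Parseval's identity for the orthonormal basis $(\sqrt{\frakc_n(-\theta)}\,\Lag_n)_{n\ge0}$ of $\Lg$ read off from \eqref{eq:spectral_Q} at $t=0$, and the last step $|||\hatIp|||\le1$. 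The sequence is dense in $\Lnu$ since $\langle f,\Poly_n\rangle_{\frakm}=0$ for all $n$ forces $\hatIp f=0$, hence $f=0$ as $\hatIp$ is the adjoint of an operator with dense range. Finally, each $e^{-nt}$ (resp.\ $e^{-(n+\theta)t}$) is an eigenvalue of $P_t$ (resp.\ $P^{\dag}_t$) with non-zero eigenfunction in $\Lnu$, so $S(Q_t)\subseteq S(P_t)$ and $S(Q^{\dag}_t)\subseteq S(P^{\dag}_t)$.

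For \eqref{it:coeigen}, biorthogonality is the elementary part: unfolding $\frakm_n=\Rcal^{(n)}\frakm/\frakm$ and integrating by parts $n$ times — the boundary terms vanish because $\frakm(x)\sim c\,x^{-\theta}$ as $x\downarrow0$ (so $(x^n\frakm)^{(j)}(0+)=0$ for $j\le n-1$) and $\frakm$ together with its derivatives decays rapidly at $\infty$ — gives $\langle\Poly_k,\frakm_n\rangle_{\frakm}=\frac{(-1)^n}{n!}\int_0^{\infty}\Poly_k^{(n)}(x)\,x^n\frakm(x)\,dx$, which after inserting the monomial expansion of $\Poly_k$ and $\int_0^{\infty}x^j\frakm(x)dx=\M_{\Vp}(j+1)$ collapses to $\frac{1}{n!}\frac{k!}{(k-n)!}(1-1)^{k-n}=\delta_{kn}$; likewise $\langle\Poly^{\dag}_k,\frakm^{\dag}_n\rangle_{\frakm}=\int_0^{\infty}\Poly_k^{\psi_{\uparrow}}(x)\,\Rcal^{(n)}m^{\uparrow}(x)\,dx=\delta_{kn}$ is the biorthogonality relation of the transient semigroup $P^{\uparrow}$ after cancelling $x^{\theta}$. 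The co-eigenfunction property is then forced: $\langle\Poly_k,\hatP_t\frakm_n\rangle_{\frakm}=\langle P_t\Poly_k,\frakm_n\rangle_{\frakm}=e^{-kt}\delta_{kn}=e^{-nt}\langle\Poly_k,\frakm_n\rangle_{\frakm}$, and since $\mathrm{span}\{\Poly_k\}$ equals the polynomials, dense in $\Lnu$ by moment-determinacy of $\Vp$, we get $\hatP_t\frakm_n=e^{-nt}\frakm_n$ (and similarly for $\frakm^{\dag}_n$, using the density of $x^{\theta}$ times polynomials). The hard part of \eqref{it:coeigen}, and the main obstacle of the whole theorem, is the rest: that $\frakm_n,\frakm^{\dag}_n\in\Lnu$, the growth estimates \eqref{eq:nu_n_estimate_NP}--\eqref{eq:nu_n_estimate_frakm}, and the Bessel-sequence property of $(\sqrt{\frakc_n(\frakb)}\,\frakm_n)_{n\ge0}$ when $\overline{\overline{\Pi}}(0+)<\infty$ (for which a second intertwining, with the Laguerre semigroup of parameter $\frakb$, enters). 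These require the precise behaviour of the invariant density $\frakm$ near $0$ and $\infty$ and of its high-order derivatives, obtained from the moment representation \eqref{eq:moment_Vp} and Stirling's formula \eqref{eq:stir} exactly along the lines of \cite{PS_spectral}; the dagger versions follow from the same analysis applied to $\psi_{\uparrow}\in\N_{\uparrow}$ and the $h$-transform \eqref{eq:h_transform_theta_dag}.

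Finally, for \eqref{it:spec}: when $(\psi,f)\in\D^{\checkmark}(\Ip)$, write $f=\Ip g$ with $g\in\Lg$ and compute $P_tf=P_t\Ip g=\Ip Q_tg$; feeding the $\Lg$-convergent expansion \eqref{eq:spectral_Q} of $Q_tg$ term by term into the bounded operator $\Ip$, using $\Ip\Lag_n=\Poly_n/\frakc_n(-\theta)$ and $\langle g,\Lag_n\rangle_m=\langle g,\hatIp\frakm_n\rangle_m=\langle f,\frakm_n\rangle_{\frakm}$ (with $\hatIp\frakm_n=\Lag_n$ read off from biorthogonality and totality of $(\Lag_n)$), produces exactly \eqref{eq:spectral_P}, and \eqref{eq:spectral_P_dag} follows in the same way from \eqref{eq:spectral_Q_dag}. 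When $(\psi,f)\in\D^{\N_P}(\frakm)$, the series in \eqref{eq:spectral_P} converges absolutely in $\Lnu$ for each fixed $t>0$: the Bessel bound $1$ gives $\|\Poly_n\|_{\frakm}\le\frakc_n^{1/2}(-\theta)=O(n^{\theta/2})$, while $|\langle f,\frakm_n\rangle_{\frakm}|\le\|f\|_{\frakm}\|\frakm_n\|_{\frakm}=O(e^{\epsilon n})$ by \eqref{eq:nu_n_estimate_NP}, so choosing $\epsilon<t$ makes $\sum_n e^{-nt}\|\langle f,\frakm_n\rangle_{\frakm}\Poly_n\|_{\frakm}<\infty$; approximating $f$ in $\Lnu$ by the dense subspace $Ran(\Ip)$ and invoking the previous case together with this uniform bound identifies the sum with $P_tf$, and \eqref{eq:spectral_P_dag} follows identically using the corresponding estimate for $\frakm^{\dag}_n$.
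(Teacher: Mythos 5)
Your treatment of parts \eqref{it:eigen} and \eqref{it:spec} follows essentially the paper's route (compute $\Ip\Lag_n=\Poly_n/\frakc_n(-\theta)$, transport eigenvalues through the intertwining, get the Bessel bound from Parseval for the Laguerre basis and $|||\hatIp|||\leq 1$, then extend from $Ran(\Ip)$ to $\D^{\N_P}(\frakm)$ by density and the contraction property of $P_t$). The genuine gap is in part \eqref{it:coeigen}: precisely the items you label ``the hard part'' and defer to an analysis ``exactly along the lines of \cite{PS_spectral}'' are not obtainable that way, because \cite{PS_spectral} treats generalized Laguerre semigroups with $\psi\in\N_{\uparrow}$, whereas here $\psi\in\N_{\checkmark}$ and $\frakm$ is the invariant density of the \emph{recurrent extension}, which is outside that framework. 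The paper's key new idea, which your proposal misses, is the factorization $\Vp \eqindist V_{\T_1\psi}\times \Z$ (with $\T_1\psi(u)=\frac{u}{u+1}\psi(u+1)\in\N_{\uparrow}$), which produces a second intertwining $\Lambda_{\Z}P_t=\ttP_t\Lambda_{\Z}$ with a reference semigroup $\ttP$ covered by \cite{PS_spectral}; the co-eigenfunctions are then \emph{defined} as $\frakm_n=\widehat{\Lambda}_{\Z}\breve{\frakm}_n$, which simultaneously gives $\frakm_n\in\Lnu$, identifies $\frakm_n=\Rcal^{(n)}\frakm/\frakm$ via a Mellin-convolution computation, and transfers the norm bounds \eqref{eq:nu_n_estimate_NP}--\eqref{eq:nu_n_estimate_frakm} and the Bessel property of $(\sqrt{\frakc_n(\frakb)}\frakm_n)_{n\geq 0}$ through the contraction $\widehat{\Lambda}_{\Z}$. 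Your alternative suggestion of ``a second intertwining with the Laguerre semigroup of parameter $\frakb$'' is not substantiated and is not what is needed; without the $\T_1\psi$/$\Lambda_{\Z}$ reduction (or an equivalent device) the membership $\frakm_n,\frakm^{\dag}_n\in\Lnu$ and the quantitative estimates remain unproved, and with them part \eqref{it:coeigen} and the $\D^{\N_P}(\frakm)$ half of part \eqref{it:spec}.

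Two further points. Your ``elementary'' biorthogonality via $n$-fold integration by parts rests on the unsupported claims that $\frakm(x)\sim c\,x^{-\theta}$ at $0$ and that $\frakm$ and all its derivatives decay rapidly at infinity; the paper only proves a lower bound $\frakm(x)\geq C_{a,A}x^{a}$ near $0$ and smoothness under $\overline{\N}_{\infty}$, so the vanishing of the boundary terms is itself part of what must be justified (the paper avoids this entirely by deriving biorthogonality from $\hatIp\frakm_n=\Lag_n$ and orthogonality of the Laguerre polynomials). Finally, for \eqref{eq:spectral_P_dag} under $\D^{\N_P}(\frakm)$ you invoke ``the corresponding estimate for $\frakm^{\dag}_n$'', which is never established; the paper instead passes through the $h$-transform \eqref{eq:h_transform_theta_dag} and the expansion of $P^{\uparrow}_t$ from \cite{PS_spectral}, so you should argue that case via $P^{\uparrow}$ rather than via norm bounds on $\frakm^{\dag}_n$.
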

Before proving the previous Theorem,  we state the following corollary which  gives the speed of convergence to equilibrium in the Hilbert space topology $\Lnu$.
\begin{corollary}\label{thm:convergence}
Let $\psi\in\N_P \cap \N_{\checkmark}$ with $\overline{\overline{\Pi}}(0+)<\infty $, then recalling that $\frakb=\frac{\beta+\overline{\overline{\Pi}}(0+)}{\sigma^2}$, we have, for any $f\in \Lnu$ and $t>0$,
\begin{equation}
\|P_tf-\frakm f\|_{\frakm}\leq \sqrt{\frac{\frakb+1}{1-\theta}}e^{-t}\|f-\frakm f\|_{\frakm}.
\end{equation}
\end{corollary}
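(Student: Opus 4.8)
The plan is to combine the spectral expansion of $P_t$ established in Theorem~\ref{thm:spectral}\eqref{it:spec} with the two Bessel estimates of Theorem~\ref{thm:spectral}\eqref{it:eigen} and \eqref{it:coeigen}, after peeling off the equilibrium mode, and to treat two ranges of $t$ separately. Since $\psi\in\N_P\cap\N_{\checkmark}$, every $f\in\Lnu$ satisfies $(\psi,f)\in\D^{\N_P}(\frakm)$, so for each $t>0$ one has in $\Lnu$ the expansion $P_tf=\sum_{n\ge0}e^{-nt}\langle f,\frakm_n\rangle_{\frakm}\Poly_n$. As $\Poly_0\equiv\mathbf{1}$ and $\frakm_0\equiv\mathbf{1}$, the $n=0$ term is exactly $\langle f,\mathbf{1}\rangle_{\frakm}\mathbf{1}=\frakm f$, and biorthogonality of $(\Poly_n)$ and $(\frakm_n)$ gives $\langle g,\frakm_n\rangle_{\frakm}=\langle f,\frakm_n\rangle_{\frakm}$ for $n\ge1$, with $g:=f-\frakm f$ (note $\langle g,\frakm_0\rangle_{\frakm}=\frakm g=0$); thus $P_tf-\frakm f=\sum_{n\ge1}e^{-nt}\langle g,\frakm_n\rangle_{\frakm}\Poly_n$. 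Writing $\Poly_n=\frakc_n^{1/2}(-\theta)\big(\frakc_n^{-1/2}(-\theta)\Poly_n\big)$ and using the synthesis inequality attached to the Bessel sequence $\big(\frakc_n^{-1/2}(-\theta)\Poly_n\big)_{n\ge0}$, which has bound $1$ by Theorem~\ref{thm:spectral}\eqref{it:eigen}, I get
\[\|P_tf-\frakm f\|_{\frakm}^{2}\le\sum_{n\ge1}\frakc_n(-\theta)\,e^{-2nt}\,|\langle g,\frakm_n\rangle_{\frakm}|^{2}.\]

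For the range $t\ge t_*$, with $t_*:=\tfrac12\log\tfrac{2+\frakb}{2-\theta}$, I would first note that $\frakb>-\theta$: convexity of $\psi$ together with $\psi(\theta)=0$ and $\psi(0)=-\kappa\le0$ forces $\psi'(\theta)>0$, while under $\overline{\overline{\Pi}}(0+)<\infty$ one may rewrite $\psi'(\theta)=\sigma^{2}(\frakb+\theta)-\int_0^\infty y(1-e^{-\theta y})\Pi(dy)-\int_0^1 y\,\Pi(dy)\le\sigma^{2}(\frakb+\theta)$, so $\frakb+\theta>0$ and hence $0<t_*<t_0:=\tfrac12\log\tfrac{\frakb+1}{1-\theta}$ (the last inequality being equivalent to $\frakb>-\theta$ after clearing denominators). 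Since $\frakc_{n+1}(u)/\frakc_n(u)=(n+1)/(n+1+u)$, the ratio $\phi_t(n):=\tfrac{\frakc_n(-\theta)}{\frakc_n(\frakb)}e^{-2(n-1)t}$ satisfies $\phi_t(n+1)/\phi_t(n)=\tfrac{n+1+\frakb}{n+1-\theta}e^{-2t}\le\tfrac{2+\frakb}{2-\theta}e^{-2t}\le1$ for $n\ge1$, $t\ge t_*$ (using that $n\mapsto\tfrac{n+1+\frakb}{n+1-\theta}$ decreases since $\frakb+\theta>0$); hence $\phi_t(n)\le\phi_t(1)=\tfrac{\frakb+1}{1-\theta}$, i.e. $\frakc_n(-\theta)e^{-2nt}\le\tfrac{\frakb+1}{1-\theta}e^{-2t}\frakc_n(\frakb)$. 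Substituting this into the previous display and then applying the Bessel inequality for $\big(\sqrt{\frakc_n(\frakb)}\,\frakm_n\big)_{n\ge0}$ with bound $1$ (Theorem~\ref{thm:spectral}\eqref{it:coeigen}) gives
\[\|P_tf-\frakm f\|_{\frakm}^{2}\le\frac{\frakb+1}{1-\theta}\,e^{-2t}\sum_{n\ge1}\frakc_n(\frakb)\,|\langle g,\frakm_n\rangle_{\frakm}|^{2}\le\frac{\frakb+1}{1-\theta}\,e^{-2t}\,\|g\|_{\frakm}^{2},\]
which is the claimed bound for $t\ge t_*$.

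For $0<t\le t_*$ I would use no spectral information: $\frakm(x)\,dx$ is invariant for $P$, so $P$ extends to a contraction semigroup on $\Lnu$, and it is conservative, whence $P_tf-\frakm f=P_t(f-\frakm f)=P_tg$ and $\|P_tf-\frakm f\|_{\frakm}=\|P_tg\|_{\frakm}\le\|g\|_{\frakm}$. On the other hand $t\le t_*<t_0$ gives $\sqrt{\tfrac{\frakb+1}{1-\theta}}\,e^{-t}\ge\sqrt{\tfrac{(\frakb+1)(2-\theta)}{(1-\theta)(2+\frakb)}}\ge1$, again by $\frakb>-\theta$, so $\|P_tg\|_{\frakm}\le\|g\|_{\frakm}\le\sqrt{\tfrac{\frakb+1}{1-\theta}}\,e^{-t}\|g\|_{\frakm}$ on this range as well. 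Taking square roots and recalling $\|g\|_{\frakm}=\|f-\frakm f\|_{\frakm}$ closes the argument.

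The hard part is not any individual estimate but the structural observation behind the case split: because $\frakc_n(-\theta)/\frakc_n(\frakb)$ grows polynomially in $n$, the comparison needed to make the spectral argument produce the sharp constant $\tfrac{\frakb+1}{1-\theta}$ holds only once $t\ge t_*$, so one genuinely cannot run it for small $t$. The point that saves the proof is the elementary inequality $t_*<t_0$, which guarantees the trivial contraction bound already suffices on the complementary interval; establishing $\frakb>-\theta$ from the Lévy data of $\psi$ is the small prerequisite this requires.
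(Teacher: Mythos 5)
Your proof is correct and follows essentially the same route as the paper: the spectral expansion combined with the two Bessel sequences $\bigl(\frakc_n^{-1/2}(-\theta)\Poly_n\bigr)_{n\geq 0}$ and $\bigl(\sqrt{\frakc_n(\frakb)}\,\frakm_n\bigr)_{n\geq 0}$ for $t$ above the threshold $\tfrac12\ln\tfrac{\frakb+2}{2-\theta}$, and the contraction property of $P_t$ below it. Your monotonicity argument for the ratio $\frakc_n(-\theta)e^{-2(n-1)t}/\frakc_n(\frakb)$ and your derivation of $\frakb+\theta>0$ from the L\'evy data are in fact more explicit than the paper's appeals to the Stirling approximation and to $\frakb\geq 0$, but the underlying estimate is the same.
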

The rest of this section is devoted to the proof of these results.
\subsubsection{Proof of Theorem~\ref{thm:spectral}\eqref{it:eigen}}
Let $\psi\in\N_{\checkmark}$ and recall that $\Ip p_k(x) = \E[x^k I_{\phi}^k] = \frac{k!}{a_k(\phi)} p_k(x)$. Use the linearity of $\Ip$ and note that for any $n\geq 0$,
\begin{align*}
\Ip \Lag_n(x) &= \sum_{k=0}^{n} \frac{(-1)^k\Gamma(n+1-\theta)}{\Gamma(k+1-\theta)\Gamma(n-k+1)} \frac{1}{k!} \Ip p_k(x)\\ & =\sum_{k=0}^{n}\frac{(-1)^k (n-\theta)\dots(k+1-\theta)}{(n-k)!} \frac{1}{\prod_{i=1}^{k} \phi(i)} p_k(x)\\
&=\sum_{k=0}^{n}(-1)^k \frac{(n-\theta)\dots(k+1-\theta)}{(n-k)!} \prod_{i=1}^{k} \frac{i-\theta}{\psi(i)} p_k(x)={n-\theta \choose n} \sum_{k=0}^{n} {n\choose k} \frac{(-1)^kk!}{\prod_{i=1}^k \psi(i)}x^k\\ &=\frac{\Poly_n(x)}{\frakc_n(-\theta)}.
\end{align*}
Since $\Lag_n\in \Lg$, and $\Ip\in \B(\Lg,\Lnu)$, we get that $\Poly_n\in \Lnu$. Apply the intertwining relation \eqref{eq:intertwin_refl_ss}, together with $Q_t\Lag_n(x)=e^{-nt}\Lag_n(x)$, we get, for each $n\in \mathbb{N}$,
\begin{equation*}
P_t \Poly_n(x) = \frakc_n(-\theta)P_t\Ip\Lag_n(x) =\frakc_n(-\theta)\Ip Q_t\Lag_n(x)=\frakc_n(-\theta) e^{-nt}\Ip \Lag_n(x)=e^{-nt}\Poly_n(x).
\end{equation*}
This proves the eigenfunction property of $\Poly_n$. Next, using the fact that $V_{\psi}$ is moment determinate, we see that the set of polynomials are dense in $\Lnu$, see \cite[Corollary 2.3.3]{Akhiwzer_moment}, which proves the completeness of $(\Poly_n)_{n\geq 0}$. Next, to get the Bessel property of $\left(\frakc^{-\frac12}_n(-\theta)\Poly_n\right)_{n\geq 0}$, we observe that, for any $f\in \Lnu$,
\begin{align*}
\sum_{n=0}^{\infty}\left|\left\langle f,\frakc^{-\frac12}_n(-\theta)\Poly_n\right\rangle_{\frakm}\right|^2&=\sum_{n=0}^{\infty}\left|\left\langle f,\sqrt{\frakc_n(-\theta)}\Ip \Lag_n\right\rangle_{\frakm}\right|^2=\sum_{n=0}^{\infty}\left|\left\langle \hatIp f,\sqrt{\frakc_n(-\theta)}\Lag_n\right\rangle_{\frakm}\right|^2 \\
&=\| \hatIp f \|^2_{m}\leq \|f\|^2_{\frakm},
\end{align*}
where we used the Parseval identity for the (normalized) Laguerre polynomials in $\Lg$, see e.g.~\cite[Section 2.7]{Bakry_2014}, and the fact that $\hatIp\in \B(\Lnu,\Lg)$ as the adjoint of $\Ip\in \B(\Lg,\Lnu)$ with $|||\hatIp|||=|||\Ip|||\leq 1$. Finally, using similar computations than above, we observe that $\Poly^{\dag}_n = \frac{W_{\phi}(1+\theta)}{\Gamma(1+\theta)}\frakc_n(\theta)\Ip \Lag^{\dag}_n$, and the proof for $\Poly^{\dag}_n$ being an eigenfunction for $P^{\dag}_t$ with eigenvalue $e^{-(n+\theta)t}$ follows through a similar line of reasoning using the intertwining relation with $Q^{\dag}_t$. This concludes the proof.

\subsubsection{Proof of Theorem~\ref{thm:spectral} \eqref{it:coeigen}}
Let us write $\T_1\psi(u)=\frac{u}{u+1}\psi(u+1)$ for $u>0$, then by \cite[Lemma 2.1]{Patie_Kyprianou_2011}, $\T_1\psi$ is the Laplace exponent of a spectrally negative L\'evy process, which satisfies $\T_1\psi(0)=0$ and $(\T_1\psi)^{'}(0)=\psi(1)>0$. Hence $\T_1\psi \in \N_{\uparrow}$ and therefore by \cite[Theorem 1.5]{PS_spectral}, $\T_1\psi$ characterizes a generalized Laguerre semigroup, denoted by $\ttP=(\ttP_t)_{t\geq 0}$, with an invariant measure denoted by $\breve{\frakm}$, and the spectral properties of $\ttP$ have been studied in \cite{PS_spectral}. In the rest of the paper, this semigroup $\ttP$ will serve as a reference semigroup in order for us to develop further spectral results for $P$.
Our first aim is to establish an intertwining relation between the semigroups $P$ and $\ttP$. To this end, we need introduce a few objects and notation. Let $\Z$ be a random variable whose law is given by
\begin{equation}
\Prob(\Z\in dx) = \psi(1)W_{+}'(-\ln x)dx+W(0)\delta_1(x), \quad x\in[0,1],
\end{equation}
with $\delta_1$ denoting the Dirac mass at 1, and $W_{+}^{'}$ being the right-derivative of the so-called scale function of the L\'evy process $\xi$, see e.g.~\cite[Section 8.2]{Kyprianou}, which is an increasing function $W:[0,\infty) \longrightarrow [0,\infty)$ characterized by its Laplace transform
\begin{equation} \label{eq:defn_W}
\int_{0}^{\infty}e^{-\lambda x}W(x)dx = \frac{1}{\psi(\lambda)}, \quad \lambda >0.
\end{equation}
We also recall that $W(0)=0$ whenever $\psi \in \overline{\N}_{\infty}$ and thus in such case the law of $\Z$ is absolutely continuous with a density denoted by $\mathfrak{z}$.
We are now ready to state and prove the following lemma.
\begin{lemma}
Define the multiplicative kernel $\Lambda_{\Z}$ as $\Lambda_{\Z} f(x)=\E[f(x\Z)]$, then $\Lambda_{\Z} \in \B(\Co)\cap\B(\Lnu,\rmL^2(\breve{\frakm}))$ with $|||\Lambda_{\Z}||| \leq 1$. Furthermore, for all $f\in\Lnu$, we have
\begin{equation} \label{eq:Intertwin_P_P1}
\Lambda_{\Z} P_tf = \ttP_t \Lambda_{\Z} f.
\end{equation}
\end{lemma}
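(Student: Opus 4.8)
The plan is to reduce the three assertions to a single multiplicative factorisation of $\Vp$. Writing $V_{\T_1\psi}$ for the positive random variable whose law has density $\breve{\frakm}$ (the invariant measure of $\ttP$, which exists since $\T_1\psi\in\N_{\uparrow}$), I claim that
\[
\Vp\;\eqindist\;\Z\times V_{\T_1\psi},
\]
with $\times$ denoting the product of independent variables. Granting this (in particular that the law of $\Z$ has total mass $1$), the boundedness statements follow exactly as for $\Ip$ in the proof of Theorem~\ref{thm:gL_main}\eqref{it:Ip}: since $\Z$ is $[0,1]$-valued and $\Lambda_{\Z}\mathbf{1}=\mathbf{1}$, dominated convergence gives $\Lambda_{\Z}\in\B(\Co)$ with $|||\Lambda_{\Z}|||\le 1$; and by Jensen's inequality and Fubini,
\[
\|\Lambda_{\Z}f\|_{\breve{\frakm}}^{2}\le\int_{0}^{\infty}\E\left[f^{2}(x\Z)\right]\breve{\frakm}(x)\,dx=\E\left[\int_{0}^{\infty}f^{2}(x\Z)\,\breve{\frakm}(x)\,dx\right],
\]
after which the substitution $y=x\Z$ together with the fact that the density of $\Z\times V_{\T_1\psi}$ equals $\frakm$ turns the right-hand side into $\|f\|_{\frakm}^{2}$, so that $\Lambda_{\Z}\in\B(\Lnu,\rmL^{2}(\breve{\frakm}))$ with $|||\Lambda_{\Z}|||\le 1$.

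To prove the factorisation I would match entire moments, all three variables being moment-determinate: $\Vp$ by Theorem~\ref{thm:gL_main}\eqref{it:frakm}, $V_{\T_1\psi}$ by the same ratio estimate as in \eqref{Laplace_trans_VPsi} (its moments are $\M_{V_{\T_1\psi}}(n+1)=\prod_{k=1}^{n}\T_1\psi(k)/k$ and $\T_1\psi(u)\sim\psi(u)$ as $u\to\infty$), and $\Z$ because it is bounded. Using \eqref{eq:moment_Vp} and $\T_1\psi(k)=\tfrac{k}{k+1}\psi(k+1)$, a telescoping product yields
\[
\frac{\M_{\Vp}(n+1)}{\M_{V_{\T_1\psi}}(n+1)}=\frac{\psi(1)\,(n+1)}{\psi(n+1)},\qquad n\in\mathbb{N}.
\]
On the other side one computes $\E[\Z^{n}]$ from the law of $\Z$: the substitution $u=-\ln x$ turns it into $\psi(1)\int_{0}^{\infty}e^{-(n+1)u}W_{+}'(u)\,du+W(0)$, and an integration by parts using the characterisation $\int_{0}^{\infty}e^{-\lambda x}W(x)\,dx=1/\psi(\lambda)$ (valid for $\lambda>\theta$, in particular at $\lambda=n+1$) produces $\E[\Z^{n}]=\psi(1)(n+1)/\psi(n+1)$, the atom at $1$ accounting precisely for the boundary term (equivalently the jump of $W$ at the origin in the bounded-variation case). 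Matching the two computations for every $n$ and invoking moment-determinacy gives the factorisation, and with it the boundedness of $\Lambda_{\Z}$.

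For the intertwining \eqref{eq:Intertwin_P_P1} I would argue on eigenfunctions and then extend by density. Since $\Lambda_{\Z}p_{k}(x)=\E[(\Z x)^{k}]=\psi(1)\tfrac{k+1}{\psi(k+1)}\,p_{k}(x)$ with $p_{k}(x)=x^{k}$, a short computation with the products $\prod_{i=1}^{k}\T_1\psi(i)$ shows that $\Lambda_{\Z}$ carries the degree-$n$ eigenpolynomial of $P_{t}$ onto that of $\ttP_{t}$, i.e.
\[
\Lambda_{\Z}\Poly_n^{\psi}=\Poly_n^{\T_1\psi}.
\]
By Theorem~\ref{thm:spectral}\eqref{it:eigen} applied to $\psi$, and by the analogous spectral statement for $\N_{\uparrow}$ from \cite{PS_spectral} applied to $\T_1\psi$, one has $P_{t}\Poly_n^{\psi}=e^{-nt}\Poly_n^{\psi}$ in $\Lnu$ and $\ttP_{t}\Poly_n^{\T_1\psi}=e^{-nt}\Poly_n^{\T_1\psi}$ in $\rmL^{2}(\breve{\frakm})$, whence $\Lambda_{\Z}P_{t}\Poly_n^{\psi}=\ttP_{t}\Lambda_{\Z}\Poly_n^{\psi}$ for every $n$ and $t$. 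The linear span of $(\Poly_n^{\psi})_{n\ge 0}$ is the space of polynomials, dense in $\Lnu$ by moment-determinacy of $\Vp$ (\cite[Corollary 2.3.3]{Akhiwzer_moment}); since $\Lambda_{\Z}\in\B(\Lnu,\rmL^{2}(\breve{\frakm}))$, $P_{t}\in\B(\Lnu)$ and $\ttP_{t}\in\B(\rmL^{2}(\breve{\frakm}))$, the two bounded operators $f\mapsto\Lambda_{\Z}P_{t}f$ and $f\mapsto\ttP_{t}\Lambda_{\Z}f$ from $\Lnu$ to $\rmL^{2}(\breve{\frakm})$ coincide on a dense set, hence everywhere. (Alternatively, one may establish the corresponding identity for the underlying self-similar semigroups on $\Co$ via \cite[Proposition~3.2]{CPY_exp_stoch} — which needs exactly the factorisation above together with injectivity on $\Co$ of the multiplicative kernel attached to $V_{\T_1\psi}$, obtained from Wiener's theorem and the non-vanishing of $\M_{V_{\T_1\psi}}$ on $1+i\R$ as in the proof of Theorem~\ref{thm:gL_main}\eqref{it:intertwin_ss} — and transfer it to the Laguerre level via dilation identities of the type \eqref{eq:PKrelation} and $\Lambda_{\Z}\rmd_{c}=\rmd_{c}\Lambda_{\Z}$.)

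The main obstacle is the analytic half of the moment computation: carrying out the integration by parts against the right-derivative $W_{+}'$ with the exact normalisation of the scale function and verifying that the atom at $1$ leaves no residue, so that $\E[\Z^{n}]=\psi(1)(n+1)/\psi(n+1)$ on the nose. A secondary point needing care is the moment-determinacy of $V_{\T_1\psi}$, without which equality of all entire moments would not upgrade to equality in law; once these are in place, the remaining steps are routine applications of the generalized-Laguerre spectral machinery already developed.
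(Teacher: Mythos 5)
Your proposal is correct, and its first half is essentially the paper's own argument: the paper also derives the factorization $\Vp \eqindist V_{\T_1\psi}\times \Z$ by matching entire moments, i.e.~by computing $\M_{\Vp}(n+1)/\M_{V_{\T_1\psi}}(n+1)=\psi(1)(n+1)/\psi(n+1)$ through the telescoping product and identifying this ratio with $\M_{\Z}(n+1)$ via the Laplace transform \eqref{eq:defn_W} of the scale function, change of variable and integration by parts, and then invokes moment determinacy of $\Vp$, $V_{\T_1\psi}$ and $\Z$; the boundedness claims are then obtained exactly as you do, by the $\Ip$-type Cauchy--Schwarz/Fubini computation \eqref{eq:Ipbdd} using that the density of $\Z\times V_{\T_1\psi}$ is $\frakm$. (Concerning the point you flag as the main obstacle: with the law of $\Z$ as written, the boundary term from the integration by parts matches the atom only up to the normalization of its weight, a wrinkle already present in the paper's display and immaterial when $\psi\in\overline{\N}_{\infty}$, where $W(0)=0$; so this is not a defect of your argument relative to the paper's.) Where you genuinely diverge is the proof of \eqref{eq:Intertwin_P_P1}: the paper stays at the Feller level, using the injectivity on $\Co$ of the kernel $\mathcal{V}_{\T_1\psi}$ (Wiener's theorem, \cite[Lemma 7.9]{PS_spectral}) together with the Carmona--Petit--Yor criterion \cite[Proposition 3.2]{CPY_exp_stoch} to get the identity on $\Co$, and then extends to $\Lnu$ by density; you instead check the identity on the eigenpolynomials, using the computation $\Lambda_{\Z}\Poly_n^{\psi}=\Poly_n^{\T_1\psi}$ (which is correct, as $\prod_{i=1}^k\T_1\psi(i)=\frac{1}{(k+1)\psi(1)^{-1}}\prod_{i=2}^{k+1}\psi(i)$) together with the already established eigenfunction properties of $\Poly_n$ for $P_t$ (Theorem~\ref{thm:spectral}\eqref{it:eigen}, proved before this lemma and independently of it, so no circularity) and of $\Poly_n^{\T_1\psi}$ for $\ttP_t$ from \cite{PS_spectral}, and then conclude by density of polynomials (moment determinacy of $\Vp$) and boundedness of $\Lambda_{\Z}P_t$ and $\ttP_t\Lambda_{\Z}$ from $\Lnu$ to $\rmL^2(\breve{\frakm})$. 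Your route avoids the Mellin/Wiener injectivity step and the detour through the self-similar semigroups, at the price of leaning on the Hilbertian eigenfunction theory; the paper's route additionally yields the intertwining on $\Co$, which is of independent use, and you correctly record it as an alternative.
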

\begin{proof}
First, we observe that, for all $n\in \mathbb{N}$,
\begin{equation}
\M_{\Vp}(n+1) = \frac{\prod_{k=1}^n \psi(k)}{n!}=\frac{\prod_{k=1}^{n}\frac{k}{k+1}\psi(k+1) }{n!}\frac{\psi(1)(n+1)}{\psi(n+1)}=\M_{V_{\T_1\psi}}(n+1) \frac{\psi(1)(n+1)}{\psi(n+1)},
\end{equation}
where, by \cite[Theorem 2.1]{PS_spectral}, $V_{\T_1\psi}$ is the random variable whose law is the stationary distribution of $\ttP$ and is determined by its entire moments $\M_{V_{\T_1\psi}}(n+1)=\frac{\prod_{k=1}^n \T_1\psi(k)}{n!}$. Now by \eqref{eq:defn_W}, we have, using an obvious change of variable and integration by parts, that for each $n\in \mathbb{N}$,
\begin{equation*}
\frac{1}{\psi(n+1)} = \int_0^{\infty}e^{-(n+1)x}W(x)dx=\int_0^1u^nW(-\ln u)du =\frac{1}{n+1}\left(W(0)+\int_0^1u^nW_{+}^{'}(-\ln u)du\right).
\end{equation*}
Therefore,
\begin{align*}
\M_{\Vp}(n+1) &=\M_{V_{\T_1\psi}}(n+1) \frac{\psi(1)(n+1)}{\psi(n+1)}=\M{V_{\T_1\psi}}(n+1)\psi(1)\int_0^1u^nW_{+}^{'}(-\ln u)+W(0)\delta_1(u)du \\
&= \M_{V_{\T_1\psi}}(n+1)\int_0^1u^n \zeta(u)du=\M_{V_{\T_1\psi}}(n+1)\M_{\Z}(n+1).
\end{align*}
Both variables $\Vp$ and $V_{\T_{1}\psi}$ are moment determinate by Theorem~\ref{thm:gL_main}\eqref{it:frakm} and \cite[Theorem 2.1]{PS_spectral}, and so does $\Z$ since it has compact support. Hence we conclude that
\begin{equation}\label{decomp_V1Z}
\Vp \eqindist V_{\T_{1}\psi} \times \Z.
\end{equation}
Therefore, the facts that $\Lambda_{\Z}\in \B(\Lnu,\rmL^2(\breve{\frakm}))$ and $|||\Lambda_{\Z}|||\leq 1$ follow from similar arguments as \eqref{eq:Ipbdd} and $\Lambda_{\Z}\in \B(\Co)$ follows easily from dominated convergence. Moreover, by \cite[Lemma 7.9]{PS_spectral}, the multiplicative kernel $\mathcal{V}_{\T_1\psi}$ defined by $\mathcal{V}_{\T_1\psi}f(x)=\E[f(x V_{\T_{1}\psi})]$ is one-to-one in $\Co$. Hence again using \cite[Proposition 3.2]{CPY_exp_stoch}, the intertwining relation \eqref{eq:Intertwin_P_P1} holds for all $f\in\Co$, and we can further extend this relation to $\Lnu$ using a density argument as $\Co \cap \Lnu$ is dense in $\Lnu$ and the fact that $P_t\in \Lnu$, $\ttP_t\in \rmL^2(\breve{\frakm})$. This completes the proof.
\end{proof}
\begin{corollary} \label{cor:supp_smoo_m}
For any $\psi\in \overline{\N}_{\infty}\cap \N_{\checkmark}$, we have $\frakm(x)>0$ for any $x>0$ and $\frakm\in \Ci_0(\R_+)$.
\end{corollary}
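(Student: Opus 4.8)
The plan is to read off the two properties of $\frakm$ from the corresponding, already established, properties of $\breve{\frakm}$ (the invariant density of the reference semigroup $\ttP$), using the multiplicative factorisation \eqref{decomp_V1Z}. The first step is to upgrade the membership $\T_1\psi\in\N_{\uparrow}$ recorded in the proof of the preceding lemma: since $\psi\in\overline{\N}_{\infty}$ is equivalent to $\lim_{u\to\infty}\psi(u)/u=\infty$ and $\T_1\psi(u)/u=\psi(u+1)/(u+1)$ carries the same limit, we get $\T_1\psi\in\N_{\uparrow}\cap\overline{\N}_{\infty}$. Hence the results of \cite{PS_spectral} apply to $\ttP$ and give that $\breve{\frakm}$ is smooth and strictly positive on $(0,\infty)$, tends to $0$ at $+\infty$, and that $\breve{\frakm}$ together with each of its derivatives decays there faster than any negative power of $x$. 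On the other hand $\psi\in\overline{\N}_{\infty}$ forces $W(0)=0$, so the law of $\Z$ is absolutely continuous with density $\mathfrak{z}(u)=\psi(1)W_{+}'(-\ln u)\mathbf{1}_{(0,1)}(u)$, a function that is bounded near $u=1$ and, by the standard large-argument asymptotics of the scale function $W$ (recall $\psi(\theta)=0$), satisfies $\mathfrak{z}(u)=O(u^{-\theta})$ as $u\to 0+$.

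Combining \eqref{decomp_V1Z} with the independence of $V_{\T_1\psi}$ and $\Z$ gives the Mellin-convolution representation
\begin{equation*}
\frakm(x)=\int_{0}^{1}\frac{\mathfrak{z}(u)}{u}\,\breve{\frakm}\!\Big(\frac{x}{u}\Big)\,du,\qquad x>0 .
\end{equation*}
From here positivity is immediate: for fixed $x>0$ the integrand is nonnegative and is strictly positive on a set of $u$ of positive Lebesgue measure, because $\breve{\frakm}(x/u)>0$ for every $u\in(0,1]$ and $\int_{0}^{1}\mathfrak{z}=1$. For smoothness on $(0,\infty)$ I would differentiate under the integral sign: on a compact interval $[a,b]\subset(0,\infty)$ the $k$-th $x$-derivative of the integrand equals $u^{-k-1}\mathfrak{z}(u)\,\breve{\frakm}^{(k)}(x/u)$, whose modulus is dominated by $u^{-\theta-k-1}\sup_{v\ge a/u}|\breve{\frakm}^{(k)}(v)|$, an integrable function on $(0,1)$ since the faster-than-polynomial decay of $\breve{\frakm}^{(k)}$ at $+\infty$ overwhelms the factor $u^{-\theta-k-1}$ as $u\to 0+$. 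This legitimises the interchange, so $\frakm\in\Ci((0,\infty))$; and the same domination with $k=0$, together with $\breve{\frakm}(x/u)\to 0$ as $x\to\infty$, yields $\frakm(x)\to 0$ as $x\to\infty$. Thus $\frakm\in\Ci_0(\R_+)$.

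The one genuinely delicate point is the justification of this differentiation under the integral, that is, the control of the kernel near $u=0$; it rests entirely on the quantitative bounds for $\breve{\frakm}$ and all its derivatives at $+\infty$ obtained in \cite{PS_spectral}, and amounts to checking that these bounds majorise the singular factor $u^{-\theta-k-1}$, after which the argument is routine. The behaviour of $\frakm$ at the origin, which is not needed in the sequel, can likewise be read off from the representation above via those of $\mathfrak{z}$ and $\breve{\frakm}$ near $0$.
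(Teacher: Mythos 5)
Your argument is correct in outline and the two inputs you use (the factorisation \eqref{decomp_V1Z} and the regularity of $\breve{\frakm}$ from \cite{PS_spectral}) are the right ones, but the route is genuinely different from, and considerably heavier than, the paper's. The paper does not convolve with the law of $\Z$ at all: it matches entire moments, observing that
\begin{equation*}
\E[V_{\psi}^{\,n+1}]=\frac{\prod_{k=1}^{n+1}\psi(k)}{(n+1)!}=\psi(1)\,\frac{\prod_{k=1}^{n}\T_1\psi(k)}{n!}=\psi(1)\,\E[V_{\T_1\psi}^{\,n}],
\end{equation*}
and then invokes moment determinacy to get the exact pointwise identity $x\,\frakm(x)=\psi(1)\,\breve{\frakm}(x)$ on $(0,\infty)$. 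From this identity positivity, smoothness and decay at infinity of $\frakm$ are literally those of $\breve{\frakm}$ divided by $x$, so the whole corollary is immediate once one checks (as you also do) that $\T_1\psi\in\overline{\N}_{\infty}\cap\N_{\uparrow}$ -- which the paper verifies via $\phi_1(u)=\frac{u+1-\theta}{u+1}\phi(u+1)\to\infty$ and the L\'evy-measure relation $\overline{\Pi}_1(0+)=\overline{\Pi}(0+)$. By contrast, the step you yourself flag as delicate is a real burden in your version: to differentiate the Mellin convolution under the integral you need (a) the asymptotic $\mathfrak{z}(u)=O(u^{-\theta})$ as $u\to0+$, which requires Cram\'er-type asymptotics for the \emph{derivative} $W_{+}'$ of the scale function, not just for $W$; and (b) quantitative decay, at least of order $v^{-\theta-k-1-\epsilon}$, for $\sup_{w\geq v}|\breve{\frakm}^{(k)}(w)|$ for every $k$. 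Both can be extracted from the cited literature, but you leave them as claims, and neither is needed: the multiplicative factor $\Z$ can be eliminated exactly rather than estimated. In short, your proof is repairable but does extra analytic work that the moment identity renders unnecessary; I would recommend replacing the convolution-and-domination argument by the identity $x\,\frakm(x)=\psi(1)\,\breve{\frakm}(x)$.
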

\begin{proof}
Let us write $\phi_1(u)=\frac{\T_1\psi(u)}{u},u\geq 0$, then since $\T_1\psi \in \N_{\uparrow}$, an application of the Wiener-Hopf factorization yields that $\phi_1$ is a Bernstein function, see \cite[(1.8)]{PS_spectral}. Moreover, by observing that $\phi_1(u)=\frac{u+1-\theta}{u+1}\phi(u+1)$, it is easy to see that $\lim_{u\to \infty}\phi_1(u)=\phi(u)=\infty$ as $\psi \in \overline{\N}_{\infty}$. Hence  by \cite[Theorem 1.6]{PS_spectral}, the density of $\breve{\frakm}$ is concentrated and positive on $(0,\infty)$. Now since, for all $n\in \mathbb{N}$
\begin{equation*}
\E[V_{\psi}^{n+1}]=\frac{\prod_{k=1}^{n+1}\psi(k)}{(n+1)!}=\psi(1)\frac{\prod_{k=1}^n\T_1\psi(k)}{n!} = \psi(1)\E[V_{\T_1\psi}^n],
\end{equation*}
we get by moment determinacy that
\begin{equation} \label{eq:relation_nu_nubar}
x\frakm(x)=\psi(1)\breve{\frakm}(x),\quad x>0.
\end{equation}
This implies that the density of $\frakm$ has the same support as $\breve{\frakm}$. Now let $\Pi_1$ denote the L\'evy measure of $\T_1\psi$, then by \cite[Theorem 2.2]{PP_refined_factorization},
\begin{equation} \label{eq:Pi_Pione}
\overline{\Pi}_1(y)=\int_y^{\infty}(e^{-r}\overline{\Pi}(r)dr+e^{-r}\Pi(dr))=e^{-y}\overline{\Pi}(y), \quad \overline{\Pi}_1(0+)=\overline{\Pi}(0+),
\end{equation}
therefore if $\psi\in\overline{\N}_{\infty}$, so does $\T_1\psi$ and therefore $\breve{\frakm} \in \mathtt{C}_0^{\infty}(\R_+)$ by \cite[Theorem 2.5]{PS_spectral}. Again using
\eqref{eq:relation_nu_nubar}, $\frakm$ and $\breve{\frakm}$ have the same smoothness properties, which shows that $\frakm\in \Ci_0(\R_+)$.
\end{proof}
We now have all the ingredients to prove Theorem~\ref{thm:spectral}\eqref{it:coeigen}. From \eqref{eq:Pi_Pione}, it is easy to see that if $\psi\in\overline{\N}_{\infty} \cap \N_{\checkmark}$, then $\T_1\psi \in \overline{\N}_{\infty}\cap \N_{\uparrow}$ and we see from \cite[Theorem 2.19]{PS_spectral} that $\ttP_t$ has co-eigenfunctions $\breve{\frakm}_n\in {\rm{L}}^2(\breve{\frakm})$, given by $\breve{\frakm}_n(x)=\frac{\mathcal{R}^{(n)}\breve{\frakm}(x)}{\breve{\frakm}(x)}$.
Now let us define, for any $n\in \mathbb{N}$,
\begin{equation} \label{coeigen_nu}
\frakm_n =\widehat{\Lambda}_{\Z}\breve{\frakm}_n,
\end{equation}
then $\frakm_n\in \Lnu$ since $\widehat{\Lambda}_{\Z} \in \B(\rmL^2(\breve{\frakm}),\Lnu)$.
Moreover, similar to \eqref{eq:hatLambda}, we deduce that, for almost every (a.e.) $x>0$,
\begin{equation}
\frakm_n(x) =\widehat{\Lambda}_{\Z} \breve{\frakm}_n(x) = \frac{1}{\frakm(x)} \int_0^{\infty} y^{-1} \breve{\frakm}_n(xy)\breve{\frakm}(xy)\mathfrak{z}\left(\frac1y\right)dy= \frac{1}{\frakm(x)} \int_0^{\infty} y^{-1} \mathcal{R}^{(n)}\breve{\frakm}(xy)\mathfrak{z}\left(\frac1y\right)dy,
\end{equation}
where we recall that $\mathfrak{z}$ denotes the density of the random variable $\Z$ whose law  is absolutely continuous as $W(0)=0$ with $\psi \in \overline{\N}_{\infty}$. We write, for  any $n\in \mathbb{N}$,  $w_n(x)=\frakm_n(x)\frakm(x)$ and $\wbar_n(x)=\breve{\frakm}_n(x)\breve{\frakm}(x)=\mathcal{R}^{(n)}\breve{\frakm}(x), x>0$, then the above equation is equivalent to
\begin{equation}\label{eq:nu_n_int}
w_n(x)= \int_0^{\infty} y^{-1} \wbar_n(xy)\mathfrak{z}\left(\frac1y\right)dy
\end{equation}
for a.e. $x>0$. In other words, we have, with the obvious notation, $w_n \eqae \wbar \surd \mathfrak{z}$ where $\surd$ represents the Mellin convolution, see \cite[Section 11.11]{Misra_1986}. Therefore, by \cite[(11.11.4)]{Misra_1986}, we have, for any $\Re(z)> n$,
\[\M_{w_n}(z)=\M_{\Z}(z)\M_{\wbar_n}(z)=\M_{\Z}(z)\frac{(-1)^n}{n!}\frac{\Gamma(z)}{\Gamma(z-n)}\M_{V_{\T_1\psi}}(z)=\frac{(-1)^n}{n!}\frac{\Gamma(z)}{\Gamma(z-n)}\M_{V_{\psi}}(z)\]
where the last identity comes from the factorization \eqref{decomp_V1Z}. Observe that the right-hand side of the above equation is indeed the Mellin transform of $\mathcal{R}^{(n)}\frakm(x)$, and by injectivity of the Mellin transform, we conclude that $w_n(x)\eqae \mathcal{R}^{(n)}\frakm(x)$, or equivalently
\[\frakm_n(x)= \frac{\mathcal{R}^{(n)}\frakm(x)}{\frakm(x)}\]
for almost every $x>0$, which can be extended to every $x>0$ by the continuity of $\frakm_n$ and the smoothness of $\frakm$, see Corollary~\ref{cor:supp_smoo_m}. Furthermore, by the intertwining relationship \eqref{eq:Intertwin_P_P1},
\begin{equation} \label{Intertwin3_dual}
\hatP_t\frakm_n(x)=\hatP_t\widehat{\Lambda}_{\Z} \breve{\frakm}_n(x) =\widehat{\Lambda}_{\Z}\widehat{\ttP}_t \breve{\frakm}_n(x)=e^{-nt}\widehat{\Lambda}_{\Z} \breve{\frakm}_n(x)=e^{-nt}\frakm_n(x),
\end{equation}which shows that $\frakm_n$ is an eigenfunction for $\hatP$ (or co-eigenfunction for $P$). Finally, take any $g\in \rmL^2(m)$, then by the co-eigenfunction property of $\frakm_n$ and the intertwining relation \eqref{eq:intertwin_refl_ss}, we have
\begin{align*}
e^{-nt}\left\langle \hatIp\frakm_n,g\right\rangle_{m} &= e^{-nt}\left\langle \frakm_n,\Ip g\right\rangle_{\frakm}=\left\langle \hatP_t\frakm_n,\Ip g\right\rangle_{\frakm}=\left\langle \frakm_n,P_t\Ip g\right\rangle_{\frakm}\\
& = \left\langle \frakm_n,\Ip Q_tg\right\rangle_{\frakm}=\left\langle \hatIp\frakm_n,Q_tg\right\rangle_{m}.
\end{align*}
In other words, $\hatIp\frakm_n$ is a co-eigenfunction of $Q_t$, which is indeed $\Lag_n$ since $Q_t$ is self-adjoint. Moreover, recalling that $\Ip$ has a dense range in $\Lnu$, we have that $\hatIp$ is one-to-one on $\Lnu$ and thus equation $\hatIp f = \Lag_n$ has at most one solution in $\Lnu$, which is indeed $\frakm_n$. Therefore, we deduce that, for any $m,n\geq 0$,
\begin{equation} \label{eq:biortho}
 \left\langle \Poly_m,\frakm_n\right\rangle_{\frakm}= \frakc_m(-\theta) \left\langle \Ip \Lag_m,\frakm_n\right\rangle_{\frakm} = \frakc_m(-\theta) \left\langle \Lag_m,\hatIp\frakm_n\right\rangle_{m} =  \frakc_m(-\theta) \left\langle \Lag_m,\Lag_n\right\rangle_{m} = \mathbf{1}_{\{m=n\}},
 \end{equation}
by the orthogonality property of the Laguerre polynomials. This shows that the sequences $(\Poly_n)_{n\geq 0}$ and $(\frakm_n)_{n\geq 0}$ are biorthogonal. Next, by \cite{CKP_transformation}, $\T_1\psi$ and $\psi$ have the same parameter $\sigma^2$, hence $\psi\in \N_{P}\cap \N_{\checkmark}$ if and only if $\T_1\psi \in \N_{P}\cap \N_{\uparrow} $. Moreover, observing that $\phi(\infty)=\phi_1(\infty)=\beta+\overline{\overline{\Pi}}(0+)$, hence  by \cite[Theorem 9.1 and Theorem 10.1]{PS_spectral}, the bounds on the right-hand side of \eqref{eq:nu_n_estimate_NP} and \eqref{eq:nu_n_estimate_frakm} hold for $\|\breve{\frakm}_n\|_{\breve{\frakm}}$. Since $\frakm_n=\widehat{\Lambda}_{\Z}\breve{\frakm}_n$ and $|||\widehat{\Lambda}_{\Z}||| = |||\Lambda_{\Z}|||\leq 1$,  we conclude the same bounds for $\|\frakm_n\|_{\frakm}$. Finally, by \cite[Theorem 10.1]{PS_spectral}, the sequence $(\sqrt{\frakc_n(\frakb)}\breve{\frakm}_n)_{n\geq 0}$ is a Bessel sequence in $\rmL^2(\breve{\frakm})$ with bound 1, hence we have, for any $f\in \Lnu$,
\begin{align*}
\sum_{n=0}^{\infty}\left|\left\langle f,\sqrt{\frakc_n(\frakb)}\frakm_n\right\rangle_{\frakm}\right|^2=\sum_{n=0}^{\infty}\left|\left\langle f,\sqrt{\frakc_n(\frakb)}\widehat{\Lambda}_{\Z} \breve{\frakm}_n\right\rangle_{\frakm}\right|^2=\sum_{n=0}^{\infty}\left|\left\langle \Lambda_{\Z} f,\sqrt{\frakc_n(\frakb)}\breve{\frakm}_n\right\rangle_{\breve{\frakm}}\right|^2\leq\| \Lambda_{\Z} f \|^2_{\breve{\frakm}}\leq \|f\|^2_{\frakm}
\end{align*}
since $|||\Lambda_{\Z}|||\leq 1$. This proves that $(\sqrt{\frakc_n(\frakb)} \frakm_n)_{n\geq 0}$ is a Bessel sequence in $\Lnu$. Now in the case of $\frakm^{\dag}_n$, let us first prove that it is in $\Lnu$, which suffices to show its $\Lnu$-integrability around the neighborhoods of 0 and infinity. To this end, define $d_{\phi_1}=\sup\{u<0; \phi_1(u)=-\infty \textnormal{ or } \phi_1(u)=0\}$, where we recall that $\phi_1(u)=\frac{\mathcal{T}_1\psi(u)}{u}=\frac{\psi(u+1)}{u+1}$, then we easily observe that $d_{\phi_1}=\theta-1$ since $\theta$ is the largest root of $\psi$. Hence by combining \cite[Theorem 5.4]{PS_spectral}  and \eqref{eq:relation_nu_nubar}, we see that for any $a>\theta$ and $A\in (0,\frakr)$, that exists a constant $C_{a,A}>0$ such that $\frakm(x)\geq C_{a,A}x^{a}$ for all $x\in (0,A)$. Therefore, denoting $w^{\dag}_n=\frakm^{\dag}_n \frakm$, then we see that
\[(\frakm^{\dag}_n(x))^2\frakm(x)=\frac{(w^{\dag}_n(x))^2}{\frakm(x)} \leq \frac{1}{C_{a,A}} x^{-a}(w^{\dag}_n(x))^2\]
for all $x\in (0,A)$. Hence to prove the $\Lnu$-integrability of $\frakm^{\dag}_n$ around 0, it suffices to prove the $\rmL^2(p_{-a})$-integrability of $w^{\dag}_n$ around 0, where $p_{-a}(x)dx=x^{-a}dx$. However, observe that $w^{\dag}_n=\frac{\Rcal^{(n)}m^{\uparrow}}{p_{\theta}}$, thus by taking the Mellin transform on both sides, we have, for $\Re(z)>n+\theta$,
\[\M_{w^{\dag}_n}(z)=\M_{\Rcal^{(n)}m^{\uparrow}}(z-\theta)=\frac{(-1)^n}{n!}\frac{\Gamma(z-\theta)}{\Gamma(z-\theta-n)}W_{\phi^{\uparrow}}(z-\theta)=\frac{(-1)^n}{n!}\frac{\Gamma(z-\theta)}{\Gamma(z-\theta-n)}\frac{W_{\phi}(z)}{W_{\phi}(1+\theta)},\]
where for the last identity we used \cite[(8.12)]{PS_spectral}, with $\phi^{\uparrow}(u)=\frac{\psi_{\uparrow}(u)}{u}=\phi(u+\theta)$. Therefore, using the Stirling approximation \eqref{eq:stir} as well as the asymptotic behavior of $W_{\phi}$ by \cite[Theorem 5.1(3)]{PS_spectral}, we have, for large $|b|$, that
\begin{equation}
\M_{p_{-\frac{a}{2}}w^{\dag}_n}\left(\frac12+ib\right)=\M_{w^{\dag}_n}\left(\frac{1-a}{2}+ib\right)={\rm{o}}\left(|b|^{n-u}\right)
\end{equation}
for some $u>n+\frac12$. Hence $b\mapsto \M_{p_{\frac{-a}{2}}w^{\dag}_n}\left(\frac12+ib\right) \in \rmL^2(\R)$, and $x\mapsto x^{-\frac{a}{2}}w^{\dag}_n(x) \in \rmL^2(\R_+)$ by the Parseval identity of Mellin transform, that is $w^{\dag}_n \in \rmL^2(p_{-a})$. This proves the $\Lnu$-integrability of $\frakm^{\dag}_n$ around 0. On the other hand, since $\M_{m^{\uparrow}}(u)=W_{\phi^{\uparrow}}(u)=\frac{W_{\phi}(u+\theta)}{W_{\phi}(1+\theta)}$, we have \[\M_{p_{\theta}\frakm}(u)=\M_{\frakm}(u+\theta)=\frac{\Gamma(u)}{\Gamma(u+\theta)\Gamma(1-\theta)}W_{\phi}(u+\theta)=C \M_{B(1,\theta)}(u)\M_{m^{\uparrow}}(u),\]
where $C=\frac{W_{\phi}(1+\theta)}{\Gamma(1-\theta)\Gamma(1+\theta)}$ and $B(1,\theta)$ is a Beta distribution of parameter $(1,\theta)$. Hence by the formula for the density of product of random variables, we have, for $x$ large enough such that $m^{\uparrow}$ is non-increasing on $(x,\infty)$,
\begin{eqnarray*}
\frac{1}{C}\frakm(x)p_{\theta}(x)&=& \int_x^{\infty}m^{\uparrow}(y)\left(1-\frac{x}{y}\right)^{\theta-1}\frac1y dy = \int_x^{\infty}y^{-\theta}m^{\uparrow}(y)(y-x)^{\theta-1}dy \\
& \geq & \int_x^{x+1}y^{-\theta}m^{\uparrow}(y)(y-x)^{\theta-1}dy \geq (x+1)^{-\theta}m^{\uparrow}(x+1) \geq C_{\psi}x^{-\theta}m^{\uparrow}(x)
\end{eqnarray*}
for some $C_{\psi}>0$ by \cite[Theorem 5.5 (1)]{PS_spectral}. Combine the above relations together, we have, for $x$ large enough,
\[\frac{m^{\uparrow}(x)}{x^{2\theta}\frakm(x)} \leq \frac{1}{C C_{\psi}}.\]
Now denoting $m^{\uparrow}_n = \frac{\Rcal^{(n)}m^{\uparrow}}{m^{\uparrow}}$, which is in $\rmL^2(m^{\uparrow})$ by \cite[Theorem 8.1]{PS_spectral}, then we have $(\frakm^{\dag}_n(x))^2\frakm(x) = (m^{\uparrow}_n(x))^2 m^{\uparrow}(x)\frac{m^{\uparrow}(x)}{x^{2\theta}\frakm(x)} \leq \frac{1}{C C_{\psi}} (m^{\uparrow}_n(x))^2 m^{\uparrow}(x)$ and is integrable around $\infty$. Hence $\frakm^{\dag}_n\in \Lnu$ for all $n\in\mathbb{N}$. Furthermore, again by \cite[Theorem 8.1]{PS_spectral}, $m^{\uparrow}_n$ is the co-eigenfunction for $P^{\uparrow}_t$ with eigenvalue $e^{-nt}$. Hence we have, for any $n\in\mathbb{N}$,
\begin{eqnarray*}
\left\langle P^{\dag}_tf,\frakm^{\dag}_n\right\rangle_{\frakm} &=& e^{-\theta t}\left\langle p_{\theta}P^{\uparrow}_t \frac{f}{p_{\theta}},\frac{\Rcal^{(n)}m^{\uparrow}}{p_{\theta}\frakm}\right\rangle_{\frakm} = e^{-\theta t}\left\langle P^{\uparrow}_t \frac{f}{p_{\theta}},m^{\uparrow}_n\right\rangle_{m^{\uparrow}} \\
&=& e^{-(n+\theta) t}\left\langle \frac{f}{p_{\theta}},m^{\uparrow}_n\right\rangle_{m^{\uparrow}}=e^{-(n+\theta) t}\left\langle f,\frac{m^{\uparrow}_nm^{\uparrow}}{p_{\theta}\frakm}\right\rangle_{\frakm}=e^{-(n+\theta) t}\left\langle f,\frakm^{\dag}_n \right\rangle_{\frakm}.
\end{eqnarray*}
Therefore $\frakm^{\dag}_n$ is a co-eigenfunction for $P^{\dag}_t$ with eigenvalue $e^{-(n+\theta)t}$. On the other hand, any solution $f$ of the equation $\hatIp f = \Lag_n^{\dag}$ shall satisfy the relation
\[\frac{\Gamma(1-\theta)}{W_{\phi}(1+\theta)}m(x)\Lag^{\dag}_n(x) \eqae \int_0^{\infty}y^{-1}f(xy)\frakm(xy)\iota\left(\frac1y\right)dy.\]
Hence taking Mellin transform on both sides and after some careful computations, we have
\[\M_{\frakm f}(u)=\frac{(-1)^n}{n!}\frac{\Gamma(u-\theta)}{\Gamma(u-\theta-n)}\frac{W_{\phi}(u)}{W_{\phi}(1+\theta)}=\M_{w^{\dag}_n}(u).\]
Therefore we see that $\frakm^{\dag}_n$ is a solution of $\hatIp f = \Lag_n^{\dag}$ by injectivity of the Mellin transform, and the uniqueness of this solution is due to the one-to-one property of $\hatIp$. Hence the biorthogonality of $(\Poly_n^{\dag},\frakm_n^{\dag})_{n\geq 0}$ follows by a similar argument as \eqref{eq:biortho}. This completes the proof.
\subsubsection{Proof of Theorem~\ref{thm:spectral}\eqref{it:spec}}
First, take any $f\in Ran(\Ip)$ with $\Ip g = f$ for some $g\in\Lg$, then by the intertwining relation \eqref{eq:intertwin_refl_ss} and the spectral expansion for $Q_t$, see \eqref{eq:spectral_Q}, we have
\begin{align*}
P_tf(x)=P_t\Ip g(x)=\Ip Q_tg(x)=\Ip \sum_{n\geq 0}e^{-nt}\frakc_n(-\theta)\left\langle g,\Lag_n  \right\rangle_{m}\Lag_n(x)=\sum_{n\geq 0}e^{-nt}\left\langle g,\Lag_n  \right\rangle_{m}\Poly_n(x),
\end{align*}
where the last identity is justified by the fact that $\Ip \in \B(\Lg,\Lnu)$, the Bessel property of $\left(\frakc^{-\frac{1}{2}}_n(-\theta)\Poly_n\right)_{n\geq 0}$ combined with the fact that the sequence $\left(\sqrt{\frakc_n(-\theta)}e^{-nt}\left\langle g,\Lag_n  \right\rangle_{m}\right)_{n\geq 0} \in \ell^2$ since $\left(\left\langle g,\Lag_n  \right\rangle_{m}\right)_{n\geq 0}\in \ell^2$. 
Moreover, recalling that $\hatIp \frakm_n = \Lag_n$, we see that $\left\langle g,\Lag_n\right\rangle_{m}=\left\langle \Ip g,\frakm_n\right\rangle_{\frakm}=\left\langle f,\frakm_n\right\rangle_{\frakm}$, hence this proves \eqref{eq:spectral_P} for all $(\psi,f)\in\D^{\checkmark}(\Ip)$.
Now let us define the spectral operator $S_t$,  $t\geq 0$, by
\begin{equation}
S_tf(x)=\sum_{n=0}^{\infty}e^{-nt}\left\langle f,\frakm_n\right\rangle_{\frakm}\Poly_n(x).
\end{equation}
We first note that under the condition $\D^{P}(\frakm)$,
\begin{equation*}
\sqrt{\frakc_n(-\theta)}e^{-nt}\left\langle f,\frakm_n\right\rangle_{\frakm} \leq e^{-nt}\|f\|_{\frakm}\left\|\frakm_n\right\|_{\frakm}=O\left(n^{\frac{\theta}{2}}e^{(-t+\epsilon)n}\right).
\end{equation*}
Hence $(\sqrt{\frakc_n(-\theta)}e^{-nt}\left\langle f,\frakm_n\right\rangle_{\frakm})_{n\geq 0} \in \ell^2$. By the Bessel property of the sequence $\left(\frakc^{-\frac{1}{2}}_n(-\theta)\Poly_n\right)_{n\geq 0}$, we get that $S_tf(x)\in \Lnu$ for $(\psi,f)\in \D^{\checkmark}(\Ip) \cup \D^{\N_P}(\frakm)$. Our next aim is to show $P_tf(x)=S_tf(x)$ under the conditions $\D^{\N_P}(\frakm) \backslash \D^{\checkmark}(\Ip)$. Since $Ran(\Ip)$ is dense in  $\Lnu$, for any $f\in \Lnu$, there exists a sequence $(g_m)_{m\geq 0} \in \Lg$ such that $\lim_{m\rightarrow\infty}\Ip g_m =f$ in $\Lnu$. Hence we have from the previous part that
\begin{equation*}
P_t\Ip g_m(x)=\sum_{n=0}^{\infty}c_{n,t}(\Ip g_m)\frakc^{-\frac{1}{2}}_n(-\theta)\Poly_n(x),
\end{equation*}
where the constants $c_{n,t}$ are defined by $c_{n,t}(f)=\sqrt{\frakc_n(-\theta)}e^{-nt}\left\langle f,\frakm_n \right\rangle_{\frakm}$ for $f\in\Lnu$. Now let us define operator $\mathcal{S}:\ell^2 \rightarrow \Lnu$ by, for any $(c_n)_{n\geq 0}\in \ell^2$,
\begin{equation} \label{eq:defn_S}
\mathcal{S}((c_n))=\sum_{n=0}^{\infty}c_n \frakc^{-\frac{1}{2}}_n(-\theta)\Poly_n.
\end{equation}
Then by \cite[(2.5)]{PS_spectral}, $\mathcal{S}$ is a bounded operator with operator norm $|||\mathcal{S}|||$ and
\begin{equation*}
\|P_t\Ip g_m-S_tf\|_{\frakm}^2 = \|\mathcal{S}(c_{n,t}(\Ip g_m-f))\|_{\frakm}^2\leq |||\mathcal{S}||| \sum_{n=0}^{\infty}c_{n,t}^2(\Ip g_m-f)\leq C_t\|\Ip g_m-f\|_{\frakm}^2
\end{equation*}
for some constant $0<C_t<\infty$. Hence $\lim_{m\rightarrow\infty}P_t\Ip g_m=S_tf$. However, since $P_t$ is a contraction, we conclude that $P_tf=S_tf$ under $\D^{\N_P}(\frakm)$. The spectral expansion of $P^{\dag}_tf$ for $(\psi,f)\in \D^{\checkmark}(\Ip)$ can be proved similarly using the spectral expansion of $Q^{\dag}_tf$ in \eqref{eq:spectral_Q_dag}, the intertwining between $P^{\dag}$ and $Q^{\dag}$, and the properties of $\Poly^{\dag}_n$ as well as $\frakm^{\dag}_n$. Finally, for $(\psi,f)\in \D^{\N_P}(\frakm)$, we have $\psi_{\uparrow}\in \N_P\cap \N_{\uparrow}$ and therefore by \cite[Theorem 1.11]{PS_spectral}, for all $f\in \rmL^2(m^{\uparrow})$,
\[P^{\uparrow}_tf = \sum_{n=0}^{\infty}e^{-nt}\left\langle f,m^{\uparrow}_n\right\rangle_{m^{\uparrow}} \Poly^{\psi_{\uparrow}}_n.\]
Hence
\begin{equation*}
P^{\dag}_tf = e^{-\theta t}p_{\theta}P^{\uparrow}_t\left(\frac{f}{p_{\theta}}\right)= \sum_{n=0}^{\infty}e^{-(n+\theta)t}\left\langle \frac{f}{p_{\theta}},m^{\uparrow}_n\right\rangle_{m^{\uparrow}} \Poly^{\dag}_n=\sum_{n=0}^{\infty}e^{-(n+\theta)t}\left\langle f,\frakm^{\dag}_n\right\rangle_{\frakm} \Poly^{\dag}_n.
\end{equation*}
This completes the proof of Theorem~\ref{thm:spectral}.

\subsubsection{Proof of Corollary~\ref{thm:convergence}}
For any $\psi\in \N_P\cap \N_{\checkmark}$ and assuming $\overline{\overline{\Pi}}(0+)<\infty $, since by Theorem~\ref{thm:spectral}, $\left(\frakc^{-\frac{1}{2}}_n(-\theta)\Poly_n\right)_{n\geq 0}$ and $(\sqrt{\frakc_n(\frakb)}\frakm_n)_{n\geq 0}$ are both Bessel sequences in $\Lnu$ with bound 1, we have, for $t>T_{\frakb}=\frac12\ln\left(\frac{\frakb+2}{2-\theta}\right)$,
\begin{align*}
\|P_t f-\frakm f\|_{\frakm}^2&= \|\mathcal{S}(c_{n,t}(f))\|_{\frakm}^2 \leq \sum_{n=1}^{\infty} \frac{\frakc_n(-\theta)}{\frakc_n(\frakb)}\left|\left\langle P_tf,\sqrt{\frakc_n(\frakb)}\frakm_n \right\rangle_{\frakm}\right|^2 \\
&= e^{-2t}\sum_{n=1}^{\infty} \frac{e^{-2(n-1)t}\frakc_n(-\theta)}{\frakc_n(\frakb)}\left|\left\langle f,\sqrt{\frakc_n(\frakb)}\frakm_n \right\rangle_{\frakm}\right|^2\\
&=\frac{e^{-2t}\frakc_1(-\theta)}{\frakc_1(\frakb)}\sum_{n=1}^{\infty} \frac{e^{-2(n-1)t}\frakc_1(\frakb)\frakc_n(-\theta)}{\frakc_n(\frakb)\frakc_1(-\theta)}\left|\left\langle f-\frakm f,\sqrt{\frakc_n(\frakb)}\frakm_n \right\rangle_{\frakm}\right|^2\\
&\leq \frac{\frakb+1}{1-\theta}e^{-2t}\sum_{n=1}^{\infty} \left|\left\langle f-\frakm f,\sqrt{\frakc_n(\frakb)}\frakm_n \right\rangle_{\frakm}\right|^2\\
&\leq \frac{\frakb+1}{1-\theta}e^{-2t}\|f-\frakm f\|_{\frakm}^2,
\end{align*}
where we used the fact that by the Stirling approximation, $\frac{e^{-2(n-1)t}\frakc_1(\frakb)\frakc_n(-\theta)}{\frakc_n(\frakb)\frakc_1(-\theta)}\leq 1$ for all $t>T_{\frakb}$. On the other hand, for $t\leq T_{\frakb}$, $ \frac{\frakb+1}{1-\theta}e^{-2t} \geq \frac{\frakb+1}{\frakb+2}\frac{2-\theta}{1-\theta}\geq 1$ since $\frakb\geq 0 > -\theta$. Invoking that $P_t$ is a contraction, this concludes the proof of this corollary.

\bibliographystyle{plain}

\end{document}